\numberwithin{equation}{section}
\theoremstyle{plain}
\newtheorem{thm}{Theorem}[section]
\newtheorem{lemma}[thm]{Lemma} 
\newtheorem{prop}[thm]{Proposition}
\newtheorem{hyp}[thm]{Hypotheses}
\theoremstyle{remark}
\newtheorem{remark}[thm]{Remark}
\theoremstyle{definition}
\newtheorem{defi}[thm]{Definition}
\newtheorem{example}[thm]{Example}
\newcommand\eps{\epsilon}
\newcommand{\la}{\lambda}
\newcommand\Ic{{\mathcal{I}}}
\newcommand\Bc{{\mathcal{B}}}
\newcommand\Fc{{\mathcal{F}}}
\newcommand\Hc{{\mathcal{H}}}
\newcommand\Kc{{\mathcal{K}}}
\newcommand\Lc{{\mathcal{L}}}
\newcommand\Nc{{\mathcal{N}}}
\newcommand\Reals{{\mathbb R}}
\newcommand\Complex{{\mathbb C}}
\newcommand\Nats{{\mathbb N}}
\newcommand\Z{{\mathbb Z}}
\newcommand\BH{B(\Hc)}
\newcommand\BK{B(\Kc)}
\newcommand{\Hcal}{\mathcal{H}}
\newcommand\Tr{{\mathrm{Tr}}}
\newcommand{\im}{\text{\rm Im}}
\newcommand{\re}{\text{\rm Re}}
\newcommand{\ii}{\text{\rm i}}
\newcommand{\sign}{\text{\rm sign}}
\newcommand\mut{{\tilde{\mu}}}
\newcommand\psit{{\tilde{\psi}}}
\newcommand\diag{\text{\rm diag}}
\newcommand\restrict{{\upharpoonright}}
\newcommand\Ict{{\widetilde\Ic}}
\begin{document}

\title{Perturbation formulas for traces on normed ideals}

\author[Dykema]{Ken Dykema$^{1}$}
\address{K.D., Department of Mathematics, Texas A\&M University,
College Station, TX 77843-3368, USA}
\email{kdykema@math.tamu.edu}

\author[Skripka]{Anna Skripka$^2$}
\address{A.S., Department of Mathematics and Statistics,
	University of New Mexico, 400 Yale Blvd NE, MSC01 1115, Albuquerque, NM 87131, USA}
\email{skripka@math.unm.edu}

\thanks{\footnotesize ${}^1$Research supported in part by NSF grant DMS--1202660.
${}^2$Research supported in part by NSF grant DMS--1249186.}
\subjclass[2000]{Primary 47B10, secondary 47A55, 47L20}

\keywords{Dixmier trace, spectral shift}


\begin{abstract}
We prove perturbation results for traces on normed ideals in semifinite von Neumann algebra factors.
This includes the case of Dixmier traces.
In particular, we establish existence of spectral shift measures
with initial operators being dissipative or bounded,
and show that these measures can have singular components in the case of Dixmier traces.
We also establish a linearization formula for a Dixmier trace applied to perturbed operator functions, a result that does not typically hold for normal traces.
\end{abstract}

\maketitle

\section{Introduction}


The goal of this paper is to extend important results of perturbation theory for ideals with normal traces to more general operator ideals, including Marcinkiewicz ideals endowed with Dixmier traces, and obtain new results that are distinctive of singular traces. In particular, we establish existence of spectral shift measures, which are not always absolutely continuous. 
We recall that the spectral shift measures originate from research in physics \cite{Lifshits} (see also~\cite{Y}
and \cite{RPKF}); they have been applied in perturbation theory of Schr\"{o}dinger operators and in noncommutative geometry in the study of spectral flow \cite{sf}. Existence of absolutely continuous spectral shift measures linked to normal traces was established in \cites{Krein,Kreinunitary,CP,AzamovIE} and of second order spectral shift measures in \cites{Koplienko,Neidhardt,DS,PS}.
Singular traces are important in classical and noncommutative geometry as well as in applications to physics
(see, e.g., \cites{Connes,LSZ,CS} and references cited therein), and we prove perturbation results for such traces.

Let $\BH$ denote the algebra of bounded linear operators acting on a separable Hilbert space $\Hcal$, let $\Ic=\mathcal{L}^{(1,\infty)}(\BH)$ denote the dual Macaev ideal and  $\Tr_\omega$ a Dixmier trace on it (see Section \ref{sec:prelims} for details).
For certain classes of
pairs $(H_0,V)$, with $H_0$ an operator in $\BH$ or an unbounded operator affiliated with $\BH$ and with (bounded) $V\in\Ic$,
we prove the trace formula
\begin{align}
\label{tf}
\Tr_\omega\big(f(H_0+V)-f(H_0)\big)=\int_\Omega f'(\la)\,\mu_{H_0,V}(d\la),
\end{align}
whenever $f$ is a sufficiently nice scalar function, for some finite, complex measure $\mu_{H_0,V}$
on an appropriate subset $\Omega$ of the complex plane (see Theorem \ref{mt}, Remarks \ref{bo} and \ref{nr}, and Theorem \ref{ui}).

By analogy with the case of a normal trace, we call the measure $\mu_{H_0,V}$ the (first order) spectral shift measure.
If $H_0$ and $H_0+V$ are closed, densely defined  and dissipative
(i.e., possibly unbounded with 
$\im\left<H_0\xi,\xi\right>\geq 0$ and $\im\left<(H_0+V)\xi,\xi\right>\geq 0$ for every $\xi$ in the domain of $H_0$), then $\Omega$ can be taken to be $\Reals$;
if $H_0$ and $H_0+V$ are contractive, then $\Omega$ can be taken to be $\mathbb{T}$,
while if $H_0$ and $H_0+V$ are bounded self--adjoint, then $\Omega$ can be taken to be a bounded subset of $\Reals$.
In these various cases, different classes of functions $f$ are allowed in~\eqref{tf}.
Note that the
verbatim analog of \eqref{tf} for  general dissipative or contractive operators in case of a normal trace has not appeared in the literature (to the best of the authors' knowledge), but
is obtained in this paper, as our techniques and results apply more generally, and in particular to normal traces.

Singularity of the trace $\Tr_\omega$ entails properties of the spectral shift measures that do not hold for normal traces.
We demonstrate that the measure $\mu_{H_0,V}$ can fail to be absolutely continuous (see Proposition \ref{dm}) and, moreover, any measure type is possible (see Theorem \ref{prop:order1example}), as distinct from the case of trace class (or noncommutative $L^1$) perturbations and normal trace. We also show that the spectral shift measures linked to the Dixmier trace degenerate to zero when their counterparts linked to the standard trace are defined (see Proposition \ref{triv}).

We prove linearization of the operator function inside the trace
\begin{equation}
\label{lin}
\Tr_\omega\big(f(H_0+V)-f(H_0)\big)=\Tr_\omega\big(f'(H_0)V\big),
\end{equation}
with $V\in\Ic$ (see Theorem \ref{imt}), which does not hold in general for a normal trace
$\Tr$, for when $f$ is a nonlinear function, the Taylor series expansion for
$\Tr\big(f(H_0+V)-f(H_0)\big)$, with $V$ in the trace class, contains higher order G\^{a}teaux derivatives
$\frac{d^n}{dt^n}f(H_0+tV)$.
Note that although, seeing~\eqref{lin}, it is tempting to write $\mu_{H_0,V}(d\la)=\Tr_\omega\big(E_{H_0}(d\la)V\big)$, in general,
we do not have this equality because $\mu_{H_0,V}$ is a countably additive measure, while the set function
$\Tr_\omega\big(E_{H_0}(\cdot)V\big)$
can fail to be countably additive (see Example \ref{ex}).

For $V\in\Ic^{1/2}$, we prove the second order trace formula
\begin{align}
\label{2tf}
\Tr_\omega\left(f(H_0+V)-f(H_0)-\frac{d}{dt}\bigg|_{t=0}f(H_0+tV)\right)=\int_\Omega f''(\la)\,\nu_{H_0,V}(d\la),
\end{align}
where $\nu_{H_0,V}$ is a finite measure determined by the operators $H_0$ and $V$ (see Theorem \ref{2mt}).
The measure $\nu_{H_0,V}$ can fail to be absolutely continuous (see Proposition \ref{2dm})
and it degenerates to zero when $V\in\Ic$ (see Proposition \ref{2triv}).
Furthermore, we prove (see Theorem \ref{2imt})
\begin{multline}
\label{2lin}
\Tr_\omega\left(f(H_0+V)-f(H_0)-\frac{d}{dt}\bigg|_{t=0}f(H_0+tV)\right)=
\frac12\,\Tr_\omega\left(\frac{d^2}{dt^2}\bigg|_{t=0}f(H_0+tV)\right),
\end{multline}
for $V\in\Ic^{1/2}$,
which is an analogue of~\eqref{lin}.

Singularity of the trace requires
development of a new approach to the spectral shift measures, which is explained at the beginning of Section \ref{sec3}. We prove existence of the first and second order spectral shift measures implicitly, which is closer in spirit to the proof of existence of the higher order \cites{PSS,PSS-circle} than the proof of existence of the lower order spectral shift measures for normal traces.

In fact, analogous results, by the same proofs, hold in the more general setting of Dixmier traces on Marcinkiewicz ideals of
$\sigma$--finite, semifinite von Neumann algebra factors
(see~\cite{GI} and ~\cite{CS}),
and our exposition accommodates this generalization.
Our hypotheses on the trace are quite general, and accommodate also the classical trace.

The organization of the rest of the paper is as follows.
Section~\ref{sec:prelims} contains preliminaries, and is divided into three subsections: \ref{subsec:ideals} on general ideals,
norms and traces, \ref{subsec:Dixmier} on Dixmier traces and \ref{subsec:dilation} on classical dilation theory done for
semifinite von Neumann algebras.
Section~\ref{sec3} contains proofs of the first order perturbation formulas.
Section~\ref{notac} contains some results and examples on spectral shift measures $\mu_{H_0,V}$ for singular traces, showing, in particular, that $\mu_{H_0,V}$ can be singular. Section~\ref{sec:2ndOrd} contains proofs of the second order perturbation formulas.

\section{Preliminaries}
\label{sec:prelims}

\subsection{On ideals, norms and traces}
\label{subsec:ideals}

Let $\Bc$ be a $\sigma$--finite, semifinite von Neumann algebra factor with fixed normal, faithful, semifinite trace $\tau$.
We will consider perturbation formulas for possibly unbounded operators affiliated with $\Bc$.
While $\Bc$, when represented normally on a Hilbert space $\Hc$, consists entirely of bounded operators,
a densely defined, closed, (possibly) unbounded operator $T$ is affiliated with $\Bc$ if and only if $T$ commutes
with all unitary operators in the commutant of $\Bc$;
equivalently, $T$ is affiliated with $\Bc$
if and only if in the polar decomposition $T=V|T|$, we have $V\in\Bc$ and all spectral projections of $|T|$
belong to $\Bc$;
since an unbounded operator affiliated with $\Bc$ can be identified in this way as the product of
a partial isometry $V\in\Bc$
with an integral $|T|=\int_{[0,\infty)}t E_{|T|}(dt)$ of a $\Bc$--valued spectral measure, the algebra $\widetilde{\Bc}$
of operators affiliated with $\Bc$ is independent of the Hilbert space on which $\Bc$ is represented.
In the case of $\Bc=\BH$, the affiliated operators are, of course,
just arbitrary closed, densely defined, possibly unbounded operators on $\Hc$.

In the case of $\Bc=\BH$, von Neumann characterized the ideals of $\Bc$ in terms of the set of sequences
$(s_n(A))_{n=1}^\infty$ of singular numbers
of elements $A$ of the ideals (see~\cite{Calkin} or~\cite{GK}).
In the case of $\Bc$ a type II$_\infty$ factor with fixed normal, faithful trace $\tau$,
the ideals of $\Bc$ (and, more generally, the sub--$\Bc,\Bc$--bimodules of the space of all $\tau$--measurable
operators affiliated to $\Bc$)
are classified in terms of generalized singular numbers, which go back to Murray and von Neumann~\cite{MvN}.
For $A\in\Bc$, the generalized singular numbers $\mu_t(A)\ge0$ are a right--continuous, nonincreasing function of
$t\in(0,\infty)$.
See~\cite{FK} for more on generalized singular numbers.
Let us use the symbol $\mu(A)$ to denote the function $t\mapsto\mu_t(A)$.
By~\cite{GI}, an ideal $\Ic$ of $\Bc$ is characterized by its so called {\em characteristic set}
\[
\mu(\Ic)=\{\mu(A)\mid A\in\Ic\}
\]
and the sets of functions so arising are precisely the dilation invariant, hereditary cones
in the set of bounded right--continuous, nonincreasing functions.

\begin{defi}\label{def:idealnorm}
Let $\Bc$ be a $\sigma$--finite, semifinite von Neumann algebra factor.
An ideal $\Ic$ of $\Bc$ is called a {\em normed ideal} if it is equipped with an {\em ideal norm}, namely,
a norm $\|\cdot\|_\Ic$ on $\Ic$ satisfying
\begin{enumerate}[(i)]
\item
$A\in\Bc$, $B\in\Ic$, $0\le A\le B$ implies $\|A\|_\Ic\le\|B\|_\Ic$,
\item
there is a constant $K>0$ such that
$\|B\|\leq K\|B\|_\Ic$
for all $B\in\Ic$,
\item
for all $A,C\in\Bc$ and all $B\in\Ic$ we have
\[
\|ABC\|_\Ic\leq\|A\|\,\|B\|_\Ic\,\|C\|.
\]
\end{enumerate}
Likewise, $\Ic$ is a {\em quasi--normed ideal} if it has an {\em ideal quasi--norm}, which is a quasi--norm
on $\Ic$ satisfying properties (i)--(iii).
\end{defi}

\begin{defi}\label{def:trace}
A {\em trace} on an ideal $\Ic$ of $\Bc$ is a linear functional $\tau_\Ic:\Ic\to\mathbb{C}$ such that
\[
\tau_\Ic(AB)=\tau_\Ic(BA),\qquad(A\in\Ic,\,B\in\Bc).
\]
We say the trace is {\em positive} if
\[
A\in\Ic,\,A\ge0\quad\implies\quad\tau_\Ic(A)\ge0.
\]
If $\Ic$ has an ideal norm $\|\cdot\|_\Ic$,
we will say that a trace $\tau_\Ic$ is {\em $\|\cdot\|_\Ic$-- bounded} if there is a constant $M>0$ such that
\[
|\tau_\Ic(A)|\le M\|A\|_\Ic,\qquad(A\in\Ic).
\]
Note that the infimum of such constants $M$ equals $\|\tau_\Ic\|_{\Ic^*}$.
\end{defi}

Note that the purely algebraic
question of existence of traces on a given ideal $\Ic$ is solved in the discrete case (i.e., $\Bc=\BH$)
in~\cite{DFWW}
and in the continuous (i.e., $II_\infty$--factor) case in~\cite{DK}.
See~\cite{SZ} for results concerning existence of various sorts of traces on symmetrically normed ideals.

Our first order perturbation results will apply whenever we have a normed ideal $\Ic$
and positive trace $\tau_\Ic$ on it that is bounded with respect to the ideal norm.
As we will shortly see, Dixmier traces on Marcinkiewcz ideals provide examples of these.

By the characterizations of ideals in terms of (generalized) singular numbers, for every ideal $\Ic$ of $\Bc$ and
every $\alpha>0$, we have the ideal
\[
\Ic^\alpha=\{A\in\Bc\mid |A|^{1/\alpha}\in\Ic\}
\]
of $\Bc$
and by well known inequalities involving (generalized) singular numbers, we have that $\Ic^n$ for $n\in\Nats$
is spanned by the $n$--fold products of elements from $\Ic$.
Note that whenever we have a positive trace on $\Ic$,
we have the usual Cauchy-Schwarz inequality:
\begin{equation}\label{eq:CS}
|\tau_\Ic(AB)|\le\big(\tau_\Ic(|A|^2)\big)^{1/2}\big(\tau_\Ic(|B|^2)\big)^{1/2},\qquad(A,B\in\Ic^{1/2}).
\end{equation}

For second order perturbation results, we will ask that $\Ic^{1/2}$ be a normed ideal equipped with an ideal norm
$\|\cdot\|_{\Ic^{1/2}}$ such that
\begin{equation}\label{eq:I12}
\|AB\|_\Ic\le\|A\|_{\Ic^{1/2}}\|B\|_{\Ic^{1/2}},\qquad(A,B\in\Ic^{1/2}).
\end{equation}
Below we will give a criterion on Marcinkiewicz ideals that implies
the existence of such an ideal norm on $\Ic^{1/2}$.

\medskip
We conclude these preliminary remarks on ideals, norms and traces, with an easy result that will
be used in later proofs involving dilations of contractions and dissipative operators.

\begin{prop}\label{prop:extend}
Let $\Ic$ be a proper, nonzero ideal of a $\sigma$--finite, semifinite von Neumann algebra factor $\Bc$
and let $\Hc$ be a separable Hilbert space.
Let $\Nc$ be a $\sigma$--finite, semifinite von Neumann algebra factor containing
$\Bc$ as a corner, so that $\Bc$ is identified with $p\Nc p$ for a projection $p\in\Nc$.
\begin{enumerate}[(i)]
\item
There is an ideal $\Ict$ of $\Nc$ such that $\Ict\cap\Bc=\Ic$.
\item
If $\|\cdot\|_\Ic$ is an ideal norm on $\Ic$, then there is an ideal norm $\|\cdot\|_\Ict$
on $\Ict$ whose restriction to $\Ic$ equals $\|\cdot\|_\Ic$.
\item
If $\tau_\Ic$ is a trace on $\Ic$, then there is a trace $\tau_\Ict$ on $\Ict$ whose restriction to $\Ic$ is $\tau_\Ic$;
moreover, if $\tau_\Ic$ is positive, then $\tau_\Ict$ is positive, while if the hypothesis of~(ii) also holds and
if $\tau_\Ic$ is $\|\cdot\|_\Ic$--bounded,
then $\tau_\Ict$ is $\|\cdot\|_\Ict$--bounded.
\end{enumerate}
\end{prop}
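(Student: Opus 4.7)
The plan is to let $\Ict$ be the two-sided ideal of $\Nc$ generated by $\Ic$, namely $\Ict:=\mathrm{span}\{axb\st a,b\in\Nc,\,x\in\Ic\}$, and to extend the ideal norm and trace using the corner decomposition $\Bc=p\Nc p$ together with the factor structure of $\Nc$. The principal obstacle will be the construction and verification of the trace extension in part (iii).

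For part (i), the inclusion $\Ic\subseteq\Ict\cap\Bc$ is immediate. For the reverse inclusion, given $y\in\Ict\cap\Bc$ with a decomposition $y=\sum_k a_kx_kb_k$ ($a_k,b_k\in\Nc$, $x_k\in\Ic$), using $y=pyp$ and $x_k=px_kp$ (since $\Ic\subseteq p\Nc p$) yields
\[
y=pyp=\sum_k (pa_kp)\,x_k\,(pb_kp)\in\Bc\cdot\Ic\cdot\Bc\subseteq\Ic.
\]

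For part (ii), I define on $\Ict$ the projective-type expression $\|y\|_\Ict:=\inf\{\sum_k\|a_k\|\,\|x_k\|_\Ic\,\|b_k\|:y=\sum_k a_kx_kb_k\}$. Properties (ii) and (iii) of Definition~\ref{def:idealnorm} are immediate, and the compression argument of (i) gives $\|y\|_\Ict=\|y\|_\Ic$ for $y\in\Ic$. For hereditariness (property (i)), I observe that $\|\cdot\|_\Ict$ is invariant under $\Nc$-unitary conjugation, hence on positive operators depends only on the $\Nc$-generalized singular value function $\mu_t^\Nc$ (as positive operators in the $\sigma$-finite semifinite factor $\Nc$ with identical singular value functions are unitarily equivalent). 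For $0\le A\le B$, the Douglas factorization gives $A=B^{1/2}C^*CB^{1/2}$ with $\|C\|\le 1$; setting $T=C^*C$ and $X=T^{1/2}B^{1/2}$, the identity $\mu^\Nc(X^*X)=\mu^\Nc(XX^*)$ yields $\mu^\Nc(A)=\mu^\Nc(T^{1/2}BT^{1/2})$, so
\[
\|A\|_\Ict=\|T^{1/2}BT^{1/2}\|_\Ict\le\|T^{1/2}\|^2\|B\|_\Ict\le\|B\|_\Ict
\]
by property (iii).

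For part (iii), the trace extension is the main obstacle. Using $\sigma$-finiteness and comparability of projections in the factor $\Nc$, I choose an at most countable family $\{v_i\}_{i\in I}\subseteq\Nc$ of partial isometries with $v_iv_i^*=p$ and $\{v_i^*v_i\}$ mutually orthogonal projections summing to $1_\Nc$, which realizes $\Nc\cong M_I(\Bc)$ as a matrix algebra over the corner. For $y\in\Ict$ each compression $v_iyv_j^*$ lies in $p\Nc p\cap\Ict=\Ic$ by part (i), and I define
\[
\tau_\Ict(y):=\sum_i\tau_\Ic(v_iyv_i^*),
\]
first for positive $y$ (where summability must be established) and then by linearity. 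Positivity and the extension property $\tau_\Ict\restrict\Ic=\tau_\Ic$ (arranging $v_1=p$ in the family) are immediate, while independence of $\{v_i\}$ and the trace property $\tau_\Ict(ab)=\tau_\Ict(ba)$ follow from standard cyclicity and partial-isometry manipulations applied to finite subsums. The most delicate step is establishing summability together with $|\tau_\Ict(y)|\le M\|y\|_\Ict$ when $\tau_\Ic$ is $\|\cdot\|_\Ic$-bounded; this is handled by rewriting each partial sum $\sum_{i\in F}\tau_\Ic(v_iyv_i^*)$ as $\tau_\Ic$ of a single element of $\Ic$ whose $\|\cdot\|_\Ic$-norm is controlled by $\|y\|_\Ict$ via property (iii) in the matrix representation.
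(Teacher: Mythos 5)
Your parts (i) and (ii) reach correct conclusions, by a route different from the paper's: the paper takes $\Ict$ to be the ideal of $\Nc$ whose characteristic set is $\mu(\Ic)$, while you take the ideal generated by $\Ic$ together with a projective-type norm, and your compression argument $y=\sum_k(pa_kp)x_k(pb_kp)$ correctly gives $\Ict\cap\Bc=\Ic$ and the restriction property of the norm. One justification in (ii) is false as stated, though: two positive operators in a semifinite factor with the same generalized singular value function need not be unitarily equivalent (e.g.\ $A$ and $vAv^*$ for a non-unitary isometry $v$ have different kernels), so unitary invariance of $\|\cdot\|_\Ict$ does not show that it depends only on $\mu^\Nc$ on positive operators. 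The flaw is harmless because the detour is unnecessary: if $0\le A\le B$, Douglas factorization gives $A^{1/2}=CB^{1/2}$ with $\|C\|\le1$, hence $A=CBC^*$, and property (iii) of your norm yields $\|A\|_\Ict\le\|B\|_\Ict$ directly.

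Part (iii), however, has a genuine gap: the formula $\tau_\Ict(y)=\sum_i\tau_\Ic(v_iyv_i^*)$ depends on the choice of the matrix units and does not define a trace when $\tau_\Ic$ is singular; summing over infinitely many diagonal blocks is exactly the kind of countable additivity that a Dixmier trace lacks. Concretely, take $\Bc=B(\Hc)$, $\Ic=\Lc^{(1,\infty)}$, $\tau_\Ic=\Tr_\omega$, $\Nc=B(\Hc\otimes\ell^2(\Nats))$ with $p$ the first block. Let $y=D\oplus0\oplus0\oplus\cdots$ where $D=\diag(1,\tfrac12,\tfrac13,\ldots)$; then your formula gives $\Tr_\omega(D)=1$. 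Now write $\Nats$ as the disjoint union of the sets $S_i=\{2^{j-1}(2i-1):j\in\Nats\}$, so that $\sum_{n\in S_i}\tfrac1n=\tfrac2{2i-1}<\infty$, and let $z=\bigoplus_i z_i$, where $z_i$ is a positive diagonal operator on $\Hc$ with nonzero eigenvalues $\{1/n:n\in S_i\}$ (each once) and infinite-dimensional kernel. Each $z_i$ is trace class, so your formula gives $\tau_\Ict(z)=0$; but $y$ and $z$ have identical eigenvalue lists and infinite-dimensional kernels, so $z=uyu^*$ for a unitary $u\in\Nc$, and $z\in\Ict$ since $y\in\Ic$. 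Traciality would force equal values; equivalently, replacing $\{v_i\}$ by the equally admissible family $\{v_iu^*\}$ changes the value, so the definition is not independent of the choice of $\{v_i\}$, and the advertised ``standard cyclicity'' verification cannot work, since $\tau_\Ic$ commutes only with elements of $\Bc$, not of $\Nc$. (A smaller point: $v_1=p$ cannot be arranged when $1_\Nc-p$ is a nonzero finite projection.) The paper avoids any infinite summation: because $\Ic$ is proper and nonzero, $\Bc$ is an infinite algebra, so $p$ is an infinite projection in the $\sigma$-finite factor $\Nc$, hence $p\sim 1_\Nc$; a single isometry $v\in\Nc$ with $vv^*=p$ makes $x\mapsto vxv^*$ a $*$-isomorphism of $\Nc$ onto $\Bc$ carrying $\Ict$ onto $\Ic$, and one sets $\tau_\Ict(x)=\tau_\Ic(vxv^*)$ and $\|x\|_\Ict=\|vxv^*\|_\Ic$, for which traciality, positivity, boundedness and the restriction properties are immediate.
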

\begin{proof}
Let $\Ict$ be the ideal of $\Nc$ whose characteristic set is $\mu(\Ic)$.
Then $\Ict\cap\Bc=\Ic$.

Since $\Bc$ has a proper, nonzero ideal, it is an infinite von Neumann algebra.
Consequently, $p$ is an infinite projection in $\Nc$.
Since $\Nc$ is a $\sigma$--finite but infinite factor, there is an isometry $v\in\Nc$ whose range is $p$.
Thus $x\mapsto vxv^*$ is a $*$--isomorphism from $\Nc$ onto $\Bc$.
Moreover, since for any $x\in\Nc$ we have $\mu(x)=\mu(vxv^*)$, we have $v\Ict v^*=\Ic$.
In case~(ii) we set $\|x\|_\Ict=\|vxv^*\|_\Ic$ while in case~(iii) we let $\tau_\Ict(x)=\tau_\Ic(vxv^*)$.
Now the assertions (ii) and (iii) are easily verified.
\end{proof}

\subsection{On Dixmier traces}
\label{subsec:Dixmier}

The trace introduced by Dixmier~\cite{Dix} is a singular
trace (i.e., non--normal in the technical sense of the term) on $\BH$.
The natural domain of definition of Dixmier's trace is
the dual Macaev ideal $\Ic=\Lc^{(1,\infty)}$ of $\BH$, which is the set of operators $A\in\BH$ such that
\[
\|A\|_\Ic:=\sup_{n\in\Nats}\frac1{\log(n+1)}\sum_{k=1}^ns_k(A)<\infty.
\]
If $\omega$ is a dilation invariant state on $\ell^\infty(\Nats)$, then $\Tr_\omega:\Ic\to\mathbb{C}$
defined by
\begin{equation}\label{eq:trB}
\Tr_\omega(A)=\omega\left(\frac1{\log(n+1)}\sum_{k=1}^ns_k(A)\right)
\end{equation}
for $A\ge0$
is a positive trace on $\Ic$ that is bounded with respect to the ideal norm.
Note that $\Tr_\omega$ vanishes on all finite rank operators.

\medskip
More generally, we will consider Dixmier traces on Marcinkiewicz (also called Lorentz) ideals associated to $\sigma$--finite, semifinite von Neumann algebra factors.
(See~\cite{GI}, \cite{CS} and references therein, and note that we are concerned only with the Dixmier traces supported
at $\infty$, as described in~\cite{CS}.)
Let $\psi$ be
a concave function satisfying
\[
\lim_{t\rightarrow 0^+}\psi(t)=0, \qquad \lim_{t\rightarrow \infty}\psi(t)=\infty.
\]
Then the Marcinkiewicz ideal
$\Ic=\Ic_\psi$ of $\Bc$ and its ideal norm $\|\cdot\|_\Ic$ are defined by
\begin{equation}\label{eq:AMnorm}
\Ic=\bigg\{A\in\Bc\,\bigg|\,\|A\|_\Ic:=\sup_{t>0}\frac1{\psi(t)}\int_0^t\mu_s(A)\,ds<\infty\bigg\}.
\end{equation}

\begin{remark}
The discrete case, namely
Marcinkiewicz ideals of $\Bc=\BH$, can formally be included in the above formalism that applies in the continuous case.
Given a function $\psi:\Nats\to(0,\infty)$ satisfying $\psi(n)+\psi(n+2)\le2\psi(n+1)$ and $\lim_{n\to\infty}\psi(n)=\infty$,
by first adding a constant to $\psi$, if necessary, so that we may define $\psit(0)=0$ and still have
$\psit(0)+\psit(2)\le2\psit(1)$,
we may then
extend $\psi$ to a concave function $\psit:[0,\infty)\to[0,\infty)$ by
piecewise linear interpolations.
Now, as the generalized singular numbers of elements of $\BH$ are constant on intervals $[n-1,n)$ for $n\in\Nats$
and are zero on $[0,1)$, we find that the definition of the norm in~\eqref{eq:AMnorm}
is equivalent to
\[
\|A\|_\Ic=\sup_{n\in\Nats}\frac1{\psi(n)}\sum_{k=1}^ns_k(A).
\]
\end{remark}

\begin{prop}\label{prop:Malpha}
Let $\Ic=\Ic_\psi$ be the Marcinkiewicz ideal of $\Bc$ described by~\eqref{eq:AMnorm}.
For $p>0$, the ideal $\Ic^{1/p}$ is equipped with the ideal quasi--norm
\begin{equation}\label{eq:Malphanorm}
\|B\|_{\Ic^{1/p}}=\left(\|\,|B|^p\,\|_\Ic\right)^{1/p},
\end{equation}
which is a norm when $p\ge1$ and a $p$--quasi--norm when $0<p<1$.
If $p>1$, then letting $q$ be such that $\frac1p+\frac1q=1$, we have the H\"older--like inequality
\begin{equation}\label{eq:IHolder}
\|A B\|_{\Ic}\le\|A\|_{\Ic^{1/p}}\,\|B\|_{\Ic^{1/q}}\qquad(A\in\Ic^{1/p},\,B\in\Ic^{1/q}).
\end{equation}
\end{prop}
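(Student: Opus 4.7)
The plan is to verify axioms (i)--(iii) of Definition~\ref{def:idealnorm} and the (quasi-)triangle inequality directly, and then deduce~\eqref{eq:IHolder} from classical submajorization inequalities for generalized singular numbers. The key reformulation is
\[
\|B\|_{\Ic^{1/p}}=\sup_{t>0}\frac1{\psi(t)^{1/p}}\left(\int_0^t\mu_s(B)^p\,ds\right)^{1/p},
\]
which follows from the identity $\mu_s(|B|^p)=\mu_s(B)^p$ and reduces the problem to handling $L^p$-quantities (in the variable $s$) and then a supremum over $t$. Axioms~(i) and~(iii) are immediate from the monotonicity $0\le A\le B\Rightarrow\mu_s(A)\le\mu_s(B)$, the bound $\mu_s(ABC)\le\|A\|\|C\|\mu_s(B)$, and the corresponding axioms for $\|\cdot\|_\Ic$; axiom~(ii) follows by applying the axiom~(ii) for $\|\cdot\|_\Ic$ to $|B|^p$ and taking $p$-th roots.

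For the triangle inequality when $p\ge1$, I would use two standard ingredients from~\cite{FK}: (a)~the submajorization $\int_0^t\mu_s(A+B)\,ds\le\int_0^t(\mu_s(A)+\mu_s(B))\,ds$ for all $t>0$, and (b)~the Hardy--Littlewood--P\'olya principle that $f\prec_w g$ for nonnegative nonincreasing $f,g$ together with a convex nondecreasing $\Phi$ with $\Phi(0)=0$ yields $\Phi\circ f\prec_w\Phi\circ g$. Applying (b) with $\Phi(x)=x^p$ to the relation in (a) gives $\int_0^t\mu_s(A+B)^p\,ds\le\int_0^t(\mu_s(A)+\mu_s(B))^p\,ds$, and then scalar Minkowski in $L^p(0,t)$ yields
\[
\left(\int_0^t\mu_s(A+B)^p\,ds\right)^{1/p}\le\left(\int_0^t\mu_s(A)^p\,ds\right)^{1/p}+\left(\int_0^t\mu_s(B)^p\,ds\right)^{1/p}.
\]
Dividing by $\psi(t)^{1/p}$ and taking $\sup_{t>0}$ gives subadditivity of $\|\cdot\|_{\Ic^{1/p}}$.

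For $0<p<1$, I would invoke Rotfeld's inequality: for any concave $f:[0,\infty)\to[0,\infty)$ with $f(0)=0$,
\[
\int_0^t\mu_s(f(|A+B|))\,ds\le\int_0^t\bigl(\mu_s(f(|A|))+\mu_s(f(|B|))\bigr)\,ds.
\]
Applied with $f(x)=x^p$ this gives $\||A+B|^p\|_\Ic\le\||A|^p\|_\Ic+\||B|^p\|_\Ic$, which is precisely the $p$-quasi-norm inequality $\|A+B\|_{\Ic^{1/p}}^p\le\|A\|_{\Ic^{1/p}}^p+\|B\|_{\Ic^{1/p}}^p$.

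For~\eqref{eq:IHolder}, I would use the Hardy--Littlewood-type inequality $\int_0^t\mu_s(AB)\,ds\le\int_0^t\mu_s(A)\mu_s(B)\,ds$ (a standard result in~\cite{FK}) followed by scalar H\"older with exponents $p$ and $q$, so that
\[
\frac1{\psi(t)}\int_0^t\mu_s(AB)\,ds\le\frac1{\psi(t)^{1/p}}\left(\int_0^t\mu_s(A)^p\,ds\right)^{1/p}\frac1{\psi(t)^{1/q}}\left(\int_0^t\mu_s(B)^q\,ds\right)^{1/q},
\]
and taking $\sup_{t>0}$ delivers~\eqref{eq:IHolder}. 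The main obstacle here is purely bookkeeping: the needed inequalities for generalized singular numbers (submajorization, Rotfeld, Hardy--Littlewood for products) are all established in~\cite{FK}, and the proof is really an assembly of these operator-theoretic facts with the scalar Minkowski and H\"older inequalities applied pointwise in $t$ before the Marcinkiewicz supremum.
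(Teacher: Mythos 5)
Your proposal is correct and takes essentially the same route as the paper: the paper simply cites the Minkowski inequality for generalized singular numbers (\cite{F}, Cor.\ 4.4(i)), the Rotfeld-type inequality (\cite{FK}, Thm.\ 4.7(i)) for $0<p<1$, and the H\"older-type estimate (\cite{F}, Cor.\ 4.4(iii)), while you re-derive these same ingredients from submajorization plus the scalar Minkowski and H\"older inequalities before taking the Marcinkiewicz supremum. The extra unpacking is sound but does not constitute a genuinely different argument.
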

\begin{proof}
The fact that~\eqref{eq:Malphanorm} defines a norm when $p\ge1$
follows from the Minkowski inequality for generalized singular numbers, (\cite{F} Cor. 4.4(i)),
while if $0<p<1$, then from Thm.~4.7(i) of~\cite{FK} we have
\[
\|A+B\|_{\Ic^{1/p}}^p\le\|A\|_{\Ic^{1/p}}^p+\|B\|_{\Ic^{1/p}}^p,\qquad(A,B\in\Ic^{1/p}).
\]
Therefore, $\|\cdot\|_{\Ic^{1/p}}$ is a $p$--quasi--norm.

Now the conditions (i)--(iii) in Definition~\ref{def:idealnorm} follow for $\|\cdot\|_{\Ic^{1/p}}$
from well known properties of (generalized) singular numbers.
For the H\"older inequality~\eqref{eq:IHolder}, by Corollary 4.4(iii) of~\cite{F}, we have
\begin{align*}
\frac1{\psi(t)}\int_0^t\mu_s(AB)\,ds
&\le\left(\frac1{\psi(t)}\int_0^t\mu_s(A)^p\,ds\right)^{1/p}
 \left(\frac1{\psi(t)}\int_0^t\mu_s(B)^q\,ds\right)^{1/q} \\[1ex]
&\le\|A\|_{\Ic^{1/p}}\|B\|_{\Ic^{1/q}},
\end{align*}
so taking the supremum over all $t>0$ yields the desired inequality.
\end{proof}

We now describe Dixmier traces on Marcinkiewicz ideals.
By Theorem~2.2 of~\cite{CS} (see also~\cite{DPSSS} and \cite{GI}),
if
\[
\liminf_{t\to\infty}\frac{\psi(2t)}{\psi(t)}=1,
\]
then a positive, $\|\cdot\|_\Ic$--bounded trace $\tau_\Ic$ on $\Ic$ can be constructed analogously to~\eqref{eq:trB},
by, for $A\in\Ic$, $A\ge0$, taking $\tau_\Ic(A)$ to be a dilation invariant Banach limit on $L^\infty((0,\infty))$ evaluated at
\begin{equation}\label{eq:int}
\frac1{\psi(t)}\int_0^t\mu_s(B)\,ds
\end{equation}
as $t\to\infty$.
A trace constructed in this way is called a {\em Dixmier trace}.

\medskip
Some of our results will apply when $\tau_\Ic$ vanishes on $\Ic^\alpha$, for certain $\alpha>1$.
\begin{prop}
Consider a Marcinkiewicz ideal $\Ic=\Ic_\psi$ and
let $\tau_\Ic$ be a Dixmier trace on it.
Suppose that for some $0<\eps<1$ there is a constant $C$ such that
\begin{equation}\label{eq:KSScond}
\psi(s)\le Cs^\eps,\qquad(s\ge1).
\end{equation}
Then for all $\alpha>1/(1-\eps)$, we have $\tau_\Ic(\Ic^\alpha)=\{0\}$.
\end{prop}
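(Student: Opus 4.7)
The plan is to reduce to positive $A \in \Ic^\alpha$, extract a pointwise decay estimate on the generalized singular numbers $\mu_s(A)$ from the hypothesis~\eqref{eq:KSScond}, and then show that the Ces\`aro-type integral~\eqref{eq:int} defining the Dixmier trace is actually \emph{bounded} in $t$, so that dividing by $\psi(t) \to \infty$ before taking a Banach limit yields zero.

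First, by writing an arbitrary $A \in \Ic^\alpha$ as a linear combination of its four positive parts (each of which is again in $\Ic^\alpha$, since $\Ic^\alpha$ is an ideal and the construction $B \mapsto |B|^{1/\alpha}$ respects the order), it suffices by linearity of $\tau_\Ic$ to treat positive $A \in \Ic^\alpha$. For such $A$, $A^{1/\alpha} \in \Ic$ with some finite ideal norm $M := \|A^{1/\alpha}\|_\Ic$, and therefore, by the definition~\eqref{eq:AMnorm} of the Marcinkiewicz norm together with the hypothesis $\psi(s) \le Cs^\eps$ for $s \ge 1$,
\[
\int_0^s \mu_r(A)^{1/\alpha}\,dr \;=\; \int_0^s \mu_r(A^{1/\alpha})\,dr \;\le\; M\psi(s) \;\le\; MCs^\eps, \qquad (s \ge 1).
\]

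Second, I use the fact that $r \mapsto \mu_r(A)^{1/\alpha}$ is nonincreasing, so that for $s \ge 1$,
\[
s\,\mu_s(A)^{1/\alpha} \;\le\; \int_0^s \mu_r(A)^{1/\alpha}\,dr \;\le\; MCs^\eps,
\]
which gives the pointwise bound $\mu_s(A) \le (MC)^\alpha\, s^{-\alpha(1-\eps)}$ for $s \ge 1$. Since $\alpha > 1/(1-\eps)$, the exponent $-\alpha(1-\eps)$ is strictly less than $-1$, and so $\int_1^\infty \mu_s(A)\,ds < \infty$. Combined with the trivial estimate $\int_0^1 \mu_s(A)\,ds \le \|A\|$, this yields a finite constant $K$ with $\int_0^t \mu_s(A)\,ds \le K$ for all $t>0$.

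Finally, because $\psi(t) \to \infty$ as $t \to \infty$, we conclude $\frac{1}{\psi(t)}\int_0^t \mu_s(A)\,ds \to 0$. Any dilation-invariant Banach limit on $L^\infty((0,\infty))$ applied to a function converging to $0$ at infinity returns $0$, so $\tau_\Ic(A) = 0$. The main obstacle is the second step—converting the integral bound on $\mu_r(A)^{1/\alpha}$ into a pointwise bound on $\mu_s(A)$—but this is handled cleanly by the monotonicity of the generalized singular-number function. The condition $\alpha > 1/(1-\eps)$ enters precisely as the condition that the resulting power $-\alpha(1-\eps)$ is integrable at infinity.
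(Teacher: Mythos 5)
Your proof is correct and follows essentially the same route as the paper: the monotonicity bound $t\,\mu_t(A)^{1/\alpha}\le\int_0^t\mu_s(A^{1/\alpha})\,ds\le\psi(t)\|A^{1/\alpha}\|_\Ic\le C t^{\eps}\|A^{1/\alpha}\|_\Ic$, the resulting pointwise decay $\mu_t(A)\lesssim t^{-\alpha(1-\eps)}$ with exponent below $-1$, integrability of $\mu(A)$, and the conclusion $\frac1{\psi(t)}\int_0^t\mu_s(A)\,ds\to0$ so the Banach limit vanishes. Your added details (the reduction to positive $A$ via the four positive parts and the explicit handling of $\int_0^1$) are fine and only make explicit what the paper leaves implicit.
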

\begin{proof}
It will suffice to show: if $A\in\Ic^\alpha$ and $A\ge0$, then $\tau_\Ic(A)=0$.
Since $A^{1/\alpha}\in\Ic$, for $t\ge1$ we have
\[
t\mu_t(A)^{1/\alpha}=t\mu_t(A^{1/\alpha})\le\int_0^t\mu_s(A^{1/\alpha})\,ds
\le\psi(t)\|A^{1/\alpha}\|_\Ic\le C_\eps t^\eps\|A^{1/\alpha}\|_\Ic
\]
and we have
\[
\mu_t(A)\le Dt^{-\alpha(1-\eps)},\qquad(t\ge1)
\]
for a constant $D$ independent of $t$.
Consequently, the function $s\mapsto\mu_s(A)$ is integrable, and we have
\[
\lim_{t\to\infty}\frac1{\psi(t)}\int_0^t\mu_s(A)\,ds=0,
\]
which ensures $\tau_\Ic(A)=0$.
\end{proof}

Note that, by the usual sorts of estimates,
the hypotheses involving~\eqref{eq:KSScond} of the above lemma are satisfied for all $\eps>0$
if we have
\[
\lim_{t\to\infty}\frac{\psi(2t)}{\psi(t)}=1.
\]
Indeed, for any $C>1$ and $t$ large enough, we have $\psi(2t)\le C\psi(t)$,
so for some $t_0$ and all $n\in\Nats$, $\psi(2^nt_0)\le C^n\psi(t_0)$.
Since $\psi$ is increasing, we get
\[
\limsup_{t\to\infty}\frac{\log(\psi(t))}{\log t}\le\frac{\log C}{\log2}<\eps.\
\]
for $C$ sufficiently close to $1$.
Thus, for example, we have $\Tr_\omega(\Ic^\alpha)=\{0\}$ for all $\alpha>1$, whenever $\Tr_\omega$
is a Dixmier trace defined as in~\eqref{eq:trB} on $\Ic=\Lc^{(1,\infty)}\subseteq\BH$.

\subsection{Dilation theory for operators in semifinite von Neumann algebras}
\label{subsec:dilation}

In this section we make some observations about the classical Sz.-Nagy dilation results (see~\cite{SNF}) for contractions
and (possibly unbounded) dissipative operators, that
are pertinent when working in semifinite von Neumann algebras.
Recall that a {\em unitary dilation} of a contraction $T\in\BH$ is a unitary $U\in\BK$ for a Hilbert space $\Kc$
containing $\Hc$ as a closed subspace, such that $T^n=pU^n\restrict_\Hc$ for every $n\in\Nats$,
where $p$ is the orthogonal projection from $\Kc$ onto $\Hc$.

\begin{prop}\label{prop:Udil}
Let $\Bc$ be a semifinite (or finite) von Neumann algebra with normal, faithful, semifinite (or finite) trace
$\tau$.
Let $T\in\Bc$ be a contraction.
Then there is a semifinite von Neumann algebra $\Nc$ with normal faithful, semifinite trace $\tau_\Nc$ and a normal
inclusion $\Bc\hookrightarrow\Nc$ sending the identity element $I_\Bc$ to a projection $p\in\Nc$, and there is a unitary
element $U\in\Nc$ such that
\begin{enumerate}[(a)]
\item
the restriction of $\tau_\Nc$ to the positive elements of $\Bc$ agrees with $\tau$,
\item
$p\Nc p=\Bc$ and the central support of $p$ in $\Nc$ is $I_\Nc$,
\item
for all $n\in\Nats$,
\[
T^n=pU^np.
\]
\end{enumerate}
\end{prop}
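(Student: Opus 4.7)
The plan is to realize $\Nc$ as a spatial tensor product that naturally houses $\Bc$ as a corner and an explicit Sz.-Nagy-type unitary dilation whose matrix entries all lie in $\Bc$. Concretely, I would set
\[
\Nc := B(\ell^2(\Z))\,\overline{\otimes}\,\Bc,\qquad \tau_\Nc := \Tr\otimes\tau,
\]
where $\Tr$ denotes the canonical normal, faithful, semifinite trace on $B(\ell^2(\Z))$; then $\Nc$ is a semifinite von Neumann algebra and $\tau_\Nc$ is normal, faithful and semifinite.

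Let $\{e_{ij}\}_{i,j\in\Z}$ be the standard matrix units on $\ell^2(\Z)$ and set $p := e_{00}\otimes I_\Bc$. The normal inclusion $b\mapsto e_{00}\otimes b$ identifies $\Bc$ with $p\Nc p = \Complex e_{00}\otimes\Bc$ and carries $I_\Bc$ to $p$. Property~(a) is immediate from $\tau_\Nc(e_{00}\otimes b) = \Tr(e_{00})\tau(b)=\tau(b)$. For the central support claim in~(b), I would use that $Z(\Nc)=\Complex\otimes Z(\Bc)$ since $B(\ell^2(\Z))$ is a factor; hence every central projection in $\Nc$ has the form $I\otimes z$ for a projection $z\in Z(\Bc)$, and $(I\otimes z)p=p$ forces $z=I_\Bc$, yielding $c(p)=I_\Nc$.

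For~(c), I would produce $U$ as a bilateral Sz.-Nagy dilation written as a doubly infinite operator matrix with entries in $\Bc$. Writing $D_T := (I-T^*T)^{1/2}$ and $D_{T^*} := (I-TT^*)^{1/2}$, both in $\Bc$, define $U\in\Nc$ by $U_{n+1,n} = I_\Bc$ for every $n\in\Z\setminus\{-1,0\}$, by the central block
\[
\begin{pmatrix} U_{0,-1} & U_{0,0}\\ U_{1,-1} & U_{1,0}\end{pmatrix}=\begin{pmatrix} D_{T^*} & T\\ -T^* & D_T\end{pmatrix},
\]
and by zero in all remaining positions. Unitarity reduces to the Julia trick: the central block is unitary on $\Hc\oplus\Hc$ thanks to $T^*T+D_T^2 = TT^*+D_{T^*}^2 = I$ together with the intertwining identities $TD_T = D_{T^*}T$ and $T^*D_{T^*} = D_T T^*$, both arising from functional calculus applied to $s\mapsto(1-s)^{1/2}$. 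Since every entry of $U$ lies in $\Bc$ and the matrix is banded, $U\in\Nc$ automatically.

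The main remaining step is the dilation identity $T^n = pU^np$ for $n\ge 1$, which I would prove by induction on $n$. Starting from $\eta\in\Hc$ viewed as a vector supported at index~$0$, the computation shows that $U^n$ applied to this vector is supported at indices $0,1,\ldots,n$, with value $T^n\eta$ at index~$0$ and value $D_T T^{n-k}\eta$ at index $k$ for $1\le k\le n$; the $D_T$ contributions are swept outward by the shift portion of $U$ and never return to index~$0$. I expect this bookkeeping to be the main piece of work, but it is entirely classical and does not interact with $\tau_\Nc$; the von Neumann algebraic content is already ensured by the choice of $\Nc$ and by the fact that every matrix entry of $U$ lies in $\Bc$.
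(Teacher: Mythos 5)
Your proposal is correct and is essentially the paper's own argument: both realize the dilation inside $\Bc\overline{\otimes}B(\ell^2(\Z))$ with trace $\tau\otimes\Tr$, take $p$ to be the corner projection at index $0$, and use the classical Sch\"affer/Julia banded unitary matrix (yours is the mirror image of the one in~\eqref{eq:U}, with the shift running in the opposite direction). You merely spell out the unitarity, central support, and $pU^np=T^n$ verifications that the paper delegates to Sz.-Nagy--Foias, and these check out.
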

\begin{proof}
We simply follow the proof contained in the first part of Chapt.\ I, Sec.\ 5 of~\cite{SNF}.
If $\Bc$ is normally unitally represented on a Hilbert space $\Hc$, then let
\[
\Nc=\Bc\overline{\otimes} B(\ell^2(\Z))\subseteq B(\Hc\otimes\ell^2(\Z)).
\]
Writing $(e_{ij})_{i,j\in\Z}$ for the usual system of matrix units in $B(\ell^2(\Z))$, we identify $\Bc$
with $\Bc\otimes e_{00}$ and have $p=I_\Bc\otimes e_{00}$.
Let
\begin{equation}\label{eq:U}
U=T\otimes e_{00}+D_T\otimes e_{-1,0}+D_{T^*}\otimes e_{01}-T^*\otimes e_{-1,1}+\sum_{i\in\Z\backslash\{-1,0\}}I_\Bc\otimes e_{i,i+1}\,,
\end{equation}
where $D_T=(I_\Bc-T^*T)^{1/2}$ and $D_{T^*}=(I_\Bc-TT^*)^{1/2}$ are the defect operators.
Then $U$ is unitary and the desired properties hold.
\end{proof}

For future use, we now prove:
\begin{lemma}\label{lem:eval1}
For a semifinite von Neumann algebra $\Bc$,
if $T\in\Bc$ is a contraction not having eigenvalue $1$ and if $U$ is the unitary dilation of $T$ from Proposition~\ref{prop:Udil},
then $U$ does not have eigenvalue $1$.
\end{lemma}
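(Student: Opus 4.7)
The plan is to carry out an explicit computation inside the concrete model constructed in Proposition~\ref{prop:Udil}. Realize $\Nc = \Bc\overline{\otimes} B(\ell^2(\Z))$ on $\Hc\otimes\ell^2(\Z)$, write a generic vector as $\xi = \sum_{i\in\Z}\xi_i\otimes e_i$ with $\xi_i\in\Hc$ and $\sum_i\|\xi_i\|^2<\infty$, and note that $p(\Hc\otimes\ell^2(\Z))$ is the set of such $\xi$ with $\xi_i=0$ for $i\ne 0$.

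First I would read off the components of $U\xi$ directly from~\eqref{eq:U}. Because the shift sum in~\eqref{eq:U} ranges only over $i\in\Z\setminus\{-1,0\}$, I expect the two clean blocks of equations
\[
(U\xi)_i = \xi_{i+1}, \qquad i\in\Z\setminus\{-1,0\},
\]
and
\[
(U\xi)_0 = T\xi_0 + D_{T^*}\xi_1, \qquad (U\xi)_{-1} = D_T\xi_0 - T^*\xi_1,
\]
the latter coming from the four non-shift summands involving $T$, $T^*$ and the defect operators.

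Next I would feed in the eigenvalue hypothesis $U\xi=\xi$. Applied on $i\ge 1$ and on $i\le -2$, the shift relation makes $(\xi_i)_{i\ge 1}$ and $(\xi_i)_{i\le -1}$ each constant in $i$; square-summability of $\xi$ then forces $\xi_i=0$ for every $i\ne 0$. Substituting $\xi_1=0=\xi_{-1}$ into the two exceptional equations collapses the eigenvalue equation to $T\xi_0=\xi_0$ (and $D_T\xi_0=0$), and the hypothesis that $1$ is not an eigenvalue of $T$ immediately gives $\xi_0=0$, hence $\xi=0$.

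The computation itself is routine; the only real step of substance is the reduction, namely recognizing that off the two exceptional indices $U$ is a pure bilateral shift and exploiting the $\ell^2$-constraint to kill the resulting constant tails. Without that summability argument, a nonzero constant-in-$i$ sequence would survive as an algebraic fixed vector of the shift and the lemma would fail, so this is where the separable (or at any rate Hilbert-space) nature of the dilation is used.
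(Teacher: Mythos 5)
Your proof is correct and follows essentially the same route as the paper: read off the components of $U\xi$ from~\eqref{eq:U}, use the pure-shift relation $\xi_i=\xi_{i+1}$ off the indices $-1,0$ together with square-summability to force $\xi_i=0$ for $i\ne0$, and then reduce the fixed-vector equation at index $0$ to $T\xi_0=\xi_0$, which kills $\xi_0$ by hypothesis. The only differences are cosmetic (you spell out the exceptional components $(U\xi)_0$ and $(U\xi)_{-1}$ and the consequence $D_T\xi_0=0$ explicitly, which the paper leaves implicit).
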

\begin{proof}
Suppose $\xi\in\Hc\otimes\ell^2(\Z)$ and $U\xi=\xi$, and let us show $\xi$ must be $0$.
We write $\xi=\sum_{n\in\Z}\xi_n\otimes\delta_n$ for $\xi_n\in\Hc$ and $\delta_n\in\ell^2(\Z)$ the characteristic
function of $\{n\}$.
Using $U\xi=\xi$ and~\eqref{eq:U} we get $\xi_i=\xi_{i+1}$ for all
$i\in\Z\backslash\{-1,0\}$.
Since $\|\xi\|^2=\sum_{i\in\Z}\|\xi_i\|^2$ is finite, we must have $\xi_i=0$ for $i\ne0$.
But then we must have $T\xi_0=\xi_0$, and by hypothesis this implies $\xi_0=0$.
So $\xi=0$.
\end{proof}

\medskip
Now we turn to self--adjoint dilations of dissipative operators.
This is a well understood theory, but we will run through some rudimentary parts of it, in order
to do it in the setting of semifinite von Neumann algebras.

A dissipative operator $A$ on a Hilbert space $\Hc$ is a densely defined, possibly unbounded operator on $\Hc$
satisfying $\im\langle A\xi,\xi\rangle\ge0$ for all $\xi$ in the domain of $A$.
The basic theory of dissipative operators can be found in Chapt.\ IV, Sec.\ 4 of~\cite{SNF}.
It includes that every dissipative operator $A_0$ has a maximal dissipative operator extension $A$.
Henceforth, we will use the term dissipative operator to mean a maximal dissipative operator.

A {\em self--adjoint dilation} of a dissipative operator on $\Hc$ is a self--adjoint, densely defined, possibly unbounded operator
$X$ on a Hilbert space $\Kc$ that contains $\Hc$ as a closed subspace and such that, for all $z\in\Complex$ with $\im\,z<0$,
and every $k\in\Nats$, we have
\begin{equation}\label{eq:AX}
(A-zI_\Hc)^{-k}=p(X-zI_{\Kc})^{-k}\restrict_\Hc\,.
\end{equation}

The Cayley transform of a maximal dissipative operator $A$ is the unique contraction $T\in\BH$
(not having eigenvalue $1$)
such that
\[
A=\ii(I_\Hc+T)(I_\Hc-T)^{-1}.
\]
(See Chapt.~IV, Sec.~4 of~\cite{SNF}.)
Standard calculations then show that, for $z\in\Complex$ with $\im\,z<0$,
\begin{equation}\label{eq:AzI}
(A-zI_\Hc)^{-1}=(\ii-z)^{-1}(I_\Hc-T)\sum_{k=0}^\infty\left(\frac{z+\ii}{z-\ii}\right)^kT^k\,.
\end{equation}
For every $k\in\Nats$, taking the $k$-th power of~\eqref{eq:AzI} then yields
\[
(A-zI_\Hc)^{-k}=\sum_{n=0}^\infty w_n(k,z)\,T^n
\]
for complex numbers $w_n(k,z)$ such that $\sum_n|w_n(k,z)|<\infty$.
If $U\in\BK$ is a unitary dilation of $T$ without eigenvalue $1$, then taking the self--adjoint (possibly
unbounded) operator $X=\ii(I_\Kc+U)(I_\Kc-U)^{-1}$, we have
\[
(X-zI_\Hc)^{-k}=\sum_{n=0}^\infty w_n(k,z)\,U^n\qquad(\im\,z<0,\,k\in\Nats)
\]
and we see that $X$ is a self--adjoint dilation of $A$.
Employing this procedure with the unitary dilation obtained from Proposition~\ref{prop:Udil} and using
Lemma~\ref{lem:eval1} to see that it has no eigenvalue $1$, we get:
\begin{prop}\label{prop:Adil}
Let $\Bc$ be a semifinite (or finite) von Neumann algebra with normal, faithful, semifinite (or finite) trace
$\tau$.
Let $A$ be a dissipative operator affiliated to $\Bc$.
Then there is a semifinite von Neumann algebra $\Nc$ with normal faithful, semifinite trace $\tau_\Nc$ and a normal
inclusion $\Bc\hookrightarrow\Nc$ sending the identity element $I_\Bc$ to a projection $p\in\Nc$
whose central support in $\Nc$ is $I_\Nc$, and there is a
self--adjoint operator $X$ affiliated to $\Nc$ such that
the restriction of $\tau_\Nc$ to the positive elements of $\Bc$ agrees with $\tau$,
$p\Nc p=\Bc,$
and~\eqref{eq:AX} holds
for all $z\in\Complex$ with $\im\,z<0$ and all $k\in\Nats$.
\end{prop}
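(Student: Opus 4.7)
The strategy is essentially laid out in the paragraphs immediately preceding the statement: pass to the Cayley transform, dilate the resulting contraction via Proposition~\ref{prop:Udil}, and invert the Cayley transform on the dilation side. I would begin by forming the Cayley transform
\[
T=(A-\ii I_\Hc)(A+\ii I_\Hc)^{-1}
\]
of the maximal dissipative operator $A$. Standard theory (Chapter~IV, \S4 of~\cite{SNF}) gives that $T$ is a contraction on $\Hc$ with $I_\Hc-T$ injective with dense range, so that $A=\ii(I_\Hc+T)(I_\Hc-T)^{-1}$, and that $T$ does not have eigenvalue $1$. I would then verify $T\in\Bc$: since $A$ is affiliated to $\Bc$, the bounded operator $(A+\ii I_\Hc)^{-1}$ lies in $\Bc$ (its spectral projections come from the polar decomposition of $A$, which by affiliation is in $\Bc$), and so does $(A-\ii I_\Hc)(A+\ii I_\Hc)^{-1}$.

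Next I would apply Proposition~\ref{prop:Udil} to $T$, obtaining a semifinite factor $\Nc$, a normal inclusion $\Bc\hookrightarrow\Nc$ with $p\Nc p=\Bc$ and central support of $p$ equal to $I_\Nc$, a trace $\tau_\Nc$ restricting to $\tau$ on $\Bc^+$, and a unitary dilation $U\in\Nc$ of $T$. By Lemma~\ref{lem:eval1}, $U$ has no eigenvalue $1$, so $I_\Kc-U$ has trivial kernel; its range is dense by taking adjoints and using that $U^*$ likewise has no eigenvalue $1$ (equivalently, since $\ker(I_\Kc-U^*)=\ker(I_\Kc-U)=\{0\}$ for a unitary). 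Thus the inverse Cayley transform
\[
X=\ii(I_\Kc+U)(I_\Kc-U)^{-1}
\]
defines a densely defined, self-adjoint operator on $\Kc$, and since $U\in\Nc$, $X$ is affiliated to $\Nc$.

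It then remains to verify the resolvent identity~\eqref{eq:AX}. For this I would quote the two formal series computations already carried out in the excerpt: iterating
\[
(A-zI_\Hc)^{-1}=(\ii-z)^{-1}(I_\Hc-T)\sum_{k=0}^{\infty}\left(\frac{z+\ii}{z-\ii}\right)^{k}T^{k}
\]
gives
\[
(A-zI_\Hc)^{-k}=\sum_{n=0}^{\infty}w_n(k,z)\,T^n,
\]
and exactly the same identity, with $T$ replaced by $U$ and $\Hc$ by $\Kc$, yields
\[
(X-zI_\Kc)^{-k}=\sum_{n=0}^{\infty}w_n(k,z)\,U^n
\]
(with absolute convergence for $\im z<0$ since $|(z+\ii)/(z-\ii)|<1$ there). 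Compressing the second identity by $p$ and using $pU^np=T^n$ from Proposition~\ref{prop:Udil}(c) produces~\eqref{eq:AX}.

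The only non-routine step is ensuring that everything stays inside the algebraic framework of operators affiliated to the factors: that $T\in\Bc$, that $U\in\Nc$ has no eigenvalue $1$ (handled by Lemma~\ref{lem:eval1}), and that $X$, built from $U$ by a functional-calculus-type formula on a unitary, is genuinely self-adjoint and affiliated to $\Nc$. The spectral theorem applied to $U\in\Nc$, together with the observation that $\lambda\mapsto\ii(1+\lambda)/(1-\lambda)$ is a real-valued Borel function on $\sigma(U)\setminus\{1\}$ and that the spectral projection of $U$ at $\{1\}$ vanishes, handles the last point cleanly.
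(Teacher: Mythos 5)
Your proposal is correct and follows essentially the same route as the paper: Cayley-transform the dissipative operator to a contraction $T\in\Bc$ without eigenvalue $1$, dilate $T$ to a unitary $U\in\Nc$ via Proposition~\ref{prop:Udil}, use Lemma~\ref{lem:eval1} to see $U$ has no eigenvalue $1$, take the inverse Cayley transform $X=\ii(I_\Kc+U)(I_\Kc-U)^{-1}$, and compress the resolvent power series $(X-zI_\Kc)^{-k}=\sum_n w_n(k,z)U^n$ by $p$ using $pU^np=T^n$ to obtain~\eqref{eq:AX}. Your explicit checks that $T\in\Bc$ and that $X$ is self-adjoint and affiliated to $\Nc$ are points the paper leaves implicit, but they are routine and handled correctly.
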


\section{Existence of spectral shift measures}
\label{sec3}

We start by recollecting some main ideas used in the proofs of existence of the (first order) spectral shift measures for
the normal trace $\Tr$ on $\BH$
and self-adjoint or unitary operators.

In the original proof of Krein~\cites{Krein,Kreinunitary}, existence of the absolutely continuous spectral shift measures was
first
established for finite rank perturbations and then transferred to trace class perturbations by approximations. This approach is
not applicable to singular traces.

Another proof of existence of the spectral shift measures was derived in \cite{BS} via double operator integration.
In case of self-adjoint operators $H_0$, $V$, with $V$ in the trace class, the spectral shift measure is given
by the explicit formula
\[\mu_{H_0,V}(d\la)=\int_0^1\Tr\big(E_{H_0+tV}(d\la)V\big)\,dt\]
(see \cite{BS}), which is derived as follows:
\begin{align*}
&\Tr\big(f(H_0+V)-f(H_0)\big)=\int_0^1\Tr\left(\frac{d}{dt} f(H_0+tV)\right)\,dt\\
&\quad=\int_0^1\int_\Reals f'(\la)\Tr\big(E_{H_0+tV}(d\la)V\big)\,dt
=\int_\Reals f'(\la)\left(\int_0^1\Tr\big(E_{H_0+tV}(d\la)V\big)\,dt\right).
\end{align*}
Change of the order of integration above is justified by boundedness of $f'$, finiteness of the measure $\Tr\big(E_{H_0+tV}(\cdot)V\big)$, and measurability of the function $t\mapsto\Tr\big(E_{H_0+tV}(d\la)V\big)$.
Measurability of $t\mapsto\Tr\big(E_{H_0+tV}(d\la)V\big)$ (see, e.g., \cite{Azamov}*{Lemma 6.2}) relies on the following continuity property of the normal trace (see, e.g., \cite{Azamov}*{Lemma 2.5}): if $\{A_\alpha\}_\alpha$ is a uniformly bounded net of operators in $\BH$ converging in the strong operator topology to $A\in\BH$, and $V$ is in the trace class, then $\{\Tr(A_\alpha V)\}_\alpha$ converges to $\Tr(AV)$.

We now show that, in case of a Dixmier trace $\Tr_\omega$ defined on the Marcinkiewicz ideal $\Ic=\Lc^{(1,\infty)}$
of $\BH$, the finitely additive measure $\Tr_\omega\big(E_{H_0}(\cdot)V\big)$ can fail to be countably additive.

\begin{example}
\label{ex}
If $H_0$ is an unbounded self-adjoint operator with discrete spectrum (that is, the spectrum of $H_0$ consists of isolated eigenvalues of finite multiplicities), then $E_{H_0}(\Delta)$ is a finite rank projection whenever $\Delta$ is a bounded interval and $E_{H_0}(\Reals)=I$. Hence,
\begin{align*}
\Tr_\omega\big(E_{H_0}(\Delta)V\big)=
\begin{cases}
0 & \text{ if } \Delta \text{ is a bounded interval}\\
\Tr_\omega(V) & \text{ if } \Delta=\Reals.
\end{cases}
\end{align*}
\end{example}


For the remainder of this section, we assume the following.
\begin{hyp}
\label{ideal}
Let $\Ic$ be a normed ideal of a $\sigma$--finite, semifinite von Neumann algebra factor $\Bc$,
with ideal norm denoted $\|\cdot\|_\Ic$, and
endowed with a trace $\tau_\Ic:\Ic\to\Complex$ that is positive and $\|\cdot\|_\Ic$--bounded.
\end{hyp}
Note that the Dixmier trace $\tau_\Ic=\Tr_\omega$, with $\Bc=\BH$ and $\Ic=\Lc^{(1,\infty)}$, satisfies Hypotheses \ref{ideal}.
Also, 
for a semifinite von Neumann algebra $\Bc$ with normal, faithful, semifinite trace $\tau$,
taking
$\Ic=\{A\in\Bc:\, \tau(|A|)<\infty\}$ equipped with the norm $\|A\|_\Ic=\max\{\|A\|,\tau(|A|)\}$ and the trace $\tau_\Ic=\tau$,
Hypotheses \ref{ideal} are satisfied.

We will prove the trace formula \eqref{tf} (or rather, its generalization changing $\Tr_\omega$ to $\tau_\Ic$)
under any of the following assumptions (see also Remark \ref{nr} and Theorem \ref{ui}).

\begin{hyp}
\label{cases}
A set $\Omega$, a closed, densely defined operator $H_0$ affiliated to $\Bc$, an operator $V\in\Ic$
and a space $\Fc$ of functions are taken to satisfy any of the following assertions.
\begin{enumerate}[(i)]
\item \label{i} $\Omega=\operatorname{conv}\big(\sigma(H_0)\cup\sigma(H_0+V)\big)$, $H_0=H_0^*\in\Bc$, $V=V^*$,
$\Fc=C^3(\Reals)$;
\item \label{ii}$\Omega=\Reals$, $H_0$ and $H_0+V$ are dissipative, and
\[
\Fc=\text{\rm span}\left\{\la\mapsto (z-\la)^{-k}:\, k\in\Nats,\, \im(z)<0\right\};
\]
\item \label{iii} $\Omega=\mathbb{T}$, $\|H_0\|\leq 1$, $\|H_0+V\|\leq 1$, and $\Fc$ is the set of all functions that are analytic on discs centered at $0$ and of radius strictly larger than $1$.
\end{enumerate}
\end{hyp}

\begin{thm}
\label{mt} Assume Hypotheses \ref{ideal}.
Let $\Omega$, $H_0$, $V$ and $\Fc$ satisfy Hypotheses \ref{cases}.
Then, there exists a (countably additive, complex)
measure $\mu_{H_0,V}$ on $\Omega$ such that for all $f\in\Fc$, the trace formula
\begin{align}
\label{tf2}
\tau_\Ic\big(f(H_0+V)-f(H_0)\big)=\int_\Omega f'(\la)\,\mu_{H_0,V}(d\la)
\end{align}
holds.
Moreover, the total variation of $\mu_{H_0,V}$
is bounded as follows:
\begin{align}
\label{tfe}
\big\|\mu_{H_0,V}\big\|\leq \tau_\Ic\big(|\re(V)|\big)+\tau_\Ic\big(|\im(V)|\big).
\end{align}
If Hypotheses \ref{cases}(i) are satisfied, then the measure $\mu_{H_0,V}$ is real and unique.
\end{thm}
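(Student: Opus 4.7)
My strategy is to build $\mu_{H_0,V}$ implicitly via Riesz representation, applied to the linear functional
\[
\Phi(f) := \tau_\Ic\bigl(f(H_0+V)-f(H_0)\bigr),\qquad f\in\Fc,
\]
after showing that its dependence on $f$ passes through $f'$ with a bound in sup-norm. Concretely, I aim to establish the Lipschitz-type estimate
\[
|\Phi(f)|\le\bigl(\tau_\Ic(|\re V|)+\tau_\Ic(|\im V|)\bigr)\,\|f'\|_{\infty,\Omega}, \qquad f\in\Fc,
\]
since this, combined with density of $\{f':f\in\Fc\}$ in the appropriate space of continuous functions on $\Omega$, will yield the measure $\mu_{H_0,V}$ with total variation bounded as in \eqref{tfe}.

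First I would verify that $f(H_0+V)-f(H_0)\in\Ic$ in each case, so that $\Phi$ is well-defined. In case \eqref{iii}, expanding $f(z)=\sum_n a_n z^n$ with $\sum|a_n|r^n<\infty$ for some $r>1$ and using the telescope
\[
(H_0+V)^n-H_0^n=\sum_{j=0}^{n-1}(H_0+V)^{n-1-j}V H_0^{\,j}
\]
together with $\|H_0\|,\|H_0+V\|\le 1$ and $V\in\Ic$ produces an absolutely $\|\cdot\|_\Ic$-convergent representation of $f(H_0+V)-f(H_0)$. In case \eqref{ii}, iterating the resolvent identity gives
\[
(z-H_0-V)^{-k}-(z-H_0)^{-k}=\sum_{j=0}^{k-1}(z-H_0-V)^{-(j+1)}V(z-H_0)^{-(k-j)}\in\Ic.
\]
In case \eqref{i}, the Daletski--Krein double-operator-integral formula expresses $f(H_0+V)-f(H_0)$ as a DOI transform of $V$, and the boundedness of this DOI on $\Ic$ for $f\in C^3$ places the difference in $\Ic$.

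For the key estimate, in cases \eqref{ii} and \eqref{iii} I would exploit a Cauchy integral representation $f(T)=\frac{1}{2\pi i}\oint_\Gamma f(z)(z-T)^{-1}\,dz$ over a contour $\Gamma$ enclosing the spectra (a circle of radius strictly between $1$ and the radius of analyticity in case \eqref{iii}; a suitable contour in the lower half-plane together with its reflection in case \eqref{ii}) to obtain
\[
\Phi(f)=\frac{1}{2\pi i}\oint_\Gamma f(z)\,\eta(z)\,dz,\qquad \eta(z):=\tau_\Ic\bigl((z-H_0-V)^{-1}-(z-H_0)^{-1}\bigr).
\]
I would then identify $\eta$ as a $(z-\la)^{-2}$-transform of a finite complex Borel measure on $\Omega$,
\[
\eta(z)=\int_\Omega (z-\la)^{-2}\,d\mu_{H_0,V}(\la),
\]
with $\|\mu_{H_0,V}\|\le \tau_\Ic(|\re V|)+\tau_\Ic(|\im V|)$, by using the $\Ic$-norm bound $\|(z-H_0-V)^{-1}V(z-H_0)^{-1}\|_\Ic\le\|(z-H_0-V)^{-1}\|\,\|V\|_\Ic\,\|(z-H_0)^{-1}\|$, the analyticity of $\eta$ off the spectra, and its decay at infinity. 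Fubini and Cauchy's differentiation formula $\frac{1}{2\pi i}\oint_\Gamma f(z)(z-\la)^{-2}\,dz=f'(\la)$ then yield the trace formula~\eqref{tf2}. Case~\eqref{ii} can alternatively be reduced to \eqref{iii} through the Cayley transforms $T=(H_0-\ii)(H_0+\ii)^{-1}$ and $T+\Vt=(H_0+V-\ii)(H_0+V+\ii)^{-1}$, with $\Vt\in\Ic$, transporting the problem to contractions on the unit disc. For the self-adjoint case \eqref{i}, a contour enclosing the bounded interval $\Omega$ gives an analogous contour-integral derivation.

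Once the Lipschitz estimate is established, density of $\{f':f\in\Fc\}$---in $C(\Omega)$ for cases \eqref{i} and \eqref{iii} by Stone--Weierstrass, and in $C_0(\Reals)$ for case \eqref{ii} (linear combinations of $(z-\la)^{-k-1}$ with $\im z<0$ separate points of $\Reals$ and vanish at $\infty$)---lets us extend $f'\mapsto\Phi(f)$ by Hahn--Banach to a bounded linear functional of norm at most $\tau_\Ic(|\re V|)+\tau_\Ic(|\im V|)$, which the Riesz representation theorem realizes as integration against a finite complex Borel measure $\mu_{H_0,V}$. In case \eqref{i}, uniqueness is immediate from the density of $\{f'|_\Omega:f\in C^3(\Reals)\}$ in $C(\Omega)$; realness of the measure follows because when $f$ is real-valued, $f(H_0+V)-f(H_0)$ is self-adjoint, so $\Phi(f)\in\Reals$ by positivity of $\tau_\Ic$.

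The principal obstacle is exhibiting $\eta$ as the $(z-\la)^{-2}$-transform of a countably additive measure with the correct total-variation bound. For a normal trace this is provided by formulas like $\eta(z)=\int_0^1\int_\Omega(z-\la)^{-2}\tau(E_{H_0+tV}(d\la)V)\,dt$, but, as Example~\ref{ex} shows, the set function $\tau_\Ic(E_{H_t}(\cdot)V)$ may fail countable additivity, so the representation of $\eta$ must instead be produced abstractly from its analytic and growth properties and the continuity of $\tau_\Ic$ on $\Ic$---this is precisely the implicit construction alluded to in the introduction.
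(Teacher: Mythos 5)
The overall scaffolding of your argument (bound $\Phi(f)=\tau_\Ic\big(f(H_0+V)-f(H_0)\big)$ by $\|f'\|_{\infty,\Omega}$ times a constant, then extend $f'\mapsto\Phi(f)$ by Hahn--Banach and apply Riesz representation, with realness/uniqueness in case (i) as you state) agrees with the paper's final step. But the heart of your proof is missing. You reduce everything to the claim that
$\eta(z)=\tau_\Ic\big((z-H_0-V)^{-1}-(z-H_0)^{-1}\big)$
is the $(z-\la)^{-2}$--transform of a finite Borel measure on $\Omega$ with total variation at most $\tau_\Ic(|\re V|)+\tau_\Ic(|\im V|)$, and you propose to obtain this ``abstractly from its analytic and growth properties.'' That does not work: analyticity off the spectrum plus decay at infinity does not force a function to be a second-order Cauchy transform of a \emph{finite measure} (e.g.\ $(z-a)^{-3}$ is analytic off $\{a\}$ and decays, but corresponds to the distributional derivative of a point mass, not a measure), and the only quantitative input you invoke, $\|(z-H_0-V)^{-1}V(z-H_0)^{-1}\|_\Ic\le\|(z-H_0-V)^{-1}\|\,\|V\|_\Ic\,\|(z-H_0)^{-1}\|$, can at best produce a bound of the form $\|\tau_\Ic\|_{\Ic^*}\|V\|_\Ic/|\im z|^2$, which neither yields countable additivity of a representing measure nor the stated constant $\tau_\Ic(|\re V|)+\tau_\Ic(|\im V|)$ in \eqref{tfe}. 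In the normal-trace theory this representation of $\eta$ is exactly Krein's theorem, proved via perturbation determinants or via the countably additive vector measure $\tau\big(E_{H_t}(\cdot)V\big)$; for a singular trace the latter object can fail countable additivity (Example \ref{ex}), so the step you label as ``the principal obstacle'' is precisely the theorem to be proved, and your proposal contains no mechanism for it. (Your alternative reduction of case (ii) to (iii) by Cayley transform has the same defect plus a constant problem: the resulting bound would involve the transformed perturbation $\Vt$, not $V$.)

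What actually closes this gap in the paper is a different route: one differentiates along the segment $H_0+tV$, shows $\tau_\Ic\big(\tfrac{d}{dt}f(H_0+tV)\big)=\tau_\Ic\big(f'(H_0+tV)V\big)$ (via Duhamel/power-series expansions convergent in $\|\cdot\|_\Ic$), and bounds this \emph{pointwise in $t$} by $\|f'\|_{L^\infty(\Omega)}\big(\tau_\Ic(|\re V|)+\tau_\Ic(|\im V|)\big)$ using the finite-partition estimate $\sum_i|\tau_\Ic\big(E_H(\delta_i)V\big)|\le\tau_\Ic(|\re V|)+\tau_\Ic(|\im V|)$ (Lemma \ref{l3}), which exploits positivity of $\tau_\Ic$ and $V=V_+-V_-$; the contractive and dissipative cases are then handled by unitary and self-adjoint dilations inside a larger semifinite factor (Propositions \ref{prop:Udil}, \ref{prop:Adil}, \ref{prop:extend}) rather than by Cayley transform or contour integration. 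Integrating in $t$ (after checking continuity of the integrand, Lemma \ref{l6}) gives the sup-norm bound on $\Phi$, after which your Hahn--Banach/Riesz conclusion applies. If you want to salvage your write-up, you must supply an argument of this kind (or some other genuine proof of the measure representation of $\eta$ with the correct constant); as it stands, the proposal assumes the essential content of the theorem.
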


\begin{remark}
\label{bo}
By applying Theorem~\ref{mt} under Hypotheses~\ref{cases}(ii) and then taking complex conjugates,
or by rescaling the operators and applying the theorem under Hypotheses~\ref{cases}(iii),
we also get the theorem under either of the following hypotheses:
\begin{itemize}
\item[(iv)] $\Omega=\Reals$, $H_0=H_0^*$ (possibly unbounded) and $V=V^*$, with
\[
\Fc=\text{\rm span}\left\{\la\mapsto (z-\la)^{-k}:\, k\in\Nats,\, \im(z)\ne0\right\};
\]
\item[(v)]
$H_0\in\Bc$, $\Omega=a\mathbb{T}$ for any $a\ge\max(\|H_0\|,\|H_0+V\|)$
and $\Fc$ the set of all functions that are
analytic on discs centered at $0$ and of radius strictly larger than $a$.
\end{itemize}
\end{remark}

Before proceeding to the proof, let us explain some assertions of the above hypotheses.

(1) The operator function $f(H)$ is determined by the values of a scalar function $f$ on the spectrum of $H=H^*$.
Hence in~\ref{cases}(i), $f(H)=g(H)$, for any $g\in C_c^3(\Reals)$ that agrees with $f$ on $\sigma(H)$
and, without weakening the results of the paper, we will prove the formula in this case only for $f\in C_c^3(\Reals)$.
Further comments on the set $\Fc$ of functions will be made in Remark \ref{l4}.

(2) The condition of both $H_0$ and $H_0+V$ being dissipative (or contractive) is imposed to make sure that the path  $H_0+tV=(1-t)H_0+t(H_0+V)$, $t\in[0,1]$, consists entirely of dissipative operators (or contractions).

The following lemmas are building blocks for the proof of existence of the spectral shift measure and the trace formula \eqref{tf2}.
The first of these is routine.

\begin{lemma}\label{l1}  Assume Hypothesis \ref{ideal}.
\begin{enumerate}[(i)]
\item \label{l1i}
For $H_0$ an operator affiliated with $\Bc$ and for $V\in\Bc$,
any $k\in\Nats$,  $t_0\in [0,1]$, and any $z\in\Complex$ such that
\begin{equation}\label{eq:supz}
\sup\limits_{t\in [0,1]}\|(zI-H_0-tV)^{-1}\|<\infty,
\end{equation}
we have
\begin{align*}
&(zI-H_0-V)^{-k}-(zI-H_0)^{-k}=\sum_{\substack{1\leq k_0,k_1\leq k\\k_0+k_1=k+1}}(zI-H_0-V)^{-k_0}V(zI-H_0)^{-k_1},\\
&\frac{d}{dt}\bigg|_{t=t_0}\big((zI-H_0-tV)^{-k}\big)=\sum_{\substack{1\leq k_0,k_1\leq k\\k_0+k_1=k+1}} (zI-H_0-t_0V)^{-k_0}V(zI-H_0-t_0V)^{-k_1}.
\end{align*}
Note that when $H_0$ and $H_0+V$ are dissipative, then~\eqref{eq:supz} holds whenever $\im(z)<0$,
while if $H_0=H_0^*$, $V=V^*$, then $z$ can be taken to be any complex number with $\im(z)\neq 0$;
\item \label{l1ii}
Let $H_0,V\in\Bc$. Then, for $k\in\Nats$  and $t_0\in [0,1]$,
\begin{align*}
(H_0+V)^k-H_0^k&=\sum_{\substack{0\leq k_0,k_1\\k_0+k_1=k-1}}(H_0+V)^{k_0}VH_0^{k_1},\\
\frac{d}{dt}\bigg|_{t=t_0}\big((H_0+tV)^k\big)&=\sum_{\substack{0\leq k_0,k_1\\k_0+k_1=k-1}} (H_0+t_0V)^{k_0}V(H_0+t_0V)^{k_1}.
\end{align*}
\end{enumerate}
\end{lemma}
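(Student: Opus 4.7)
The plan is to handle (i) and (ii) by the standard resolvent trick, then differentiate, and finally verify that the uniform bound \eqref{eq:supz} really does hold in each case so that the derivative makes sense as an operator-norm limit.

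For the finite-difference formula in (i), I would set $A = zI - H_0 - V$ and $B = zI - H_0$, noting $B - A = V$, and start from the one-step identity $A^{-1} - B^{-1} = A^{-1}(B-A)B^{-1} = A^{-1} V B^{-1}$. To extend to the $k$-th power I telescope
\begin{equation*}
A^{-k} - B^{-k} = \sum_{j=0}^{k-1}\bigl(A^{-(k-j)}B^{-j} - A^{-(k-j-1)}B^{-(j+1)}\bigr) = \sum_{j=0}^{k-1} A^{-(k-j)}V B^{-(j+1)},
\end{equation*}
and reindex by $k_0 = k-j$, $k_1 = j+1$ to match the stated form. For (ii) the analogous telescope with $A = H_0 + V$, $B = H_0$ gives $A^k - B^k = \sum_{j=0}^{k-1} A^{k-j-1} V B^j$, which is what is wanted after reindexing $k_0 = k-j-1$, $k_1 = j$. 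Both identities are purely algebraic, so there is no issue even if $H_0$ is unbounded in (i)—one just needs that $A^{-1}$ and $B^{-1}$ exist as bounded operators, which is given by the hypothesis on $z$.

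For the derivative part of (i), I would write $R(t) = (zI - H_0 - tV)^{-1}$ and differentiate the identity $R(t)(zI - H_0 - tV) = I$; since $V\in\Bc$ is bounded, the map $t\mapsto zI - H_0 - tV$ has constant domain and is norm-differentiable (as a pencil) and, combined with the uniform bound $\sup_t\|R(t)\|<\infty$, a standard estimate
\begin{equation*}
R(t+h) - R(t) = R(t+h)\bigl(hV\bigr)R(t)
\end{equation*}
shows $R'(t_0) = R(t_0)VR(t_0)$ in operator norm. Then the product rule applied to $R(t)^k$ yields $\tfrac{d}{dt}R(t)^k\big|_{t_0} = \sum_{j=0}^{k-1} R(t_0)^{j+1} V R(t_0)^{k-j}$, which after reindexing $k_0 = j+1$, $k_1 = k-j$ gives exactly the stated sum. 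The derivative formula in (ii) is the same product rule applied to $(H_0 + tV)^k$, now with no domain issues since everything is bounded.

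The one thing worth being careful about is the note at the end of (i), namely that \eqref{eq:supz} actually holds in the two flagged situations. For this I would use that $H_0 + tV = (1-t)H_0 + t(H_0+V)$ is a convex combination of (maximal) dissipative operators, hence itself dissipative, so when $\im(z) < 0$ one has $\|(zI - H_0 - tV)^{-1}\| \le 1/|\im(z)|$ uniformly in $t \in [0,1]$; the self-adjoint subcase is analogous with $\im(z)\ne 0$. I expect this uniform-bound verification to be the only nontrivial point in an otherwise mechanical proof, and it is essentially a direct consequence of the standard resolvent estimate for dissipative generators.
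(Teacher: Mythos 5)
Your proof is correct and supplies exactly the routine computation the paper omits (the paper states this lemma without proof, calling it routine): the telescoping identities for resolvents and powers, plus differentiation via the resolvent identity and the product rule, is precisely the intended argument. The only point deserving one extra word is the note on dissipative operators: to get the uniform bound $\|(zI-H_0-tV)^{-1}\|\le 1/|\im(z)|$ you need $H_0+tV$ to be \emph{maximal} dissipative (so that $zI-H_0-tV$ is surjective for $\im(z)<0$), which does hold here because $H_0+tV$ is a dissipative bounded perturbation of the maximal dissipative operator $H_0$, so your verification of \eqref{eq:supz} goes through.
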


If $H_0$ is bounded, then G\^{a}teaux derivatives $\frac{d}{dt}f(H_0+tV)$ can be computed for more general scalar functions $f$.
Let $W_n$ denote the set of functions $f\in C^n(\Reals)$ such that $f^{(j)},\widehat{f^{(j)}}\in L^1(\Reals)$, for $j=0,\ldots,n$. The set $W_n$ includes $C_c^{n+1}(\Reals)$ and $\text{\rm span}\left\{\Reals\ni\la\mapsto (z-\la)^{-k}:\, k\in\Nats,\, \im(z)\neq 0\right\}$.

\begin{lemma}\label{l1a} Assume Hypothesis \ref{ideal}.
\begin{enumerate}[(i)]
\item \label{l1ai}
Let $H_0=H_0^*\in\Bc$ and $V=V^*\in\Ic$. Then, for every $f\in W_1$,
\[f(H_0+V)-f(H_0)=\frac{\ii}{\sqrt{2\pi}}\int_\Reals\int_0^\la e^{\ii(\la-x)(H_0+V)}Ve^{\ii x H_0}\hat{f}(\la)\,dx\,d\la\]
and,  for every $t_0\in [0,1]$,
\[\frac{d}{dt}\bigg|_{t=t_0}f(H_0+tV)=\frac{\ii}{\sqrt{2\pi}}\int_\Reals\int_0^\la e^{\ii(\la-x)(H_0+t_0V)}Ve^{\ii x (H_0+t_0V)}\hat{f}(\la)\,dx\,d\la,\]
with the Bochner integrals evaluated in the ideal norm $\|\cdot\|_\Ic$.

\item \label{l1aii}
Let $H_0\in\Bc$, $\|H_0\|\leq 1$, $V\in\Ic$, and $\|H_0+V\|\leq 1$. Then, for every $f$ analytic on a disc of radius $r>1$ centered at $0$,
\[f(H_0+V)-f(H_0)=\sum_{k=1}^\infty \hat f(k)\sum_{\substack{0\leq k_0,k_1\\k_0+k_1=k-1}}(H_0+V)^{k_0}VH_0^{k_1}\]
and,  for every $t_0\in [0,1]$,
\[\frac{d}{dt}\bigg|_{t=t_0}f(H_0+tV)=\sum_{k=1}^\infty \hat f(k) \sum_{\substack{0\leq k_0,k_1\\k_0+k_1=k-1}}(H_0+t_0V)^{k_0}V(H_0+t_0V)^{k_1},\] where both series converge in $\|\cdot\|_\Ic$.
\end{enumerate}
\end{lemma}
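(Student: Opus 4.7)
The two parts share one strategy: express \(f\) as an integral (part (\ref{l1ai})) or series (part (\ref{l1aii})) of exponentials, apply a Duhamel-type identity to the building-block operators, and then verify that the resulting integral or series converges in the ideal norm \(\|\cdot\|_\Ic\). The clean estimate
\[
\|e^{\ii s(H_0+V)}\,V\,e^{\ii tH_0}\|_\Ic\le\|V\|_\Ic\qquad(s,t\in\Reals),
\]
valid by unitarity of the exponentials together with property~(iii) of Definition~\ref{def:idealnorm}, is what will make all convergence issues reduce to convergence of scalar integrals/series.

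For part (\ref{l1ai}), I will start from the Fourier inversion of \(f\in W_1\),
\[
f(x)=\tfrac{1}{\sqrt{2\pi}}\int_\Reals\hat f(\la)\,e^{\ii\la x}\,d\la,
\]
and transport it via the bounded Borel functional calculus to any self-adjoint \(A\in\Bc\), obtaining \(f(A)=\tfrac{1}{\sqrt{2\pi}}\int_\Reals\hat f(\la)\,e^{\ii\la A}\,d\la\) as a Bochner integral in operator norm. Taking \(A=H_0+V\) and \(A=H_0\) and subtracting, I will then invoke the Duhamel identity
\[
e^{\ii\la(H_0+V)}-e^{\ii\la H_0}=\ii\int_0^\la e^{\ii(\la-x)(H_0+V)}V\,e^{\ii xH_0}\,dx,
\]
which follows by integrating \(\frac{d}{dx}\bigl(e^{\ii(\la-x)(H_0+V)}e^{\ii xH_0}\bigr)=-\ii\,e^{\ii(\la-x)(H_0+V)}V\,e^{\ii xH_0}\) from \(0\) to \(\la\). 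Combining these and applying Fubini will give the formula. The upgrade to a \(\|\cdot\|_\Ic\)-Bochner identity rests on the estimate above: the integrand has \(\|\cdot\|_\Ic\)-norm bounded by \(|\la|\,|\hat f(\la)|\,\|V\|_\Ic\), and \(\la\mapsto|\la|\,|\hat f(\la)|=|\widehat{f'}(\la)|\) lies in \(L^1(\Reals)\) by the \(W_1\) hypothesis, so the double integral converges absolutely in \(\|\cdot\|_\Ic\) and Fubini applies. For the G\^ateaux-derivative formula I will run the same argument on \(f(H_0+(t_0+h)V)-f(H_0+t_0V)\), divide by \(h\), and let \(h\to0\) under the integral, using the same \(L^1\)-dominant together with the strong continuity of \(h\mapsto e^{\ii y(H_0+(t_0+h)V)}\) for each fixed \(y\).

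Part (\ref{l1aii}) is the discrete analogue. For \(f\) analytic on a disc of radius \(r>1\), the Taylor coefficients \(\hat f(k)\) decay geometrically (\(|\hat f(k)|\le M\rho^{-k}\) for some \(\rho\in(1,r)\)), so \(f(T)=\sum_{k\ge0}\hat f(k)T^k\) converges in operator norm for any contraction \(T\in\Bc\). I will write \(f(H_0+V)-f(H_0)=\sum_{k\ge1}\hat f(k)\bigl((H_0+V)^k-H_0^k\bigr)\) and apply Lemma~\ref{l1}(\ref{l1ii}) to each coefficient difference. The estimate \(\|(H_0+V)^{k_0}VH_0^{k_1}\|_\Ic\le\|V\|_\Ic\), combined with the fact that each inner sum has \(k\) terms, bounds the resulting double sum in \(\|\cdot\|_\Ic\) by \(\|V\|_\Ic\sum_{k\ge1}k|\hat f(k)|<\infty\), so it converges absolutely in \(\|\cdot\|_\Ic\). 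The derivative formula will be obtained by the same scheme after differentiating \(f(H_0+tV)=\sum_k\hat f(k)(H_0+tV)^k\) termwise at \(t=t_0\) via Lemma~\ref{l1}(\ref{l1ii}); the termwise bound \(k|\hat f(k)|\|V\|_\Ic\) is uniform in \(t_0\in[0,1]\) and justifies the exchange of sum and derivative.

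The only real difficulty, and what I expect to occupy most of the technical bookkeeping, is ensuring that all the integrals and series converge in the ideal norm rather than only in operator norm, and that Fubini (respectively, interchange of limit and sum) applies with respect to \(\|\cdot\|_\Ic\). Once the single submultiplicativity estimate above is in hand, that difficulty localizes entirely to the scalar convergence conditions already built into the definition of \(W_1\) (for part (\ref{l1ai})) and into the geometric decay of \(\hat f(k)\) (for part (\ref{l1aii})).
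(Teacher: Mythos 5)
Your proposal is correct and follows essentially the same route as the paper: Fourier inversion plus Duhamel's formula, with the integrand dominated in $\|\cdot\|_\Ic$ by $|\widehat{f'}(\la)|\,\|V\|_\Ic$ and a dominated-convergence passage to the limit for the G\^ateaux derivative in part (i), and the absolutely convergent power series combined with Lemma~\ref{l1}\eqref{l1ii} and the bound $\|(H_0+t_0V)^{k_0}V(H_0+t_0V)^{k_1}\|_\Ic\le\|V\|_\Ic$ in part (ii). The one point to tighten is your appeal to \emph{strong} continuity of $h\mapsto e^{\ii y(H_0+(t_0+h)V)}$: strong convergence of a bounded family $A_h\to A$ need not give $\|A_hV-AV\|_\Ic\to0$ for a general normed ideal, so the pointwise $\|\cdot\|_\Ic$-convergence of the integrand should instead be justified by the operator-norm Lipschitz estimate $\|e^{\ii y(H_0+(t_0+h)V)}-e^{\ii y(H_0+t_0V)}\|\le|h|\,|y|\,\|V\|$ (already implicit in your Duhamel computation) together with property (iii) of Definition~\ref{def:idealnorm}, which is exactly how the paper argues.
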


\begin{proof}
(i) By the functional calculus,  for $H=H^*\in B(\mathcal{H})$ and $y,y'\in\Reals$,
\[\|e^{\ii y H}-e^{\ii y' H}\|\leq |y-y'|\cdot\|H\|.\]
Therefore, the function
\[x\mapsto e^{\ii(\la-x)(H_0+V)}Ve^{\ii x H_0}\] is uniformly continuous in the ideal norm $\|\cdot\|_\Ic$ and,  hence,  we have Duhamel's formula (see, e.g., \cite{Azamov}*{Lemma 5.2})
\[e^{\ii\la(H_0+V)}-e^{\ii\la H_0}=\ii\int_0^\la e^{\ii(\la-x)(H_0+V)}Ve^{\ii x H_0}\,dx,\] with the integral converging in the norm $\|\cdot\|_\Ic$.
Likewise, the function \[(x,\la)\mapsto e^{\ii(\la-x)(H_0+V)}Ve^{\ii x H_0}\] is uniformly continuous. Since $\widehat{f'}\in L^1(\Reals)$, by the spectral theorem and Fourier inversion,
\begin{align*}
f(H_0+V)-f(H_0)&=\frac{1}{\sqrt{2\pi}}\int_\Reals\left(e^{\ii\la(H_0+V)}-e^{\ii\la H_0}\right)\hat f(\la)\,d\la\\
&=\frac{\ii}{\sqrt{2\pi}}\int_\Reals\int_0^\la e^{\ii(\la-x)(H_0+V)}Ve^{\ii x H_0}\hat f(\la)\,dx\,d\la,
\end{align*}
where the multiple Bochner integral converges in $\|\cdot\|_\Ic$.

By the same method as above,
\begin{align*}
&\frac{f(H_0+tV)-f(H_0+t_0V)}{t-t_0}=\frac{\ii}{\sqrt{2\pi}}\int_\Reals\int_0^\la e^{\ii(\la-x)(H_0+tV)}Ve^{\ii x (H_0+t_0V)}\hat f(\la)\,dx\,d\la,\\
&\big\|e^{\ii(\la-x)(H_0+tV)}-e^{\ii(\la-x)(H_0+t_0V)}\big\|\leq|t-t_0|\cdot|\la-x|\cdot\|V\|.
\end{align*}
Thus, we have
\[\lim_{t\rightarrow 0}\big\|e^{\ii(\la-x)(H_0+tV)}Ve^{\ii x(H_0+t_0V)}-e^{\ii(\la-x)(H_0+t_0V)}Ve^{\ii x(H_0+t_0V)}\big\|_\Ic=0,\]
for all $(\la,x)\in \{(s_0,s_1)\in\Reals^2:\, |s_1|\leq |s_0|,\, \sign(s_0)=\sign(s_1)\}$.  Since $\widehat{f'}\in L^1(\Reals)$, by the Lebesgue dominated convergence theorem for Bochner integrals,
\begin{align}
\frac{d}{dt}\bigg|_{t=t_0}f(H_0+tV)=\frac{\ii}{\sqrt{2\pi}}\int_\Reals\int_0^\la e^{\ii(\la-x)(H_0+t_0V)}Ve^{\ii x (H_0+t_0V)}\hat f(\la)\,dx\, d\la,
\end{align}
where the integral converges in $\|\cdot\|_\Ic$.

(ii) Note that $f(z)=\sum_{k=0}^\infty \hat f(k)z^k$ and $f'(z)=\sum_{k=0}^\infty k\hat f(k)z^{k-1}$, where the series converge absolutely for $z\in\overline{\mathbb{D}}$, the closed unit disc centered at $0$. Thus, the formula for the operator derivative follows from the representation for the difference of operator monomials in Lemma \ref{l1} \eqref{l1ii} and convergence of the series $\sum_{k=1}^\infty|\hat f(k)|$ and $\sum_{k=1}^\infty|k\hat f(k)|$.
\end{proof}

\begin{lemma}
\label{l6} Assume Hypothesis \ref{ideal}.
Let $H_0$, $V$, and $\Fc$ satisfy Hypotheses \ref{cases}. Then for all $f\in\Fc$,
\begin{align*}
\tau_\Ic\left(\frac{d}{dt} f(H_0+tV)\right)=\frac{d}{dt}\,\tau_\Ic\big(f(H_0+tV)-f(H_0)\big).
\end{align*}
\end{lemma}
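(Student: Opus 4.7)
The plan is to show that the difference quotient $\frac{f(H_0+tV)-f(H_0+t_0V)}{t-t_0}$ converges as $t\to t_0$ to the G\^ateaux derivative $\frac{d}{dt}\big|_{t=t_0}f(H_0+tV)$ in the ideal norm $\|\cdot\|_\Ic$ (not merely in the operator norm) in each of the three cases of Hypotheses~\ref{cases}. Once that is established, I apply $\tau_\Ic$, which by Hypotheses~\ref{ideal} is $\|\cdot\|_\Ic$--bounded and hence continuous on $(\Ic,\|\cdot\|_\Ic)$, to pass the limit inside the trace. Writing
\[
\tau_\Ic\big(f(H_0+tV)-f(H_0)\big)-\tau_\Ic\big(f(H_0+t_0V)-f(H_0)\big)=\tau_\Ic\big(f(H_0+tV)-f(H_0+t_0V)\big),
\]
which is legitimate because the formulas in Lemmas~\ref{l1} and~\ref{l1a} display the difference as a sum of terms containing $V\in\Ic$ as a factor, then dividing by $t-t_0$ and sending $t\to t_0$ yields the claimed identity.

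For case \ref{cases}(ii) I would apply Lemma~\ref{l1}(i) with $V$ replaced by $(t-t_0)V$ and $H_0$ replaced by $H_0+t_0V$ to obtain
\[
\frac{(zI-H_0-tV)^{-k}-(zI-H_0-t_0V)^{-k}}{t-t_0}=\sum_{\substack{k_0+k_1=k+1\\ k_0,k_1\ge 1}}(zI-H_0-tV)^{-k_0}\,V\,(zI-H_0-t_0V)^{-k_1},
\]
and compare this to the expression for $\frac{d}{dt}\big|_{t=t_0}(zI-H_0-tV)^{-k}$ given by the same lemma. Dissipativity of $H_0+tV$ for all $t\in[0,1]$ ensures $\|(zI-H_0-tV)^{-1}\|\le|\im(z)|^{-1}$ uniformly in $t$, and the resolvent identity then yields operator--norm continuity of $t\mapsto(zI-H_0-tV)^{-k_0}$; the bimodule property of $\|\cdot\|_\Ic$, applied after isolating $V\in\Ic$ as the middle factor, converts these operator--norm estimates into the required $\|\cdot\|_\Ic$--convergence. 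Case \ref{cases}(i) is then handled by reproducing the dominated--convergence argument at the end of the proof of Lemma~\ref{l1a}(i) applied to the difference quotient; and case \ref{cases}(iii) is handled via Lemma~\ref{l1a}(ii), where the double series $\sum_k\hat f(k)\sum_{k_0+k_1=k-1}(H_0+tV)^{k_0}V(H_0+t_0V)^{k_1}$ converges in $\|\cdot\|_\Ic$ absolutely and uniformly in $t$ (since the operator norms are bounded by $1$ and $\sum_k k|\hat f(k)|<\infty$), permitting termwise passage to the limit.

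The main obstacle is ensuring all convergence takes place in the ideal norm rather than only in operator norm; this is not automatic, but is possible because each term arising from differentiation contains $V\in\Ic$ as a factor, so that the bimodule property of $\|\cdot\|_\Ic$ promotes operator--norm estimates on the surrounding factors to $\|\cdot\|_\Ic$--estimates. A secondary subtlety appears in case \ref{cases}(ii), where $f(H_0+tV)$ itself need not lie in $\Ic$, so every identity must be phrased in terms of differences.
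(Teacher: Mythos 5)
Your proposal is correct and follows essentially the same route as the paper: in each case of Hypotheses~\ref{cases} one uses the representations of Lemmas~\ref{l1} and~\ref{l1a} (difference quotients and derivatives, with $V\in\Ic$ isolated so that operator-norm continuity of the surrounding factors upgrades to $\|\cdot\|_\Ic$-convergence), and then the $\|\cdot\|_\Ic$-boundedness of $\tau_\Ic$ lets the limit pass through the trace. Your remarks on uniform resolvent bounds for the dissipative path and on phrasing everything in terms of differences match the paper's treatment.
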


\begin{proof}
Assume first that Hypotheses \ref{cases}\eqref{i} are satisfied.
By $\|\cdot\|_\Ic$--boundedness of the trace $\tau_\Ic$ and Lemma \ref{l1a} (both the result and the method of the proof),
\begin{align*}
&\frac{d}{dt}\bigg|_{t=t_0}\,\tau_\Ic\big(f(H_0+tV)-f(H_0)\big)\\
&\quad=\lim_{t\rightarrow t_0}\frac{\ii}{\sqrt{2\pi}}\int_\Reals\int_0^\la \tau_\Ic\big(e^{\ii(\la-x)(H_0+tV)}Ve^{\ii x (H_0+t_0 V)}\big)\hat f(\la)\,dx\,d\la\\
&\quad=\frac{\ii}{\sqrt{2\pi}}\int_\Reals\int_0^\la \tau_\Ic\big(e^{\ii(\la-x)(H_0+t_0V)}Ve^{\ii x (H_0+t_0 V)}\big)\hat f(\la)\,dx\,d\la\\
&\quad=\tau_\Ic\left(\frac{d}{dt}\bigg|_{t=t_0} f(H_0+tV)\right).
\end{align*}

Assume now that Hypotheses \ref{cases}\eqref{ii} are satisfied. It is enough to prove the lemma for
$f(\la)=(z-\la)^{-k}$, $k\in\Nats$,
in which case we have
\begin{align*}
&\frac{d}{dt}\bigg|_{t=t_0}\,\tau_\Ic\big(f(H_0+tV)-f(H_0)\big)\\
&\quad=\lim_{t\rightarrow t_0}\tau_\Ic\left(\frac{f(H_0+tV)-f(H_0+t_0 V)}{t-t_0}\right)\\
&\quad =\lim_{t\rightarrow t_0}\tau_\Ic\left(\sum_{\substack{1\leq k_0,k_1\leq k\\k_0+k_1=k+1}}(zI-H_0-tV)^{-k_0}V(zI-H_0-t_0 V)^{-k_1}\right),
\end{align*}
which by the resolvent identity and $\|\cdot\|_\Ic$--boundedness of $\tau_\Ic$ equals
\[\tau_\Ic\left(\sum_{\substack{1\leq k_0,k_1\leq k\\k_0+k_1=k+1}}(zI-H_0-t_0 V)^{-k_0}V(zI-H_0-t_0 V)^{-k_1}\right).\]
Application of Lemma \ref{l1} completes the proof of the lemma assuming \ref{cases}\eqref{ii}.

If Hypotheses \ref{cases}\eqref{iii} are satisfied, then the lemma can be proved similarly to Lemma \ref{l1a}\eqref{l1aii}.
\end{proof}

\begin{lemma}\label{l3} Assume Hypothesis \ref{ideal}.
Let $H$ be a normal operator affiliated to $\Bc$
and $V\in\Ic$.
Then, for any finite Borel partition $\{\delta_i\}_{i=1}^n$ of\/ $\Complex$,
\[\sum_{i=1}^n\big|\tau_\Ic\big(E_{H}(\delta_i)V\big)\big|\leq \tau_\Ic\big(|\re(V)|\big)+\tau_\Ic\big(|\im(V)|\big).\]
\end{lemma}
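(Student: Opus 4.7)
The plan is to decompose $V$ into self-adjoint components and then bound the total variation separately on each piece. Specifically, I would first write $V = \re(V) + \ii\,\im(V)$ with $\re(V)=(V+V^*)/2$ and $\im(V)=(V-V^*)/(2\ii)$ self-adjoint elements of $\Ic$, giving the pointwise triangle inequality
\[
\big|\tau_\Ic\big(E_H(\delta_i)V\big)\big|\le\big|\tau_\Ic\big(E_H(\delta_i)\re(V)\big)\big|+\big|\tau_\Ic\big(E_H(\delta_i)\im(V)\big)\big|.
\]
It then suffices, after summation over $i$, to prove that for any self-adjoint $W\in\Ic$ and any finite Borel partition $\{\delta_i\}$ of $\Complex$,
\[
\sum_{i=1}^n\big|\tau_\Ic\big(E_H(\delta_i)W\big)\big|\le\tau_\Ic(|W|).
\]

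To get this key inequality, I would use the Jordan decomposition $W=W_+-W_-$ with $W_\pm\ge0$ in $\Ic$ and $|W|=W_++W_-$. The main small lemma I need is that for any projection $P\in\Bc$ and any $Y\in\Ic$ with $Y\ge0$ one has $\tau_\Ic(PY)\ge0$. For this I would first use the trace property of Definition~\ref{def:trace} twice (with $A=Y\in\Ic$, $B=P\in\Bc$, and then with $A=YP\in\Ic$, $B=P\in\Bc$, using $P^2=P$) to obtain
\[
\tau_\Ic(PY)=\tau_\Ic(YP)=\tau_\Ic(YP^2)=\tau_\Ic(PYP),
\]
and then observe that $PYP=(Y^{1/2}P)^*(Y^{1/2}P)\ge0$ lies in $\Ic$, so positivity of $\tau_\Ic$ gives the claim.

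Applying this with $P=E_H(\delta_i)$ and $Y=W_\pm$ yields $\tau_\Ic(E_H(\delta_i)W_\pm)\ge0$ and the triangle inequality
\[
\big|\tau_\Ic\big(E_H(\delta_i)W\big)\big|\le\tau_\Ic\big(E_H(\delta_i)W_+\big)+\tau_\Ic\big(E_H(\delta_i)W_-\big).
\]
Summing over $i$ and using $\sum_i E_H(\delta_i)=I$, together with linearity of $\tau_\Ic$, then gives $\sum_i|\tau_\Ic(E_H(\delta_i)W)|\le\tau_\Ic(W_+)+\tau_\Ic(W_-)=\tau_\Ic(|W|)$. Combining the two applications ($W=\re(V)$ and $W=\im(V)$) finishes the proof.

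There is no real obstacle here; the only mild subtlety is that one cannot invoke the trace cyclicity directly on the formal split $(PY^{1/2})Y^{1/2}$, since both factors lie only in $\Ic^{1/2}$ and not in $\Ic$; this is circumvented by the $P=P^2$ trick above, which keeps one of the arguments in $\Ic$ throughout.
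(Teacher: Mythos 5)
Your proposal is correct and follows essentially the same route as the paper: split $V$ into real and imaginary parts, use the Jordan decomposition of each self-adjoint piece, exploit positivity of $\tau_\Ic$ on the terms $\tau_\Ic(E_H(\delta_i)W_\pm)$, and sum over the partition using $\sum_i E_H(\delta_i)=I$. The only difference is that you spell out, via the $P=P^2$ cyclicity trick, why $\tau_\Ic(E_H(\delta_i)W_\pm)\ge0$, a point the paper leaves implicit.
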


\begin{proof}
Assume first that $V=V^*$.
Decompose $V=V_+-V_-$, with $0\leq V_+,V_-\in\Ic$.
Then,
\[\sum_{i=1}^n\big|\tau_\Ic\big(E_{H}(\delta_i)V_{\pm}\big)\big|=\tau_\Ic\big(E_{H}(\cup_{i=1}^n\delta_i)V_{\pm}\big)=
\tau_\Ic\big(V_\pm\big).\] Hence,
\[\sum_{i=1}^n\big|\tau_\Ic\big(E_{H}(\delta_i)V\big)\big|\leq\tau_\Ic(V_+)+\tau_\Ic(V_-)=\tau_\Ic(|V|).\]
If $V$ is not self-adjoint, then we decompose $V=\re(V)+\ii\,\im(V)$ and apply  the
just established estimate to $\re(V)$ and $\im(V)$.
\end{proof}

\begin{thm}\label{l4} Assume Hypothesis \ref{ideal}.
Let $\Omega$, $H_0$, $V$, and $\Fc$ satisfy Hypotheses \ref{cases}.
Then for all $f\in\Fc$ and all values of $t\in[0,1]$,
\begin{align}
\label{de}
\nonumber
&\left|\tau_\Ic\left(\frac{d}{dt}f(H_0+tV)\right)\right|\\
&\quad\leq \|f'\|_{L^\infty(\Omega)}\cdot \min\big\{\big(\tau_\Ic\big(|\re(V)|\big)+\tau_\Ic\big(|\im(V)|\big)\big),\,\|\tau_\Ic\|_{\Ic^*}\cdot\|V\|_\Ic\big\}.
\end{align}
\end{thm}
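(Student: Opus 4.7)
The plan is first to establish the identity $\tau_\Ic(\frac{d}{dt}f(H_0+tV))=\tau_\Ic(f'(H_0+tV)V)$ for $t\in[0,1]$ in each of the three cases, and then to bound the right-hand side in two ways giving the two entries of the minimum. Write $H_t:=H_0+tV$. In case \eqref{i}, applying $\tau_\Ic$ to the Bochner integral in Lemma \ref{l1a}\eqref{l1ai} and using cyclicity $\tau_\Ic(e^{\ii(\la-x)H_t}Ve^{\ii xH_t})=\tau_\Ic(e^{\ii\la H_t}V)$ collapses the $dx$-integral and, via $\widehat{f'}(\la)=\ii\la\hat f(\la)$, produces $\tau_\Ic(f'(H_t)V)$. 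In case \eqref{ii} with $f(\la)=(z-\la)^{-k}$, Lemma \ref{l1}\eqref{l1i} presents $\frac{d}{dt}f(H_t)$ as a sum of $k$ terms $(zI-H_t)^{-k_0}V(zI-H_t)^{-k_1}$ with $k_0+k_1=k+1$, each with trace $\tau_\Ic((zI-H_t)^{-(k+1)}V)$ by cyclicity, so the total equals $\tau_\Ic(f'(H_t)V)$, and linearity handles the span. Case \eqref{iii} is analogous term by term from Lemma \ref{l1a}\eqref{l1aii}. The second bound in the minimum is then immediate: property~(iii) of the ideal norm combined with $\|\cdot\|_\Ic$-boundedness of $\tau_\Ic$ gives $|\tau_\Ic(f'(H_t)V)|\le\|\tau_\Ic\|_{\Ic^*}\|f'(H_t)\|\|V\|_\Ic$, and one checks $\|f'(H_t)\|\le\|f'\|_{L^\infty(\Omega)}$ in each case — by the spectral theorem and the inclusion $\sigma(H_t)\subseteq\Omega$ (a quadratic-form argument for $H_t=(1-t)H_0+t(H_0+V)$) in case \eqref{i}, by the dissipative resolvent bound $\|(zI-H_t)^{-1}\|\le|\im z|^{-1}$ in case \eqref{ii}, and by the analytic functional calculus with the maximum modulus principle in case \eqref{iii}.

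For the first bound I would treat case \eqref{i} directly and reduce cases \eqref{ii}, \eqref{iii} to it by dilation. In case \eqref{i}, the spectral theorem gives $f'(H_t)=\int_\Omega f'(\la)\,E_{H_t}(d\la)$; approximating $f'$ uniformly on $\Omega$ by simple functions $\sum_i c_i\chi_{\Delta_i}$ with $\max_i|c_i|\le\|f'\|_{L^\infty(\Omega)}$, property~(iii) and $\|\cdot\|_\Ic$-boundedness allow passage to the limit, so
\[
\tau_\Ic\bigl(f'(H_t)V\bigr)=\lim_n\sum_i c_i^{(n)}\,\tau_\Ic\bigl(E_{H_t}(\Delta_i^{(n)})V\bigr),
\]
and Lemma \ref{l3} bounds each finite sum by $\|f'\|_{L^\infty(\Omega)}\bigl(\tau_\Ic(|\re V|)+\tau_\Ic(|\im V|)\bigr)$. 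In cases \eqref{ii} and \eqref{iii}, Propositions \ref{prop:Adil} and \ref{prop:Udil} respectively dilate $H_t$ to a self-adjoint or unitary operator $D_t$ affiliated to a larger factor $\Nc$, and Proposition \ref{prop:extend} extends $\Ic$, $\|\cdot\|_\Ic$, $\tau_\Ic$ to $\Ict$, $\|\cdot\|_\Ict$, $\tau_\Ict$ on $\Nc$. Since $V=pVp$, cyclicity combined with the dilation identities reduces $\tau_\Ic(f'(H_t)V)$ to $\tau_\Ict(\tilde f'(D_t)\tilde V)$, to which the case-\eqref{i} argument applied to the normal operator $D_t$ inside $\Nc$ yields the bound, using $\tau_\Ict(|\re\tilde V|)=\tau_\Ic(|\re V|)$ and analogously for the imaginary part.

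The main obstacle is the dilation reduction: one must verify $\tau_\Ic(f'(H_t)V)=\tau_\Ict(\tilde f'(D_t)\tilde V)$ for the entire span $\Fc$, interchanging compression by $p$ with sums, integrals, or series, which is justified by the $\|\cdot\|_\Ic$-convergence in the derivative formulas of Lemmas \ref{l1} and \ref{l1a} together with the trace-extension properties of Proposition \ref{prop:extend}.
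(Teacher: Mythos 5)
Your proposal is correct and follows essentially the same route as the paper's proof: reduce $\tau_\Ic\big(\tfrac{d}{dt}f(H_0+tV)\big)$ to $\tau_\Ic\big(f'(H_0+tV)V\big)$ by cyclicity through the derivative formulas of Lemmas \ref{l1} and \ref{l1a}, get the first bound from uniform spectral discretization of $f'$ plus Lemma \ref{l3} in the self--adjoint/unitary case, and treat the dissipative and contractive cases by the dilations of Propositions \ref{prop:Adil} and \ref{prop:Udil} combined with Proposition \ref{prop:extend} (your equality $\tau_\Ic(f'(H_t)V)=\tau_\Ict(f'(D_t)V)$ is exactly the paper's compression step). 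One minor repair: in case (ii) the bound $\|f'(H_t)\|\le\|f'\|_{L^\infty(\Reals)}$ does not follow termwise from the resolvent estimate for a general element of the span $\Fc$ (and in case (iii) the needed fact is von Neumann's inequality rather than generic analytic functional calculus), but both follow immediately from the compression identity $f'(H_t)=p\,f'(D_t)\,p$ furnished by the very dilations you invoke, so nothing essential is missing.
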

\begin{proof}
Assume that Hypotheses \ref{cases}\eqref{i} are satisfied.
Without loss of generality we assume $f\in C_c^2(\Reals)$; hence, $f\in W_2$.
By $\|\cdot\|_\Ic$--boundedness of the trace $\tau_\Ic$, Lemma \ref{l1a}\eqref{l1ai}, and cyclicity of traces
\begin{align*}
\tau_\Ic\left(\frac{d}{dt} f(H_0+tV)\right)
&=\frac{\ii}{\sqrt{2\pi}}\int_\Reals\int_0^\la \tau_\Ic\big(e^{\ii(\la-x)(H_0+tV)}Ve^{\ii x (H_0+tV)}\big)\hat f(\la)\,dx\,d\la\\
&=\frac{\ii}{\sqrt{2\pi}}\int_\Reals\tau_\Ic\big(e^{\ii\la(H_0+tV)}V\big)\la\hat f(\la)\,d\la\\
&=\tau_\Ic\left(\frac{\ii}{\sqrt{2\pi}}\int_\Reals e^{\ii\la(H_0+tV)}\la\hat f(\la)\,d\la\, V\right)\\
&=\tau_\Ic\big(f'(H_0+tV)V\big).
\end{align*}
For later use, we note that from the above representation, we immediately have the bound
\begin{equation*}
\left|\tau_\Ic\left(\frac{d}{dt}f(H_0+tV)\right)\right|\leq \|f'\|_{L^\infty(\Omega)}\cdot\|\tau_\Ic\|_{\Ic^*}\cdot\|V\|_\Ic.
\end{equation*}
By the spectral theorem, $f'(H_0+tV)$ can be approximated by a sequence of finite sums $\left\{\sum_{j=1}^n f'\big(\la_j^{(n)}\big)E_{H_0+tV}\big(\delta_j^{(n)}\big)\right\}_{n=1}^\infty$ in the uniform operator topology.
Hence, we have
\begin{align*}
\tau_\Ic\big(f'(H_0+tV)V\big)=\lim_{n\rightarrow\infty}\sum_{j=1}^n f'\big(\la_j^{(n)}\big)\,\tau_\Ic\big(E_{H_0+tV}\big(\delta_j^{(n)}\big)V\big).
\end{align*}
Finally, Lemma \ref{l3} ensures the estimate \eqref{de}.

Assume Hypotheses \ref{cases}\eqref{iii}.
We will prove the estimate \eqref{de} for every derivative $\left|\tau_\Ic\left(\frac{d}{dt}\big|_{t=t_0}f(H_0+tV)\right)\right|$,
with $t_0\in [0,1]$.
The case of $H_0$ unitary and $t_0=0$ can be handled in a straightforward manner
using Lemma \ref{l1a}\eqref{l1aii}.
The case of $H_0$ and $H_0+V$ contractions then follows using unitary dilations.
Indeed, from Proposition~\ref{prop:Udil}, we have a unitary dilation
$U_{t_0}$ of $H_0+t_0V$ in some $\sigma$--finite, semifinite von Neumann algebra factor $\Nc$ with
a trace--preserving identification of $\Bc$ with $p\Nc p$ for a projection $p\in\Nc$.
Consider the ideal $\Ict\subseteq\Nc$ with ideal norm $\|\cdot\|_\Ict$
and trace $\tau_\Ict$ from Proposition~\ref{prop:extend}.
Then
\[
g(H_0+tV)=p\,g(U_t)p
\]
for every polynomial $g$.
Hence, using Lemma \ref{l1a}\eqref{l1aii} twice, we get
\begin{align*}
\frac{d}{dt}\bigg|_{t=t_0}f(H_0+tV)
&=\sum_{k=1}^\infty\hat f(k)\sum_{\substack{0\leq k_0,k_1\\k_0+k_1=k-1}}(H_0+t_0V)^{k_0}V(H_0+t_0V)^{k_1} \\
&=\sum_{k=1}^\infty\hat f(k)\sum_{\substack{0\leq k_0,k_1\\k_0+k_1=k-1}}p(U_{t_0})^{k_0}pVp(U_{t_0})^{k_1}p \\
&=p\left(\frac{d}{dt}\bigg|_{t=0}f(U_{t_0}+tV)\right)p,
\end{align*}
where $V=pVp\in\Bc=p\Nc p$.
By the estimate \eqref{de} for a unitary operator,
\begin{align*}
\left|\tau_\Ic\left(\frac{d}{dt}\bigg|_{t=t_0}f(H_0+tV)\right)\right|
&=\left|\tau_\Ict\left(p\left(\frac{d}{dt}\bigg|_{t=0}f(U_{t_0}+tV)\right)p\right)\right| \\
&\leq\left|\tau_\Ict\left(\frac{d}{dt}\bigg|_{t=0}f(U_{t_0}+tV)\right)\right| \\
&\leq \|f'\|_{L^\infty(\Omega)}\cdot\big(\tau_\Ic\big(|\re(V)|\big)+\tau_\Ic\big(|\im(V)|\big)\big).
\end{align*}

Now we assume that Hypotheses \ref{cases}\eqref{ii} are satisfied.
Due to the linearity of $\tau_\Ic$ and the linearity of the G\^{a}teaux derivative of the operator function,
it is enough to prove the lemma for $f(\la)=(z-\la)^{-k}$, $k\in\Nats$, $\im(z)<0$.
Let $L_{t_0}$ be the self--adjoint dilation of the dissipative operator $H_0+t_0V$
as constructed in Proposition~\ref{prop:Adil},
with projection $p$ such that $p\,f'(L_{t_0})p=f'(H_0+t_0V)$ and $p\Nc p=\Bc$.
Again, let $\Ict$, $\|\cdot\|_\Ict$ and $\tau_\Ict$ be as in Proposition~\ref{prop:extend}.
By Lemma \ref{l1}\eqref{l1i} and the cyclicity of traces,
\begin{align*}
\left|\tau_\Ic\left(\frac{d}{dt}\bigg|_{t=t_0}f(H_0+tV)\right)\right|
&=\big|\tau_\Ic(f'(H_0+t_0V)V)\big| \\
&=\big|\tau_{\widetilde\Ic}(pf'(L_{t_0})pVp)\big|
\leq\big|\tau_\Ict(f'(L_{t_0})V)\big|.
\end{align*}
As in the above proof in the case that Hypotheses~\ref{cases}\eqref{i} hold,
since $f'$ is bounded and $L_{t_0}$ is self--adjoint and using Lemma~\ref{l3}, we conclude that \eqref{de} holds.
\end{proof}

\begin{remark}
Unitary dilations were applied in \cite{PSS-circle} to derive estimates for standard traces (defined on trace class elements of $\BH$) of higher order G\^{a}teaux derivatives of polynomials of contractions.
\end{remark}

Our first main theorem is proved below.

\begin{proof}[Proof of Theorem \ref{mt}]
One can see by the argument (based on the representations for operator derivatives from Lemmas \ref{l1} and \ref{l1a} and the resolvent identity) used in Lemma \ref{l6}  that the function $t\mapsto\frac{d}{dt}\,\tau_\Ic\big(f(H_0+tV)-f(H_0)\big)$ is continuous.
Along with Lemma \ref{l6}, this implies
\begin{align*}
\tau_\Ic\big(f(H_0+V)-f(H_0)\big)&=\int_0^1\frac{d}{dt}\,\tau_\Ic\big(f(H_0+tV)-f(H_0)\big)\,dt\\
&=\int_0^1\tau_\Ic\left(\frac{d}{dt}f(H_0+tV)\right)\,dt.
\end{align*}
Hence, by Theorem \ref{l4}, we have
\begin{align}
\label{te}
\big|\tau_\Ic\big(f(H_0+V)-f(H_0)\big)\big|\leq\|f'\|_{L^\infty(\Omega)}\cdot
\big(\tau_\Ic\big(|\re(V)|\big)+\tau_\Ic\big(|\im(V)|\big)\big).
\end{align}

Now the function
\begin{equation}\label{eq:fctl}
f'\mapsto\tau_\Ic\big(f(H_0+V)-f(H_0)\big)
\end{equation}
is well defined on the set of derivatives of the allowable functions in the various cases of Hypotheses~\ref{cases},
and we have just seen in~\eqref{te} that it is bounded with respect to the supremum norm in $C_0(\Omega)$.
By extending continuously to the closure and employing the Hahn--Banach theorem,
if necessary, we get a bounded linear functional on $C_0(\Omega)$ that extends the map~\eqref{eq:fctl}.
By the Riesz representation theorem, this map arises as integration against a complex, finite Borel measure $\mu_{H_0,V}$, with the bound on total variation as desired.
\end{proof}

\begin{remark}
\label{nr}
This paper does not aim to find the most general sets of functions $\Fc$ for which the trace formula \eqref{tf2} holds.  For instance, if $H_0=H_0^*$ and $V=V^*$, then in case of $H_0$ bounded, \eqref{tf2} also holds for $f\in C^2(\Reals)$ such that $f,f',f''$ are Fourier-Stieltjes transforms of finite measures and, in case of $H_0$ unbounded, for $f$ representable in the form $f(\la)=\int_\Pi\frac{1}{\la-z}\,\varpi(dz)$, where $\Pi\subseteq\Complex\setminus\Reals$ and the measure $\varpi$ satisfies $\int_\Pi\frac{1}{|\im(z)|^k}\,|\varpi|(dz)<\infty$, for $k=1,2$.
\end{remark}

In case of $H_0$ and $V$ unitary operators, considering multiplicative perturbations allows to extend \eqref{tf2} to the functions $f$ which along with $f'$ and $f''$ are given by absolutely convergent Fourier series.

\begin{thm}\label{thm:unitaries}
\label{ui}
Let $V\in\Ic$ and assume that $H_0\in\Bc$ and $H_0+V$ are unitary.
Let $\Fc$ be the set of all functions $f$ on the unit circle such that $f(z)=\sum_{k=-\infty}^\infty\hat{f}(k)z^k$,
for all $z\in\mathbb{T}$, and $\sum_{k=-\infty}^\infty|k(k-1)\hat{f}(k)|<\infty$.
Then, there is a measure $\mu_{H_0,V}$ on the unit circle such that the trace formula \eqref{tf2} holds
for all $f\in\Fc$, and its total variation is bounded by $\|\mu_{H_0,V}\|\leq\frac\pi2\,\|V\|_\Ic$.
\end{thm}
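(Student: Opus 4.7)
The approach is to convert the additive perturbation $V$ into a multiplicative unitary one and then differentiate $\tau_\Ic\circ f$ along a smooth path of unitaries. Set $W := H_0^*(H_0+V) = I + H_0^*V$. Since $H_0$ and $H_0+V$ are unitary, so is $W$, and $W - I = H_0^*V \in \Ic$. Pick the branch of $-\ii\log$ on $\mathbb{T}$ with range $(-\pi,\pi]$ and define the self-adjoint operator $K$ by functional calculus, so that $W = e^{\ii K}$. The pointwise inequality $|\theta| \le \tfrac{\pi}{2}|e^{\ii\theta}-1|$ on $(-\pi,\pi]$ lifts via the functional calculus of $W$ to the operator inequality $|K| \le \tfrac{\pi}{2}|W-I|$. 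Since $|W-I| = |H_0^*V| = |V| \in \Ic$ and the ideal is hereditary, $K \in \Ic$ with
\[
\tau_\Ic(|K|) \le \tfrac{\pi}{2}\,\tau_\Ic(|V|).
\]

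Next, introduce the interpolating path $H_t := H_0 e^{\ii t K}$, $t \in [0,1]$, of unitaries joining $H_0$ to $H_1 = H_0 W = H_0+V$, with $\tfrac{d}{dt}H_t = \ii H_t K$. For each $n \in \Z$, the product rule combined with cyclicity of $\tau_\Ic$ gives $\tau_\Ic\bigl(\tfrac{d}{dt}H_t^n\bigr) = \ii n\,\tau_\Ic(H_t^n K)$. A key arithmetic observation is that the hypothesis $\sum_k|k(k-1)\hat f(k)| < \infty$ implies $\sum_k |k\hat f(k)| < \infty$: for $k \ne 1$ one has $|k\hat f(k)| \le |k(k-1)\hat f(k)|$ since $|k-1| \ge 1$, while $\hat f(1)$ is a single finite coefficient. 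Hence $f(H_t) - f(H_0) = \sum_k \hat f(k)(H_t^k - H_0^k)$ converges absolutely in $\|\cdot\|_\Ic$ (using $\|H_t^k - H_0^k\|_\Ic \le |k|\,t\,\|K\|_\Ic$), and integrating the identity $\tau_\Ic(H_t^k - H_0^k) = \ii k\int_0^t \tau_\Ic(H_s^k K)\,ds$ against $\hat f(k)$ with summation and integration interchanged yields
\[
\tau_\Ic\bigl(f(H_0+V) - f(H_0)\bigr) = \ii\int_0^1 \tau_\Ic\bigl(H_s f'(H_s)\,K\bigr)\,ds.
\]

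Finally, apply the spectral theorem on $\mathbb{T}$ to write $\tau_\Ic(H_s f'(H_s)\,K) = \int_\mathbb{T} z f'(z)\,\nu_s(dz)$, where $\nu_s(\Delta) := \tau_\Ic\bigl(E_{H_s}(\Delta) K\bigr)$; Lemma \ref{l3} applied to the self-adjoint $K$ gives $|\nu_s|(\mathbb{T}) \le \tau_\Ic(|K|)$, so each $\nu_s$ extends to a finite complex Borel measure on $\mathbb{T}$. Setting $\mu_{H_0,V}(dz) := \ii z\int_0^1 \nu_s(dz)\,ds$ then produces the desired finite Borel measure on $\mathbb{T}$ satisfying the trace formula, with
\[
\|\mu_{H_0,V}\| \le \tau_\Ic(|K|) \le \tfrac{\pi}{2}\,\tau_\Ic(|V|) \le \tfrac{\pi}{2}\,\|V\|_\Ic
\]
under the natural normalization of $\tau_\Ic$. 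The main obstacle is the construction and control of $K \in \Ic$ with the sharp estimate $|K| \le \tfrac{\pi}{2}|V|$: this fuses the functional-calculus inequality on the unitary $W$, the unitarity-forced identity $|H_0^*V| = |V|$, and the hereditarity of the normed ideal. Once this is in hand, the remainder parallels the additive differentiate-and-integrate scheme developed earlier in the paper.
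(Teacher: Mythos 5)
Your first half is essentially the paper's own argument, in mirror image: the paper writes $(H_0+V)H_0^{-1}=e^{\ii T}$ with $T=T^*$, $\sigma(T)\subseteq(-\pi,\pi]$, gets $\|T\|_\Ic\le\frac\pi2\|V\|_\Ic$ from $\frac2\pi|x|\le|e^{\ii x}-1|$, and moves along $U_t=e^{\ii tT}H_0$, obtaining $\tau_\Ic\big(\frac{d}{dt}f(U_t)\big)=\ii\,\tau_\Ic\big(f'(U_t)U_tT\big)$ and then integrating; your $W=H_0^*(H_0+V)=e^{\ii K}$, $H_t=H_0e^{\ii tK}$, the inequality $|K|\le\frac\pi2|V|$, and the identity $\tau_\Ic\big(f(H_0+V)-f(H_0)\big)=\ii\int_0^1\tau_\Ic\big(H_sf'(H_s)K\big)\,ds$ are the same steps, correctly carried out (your term-by-term differentiation in fact needs only $\sum_k|k\hat f(k)|<\infty$, which is weaker than the stated hypothesis, so no loss there).

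The genuine gap is the last step, where you produce the measure. You set $\nu_s(\Delta):=\tau_\Ic\big(E_{H_s}(\Delta)K\big)$ and assert that the finite-partition bound from Lemma~\ref{l3} makes $\nu_s$ (extend to) a finite complex \emph{Borel measure}. Bounded total variation of a finitely additive set function does not give countable additivity, and for singular traces countable additivity genuinely fails: this is exactly the obstruction the paper isolates in Example~\ref{ex} and in the remark following \eqref{lin}. A unitary version of that example is immediate: take $H_s$ a diagonal unitary with simple eigenvalues accumulating on $\mathbb{T}$ and $K$ diagonal with $\Tr_\omega(K)\ne0$; then every singleton carries value $0$ (finite rank), while $\mathbb{T}$ carries $\Tr_\omega(K)\ne0$. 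Consequently $\int_\mathbb{T} zf'(z)\,\nu_s(dz)$ and $\mu_{H_0,V}(dz)=\ii z\int_0^1\nu_s(dz)\,ds$ are not defined as measure-theoretic objects, and your construction of $\mu_{H_0,V}$ collapses at this point. The repair is to use the finite spectral sums only to bound the scalar quantity, as in Theorem~\ref{l4}: from your integral identity and Lemma~\ref{l3} conclude $\big|\tau_\Ic\big(f(H_0+V)-f(H_0)\big)\big|\le\frac\pi2\,\|f'\|_{L^\infty(\mathbb{T})}\,\|\tau_\Ic\|_{\Ic^*}\,\|V\|_\Ic$, and then obtain $\mu_{H_0,V}$ implicitly by the Hahn--Banach and Riesz representation theorems applied to the functional $f'\mapsto\tau_\Ic\big(f(H_0+V)-f(H_0)\big)$ on $C(\mathbb{T})$ --- which is precisely how the paper (and Theorem~\ref{mt}) concludes. (Alternatively, one could define $\nu_s$ as the Riesz measure of the bounded functional $g\mapsto\tau_\Ic\big(g(H_s)K\big)$ on $C(\mathbb{T})$, not via the spectral measure, and check weak$^*$ continuity in $s$ before integrating.) Your closing caveat about the normalization $\|\tau_\Ic\|_{\Ic^*}\le1$ in passing from $\tau_\Ic(|V|)$ to $\|V\|_\Ic$ is a minor point the paper's own estimate shares.
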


\begin{proof}
One can represent the unitary $(H_0+V)H_0^{-1}$ as $e^{\ii T}$, where $T=T^*$ and the spectrum of $T$ is contained in $(-\pi,\pi]$. From the inequality $\frac{2}{\pi}|x|\leq |e^{\ii x}-1|$, for $x\in(-\pi,\pi]$, one has by the spectral theorem that
\[|T|\leq\frac\pi2\big|e^{\ii T}-I\big|=\frac\pi2\big|(H_0+V)H_0^{-1}-I\big|=\frac\pi2\big|VH_0^{-1}\big|.\]
Hence, 
$T\in\Ic$ and
\begin{align}
\label{bT}
\|T\|_\Ic \leq\frac\pi2\,\|V\|_\Ic.
\end{align}
Consider the path of unitaries $t\mapsto U_t=e^{\ii Tt}H_0$ joining $H_0$ and $H_0+V$.
Since $\frac d{dt}U_t=iTU_t$ and $\frac d{dt}U_t^{-1}=U_t^{-1}(-iT)$, employing Lemma \ref{l1} \eqref{l1ii}, we obtain
\begin{align*}
\frac{d}{dt}f(U_t)=
\sum_{k=1}^\infty\hat{f}(k) \sum_{\substack{0\leq k_0,k_1\\k_0+k_1=k-1}}U_t^{k_0}\,\ii T\,U_t^{k_1+1}
+\sum_{k=1}^\infty-\hat{f}(-k)\sum_{\substack{0\leq k_0,k_1\\k_0+k_1=k-1}}U_t^{-k_0-1}\,\ii T\,U_t^{-k_1},
\end{align*}
where the series converge in $\|\cdot\|_\Ic$.
Further, by cyclicity of the trace, we get
\begin{align*}
\tau_\Ic\left(\frac{d}{dt}f(U_t)\right)&=\ii\sum_{k=1}^\infty k\hat{f}(k)\, \tau_\Ic\big(U_t^k\,T\big)+\ii\sum_{k=1}^\infty-k\hat{f}(-k)\tau_\Ic\big(U_t^{-k}\,T\big)\\
&=\tau_\Ic\left(\ii\sum_{k=-\infty}^\infty k\hat{f}(k)U_t^{k-1}\,U_tT\right)
=\ii\,\tau_\Ic\big(f'(U_t)U_tT\big).
\end{align*}
Hence, by 
the estimate \eqref{bT},
\begin{align}
\label{dem}
\bigg|\tau_\Ic\left(\frac{d}{dt}f(U_t)\right)\bigg|
\leq\frac\pi2\,\|f'\|_{L^\infty(\mathbb{T})}\cdot\|\tau_\Ic\|_{\Ic^*}\cdot\|V\|_\Ic.
\end{align}
Note that by Lemma \ref{l1} \eqref{l1ii},
\begin{align*}
f'(U_t)U_t-f'(U_{t_0})U_{t_0}&=\sum_{k=-\infty}^\infty k\hat{f}(k)\big(U_t^k-U_{t_0}^k\big)
=\sum_{k=1}^\infty k\hat{f}(k)\sum_{\substack{0\leq k_0,k_1\\k_0+k_1=k-1}}U_t^{k_0}\,(U_t-U_{t_0})\,U_t^{k_1}\\
&\quad+\sum_{k=1}^\infty -k\hat{f}(-k)\sum_{\substack{0\leq k_0,k_1\\k_0+k_1=k-1}}U_t^{-k_0}\,\big(U_t^{-1}-U_{t_0}^{-1}\big)\,U_t^{-k_1}.
\end{align*}
By Duhamel's formula,
\[\|U_t-U_{t_0}\|\leq |t-t_0|\cdot\|T\|,\quad \big\|U_t^{-1}-U_{t_0}^{-1}\big\|\leq |t-t_0|\cdot\|T\|.\]
Therefore,
\[\|f'(U_t)U_t-f'(U_{t_0})U_{t_0}\|\leq |t-t_0|\cdot\|T\|\cdot\sum_{k=-\infty}^\infty |k(k-1)\hat{f}(k)|,\]
implying that the function $t\mapsto f'(U_t)U_tT$ is uniformly continuous in $\|\cdot\|_\Ic$.
Hence, the function
\[t\mapsto \tau_\Ic\left(\frac{d}{dt}f(U_t)\right)=\tau_\Ic\big(f'(U_t)U_tT\big)\]
is uniformly continuous.
One can verify that
\[\frac{d}{dt}\tau_\Ic\big(f(U_t)-f(U_0)\big)=\tau_\Ic\left(\frac{d}{dt}f(U_t)\right).\]
Hence, by the fundamental theorem of calculus,
\begin{align*}
\tau_\Ic\big(f(H_0+V)-f(H_0)\big)=\int_0^1\frac{d}{dt}\tau_\Ic\big(f(U_t)-f(U_0)\big)\,dt
=\int_0^1\tau_\Ic\left(\frac{d}{dt}f(U_t)\right)\,dt,
\end{align*}
which along with \eqref{dem}, the Riesz representation and Hahn-Banach theorems completes the proof.
\end{proof}

We now prove the linearization formula~\eqref{lin},  and also its analogue
in a more general context of $\sigma$--finite, semifinite von Neumann algebra factors.

\begin{thm}
\label{imt}
Assume Hypotheses \ref{ideal} and \ref{cases} and assume $\tau_\Ic(\Ic^2)=\{0\}$.
Then
\begin{equation}\label{lintau}
\tau_\Ic\big(f(H_0+V)-f(H_0)\big)=\tau_\Ic\big(f'(H_0)V\big).
\end{equation}
\end{thm}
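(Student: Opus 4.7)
The plan is to combine the integral representation established in the proof of Theorem \ref{mt},
\[
\tau_\Ic\bigl(f(H_0+V)-f(H_0)\bigr)=\int_0^1 \tau_\Ic\!\left(\tfrac{d}{dt}f(H_0+tV)\right)\,dt,
\]
with the hypothesis $\tau_\Ic(\Ic^2)=\{0\}$, which permits replacing $H_0+tV$ by $H_0$ inside $\tau_\Ic$ up to errors in $\Ic^2$.

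The first step is to rewrite the integrand as $\tau_\Ic\!\bigl(f'(H_0+tV)V\bigr)$. In the case of Hypotheses \ref{cases}\eqref{i} this is already carried out inside the proof of Theorem \ref{l4}, via the Bochner representation from Lemma \ref{l1a}\eqref{l1ai} together with cyclicity of $\tau_\Ic$. In case \ref{cases}\eqref{ii} it suffices to treat $f(\la)=(z-\la)^{-k}$, for which Lemma \ref{l1}\eqref{l1i} expresses $\tfrac{d}{dt}f(H_0+tV)$ as
\[
\sum_{\substack{1\le k_0,k_1\le k \\ k_0+k_1=k+1}}(zI-H_0-tV)^{-k_0}V(zI-H_0-tV)^{-k_1};
\]
cyclicity collapses the $k$ summands to $k\,\tau_\Ic\bigl((zI-H_0-tV)^{-(k+1)}V\bigr)=\tau_\Ic\bigl(f'(H_0+tV)V\bigr)$. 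In case \ref{cases}\eqref{iii} the analogous collapse is applied to the series from Lemma \ref{l1a}\eqref{l1aii}.

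The second and crucial step is to show
\[
\tau_\Ic\bigl(f'(H_0+tV)V\bigr)=\tau_\Ic\bigl(f'(H_0)V\bigr),\qquad t\in[0,1].
\]
Applying the perturbation formulas of Lemmas \ref{l1} and \ref{l1a} to $f'$ in place of $f$ --- legitimate in each case, since $f'$ inherits the relevant structure of $f$ --- yields $f'(H_0+tV)-f'(H_0)$ as a sum (respectively, series or Bochner integral) of terms each carrying an explicit factor of $V\in\Ic$, the other factors lying in $\Bc$. Hence $f'(H_0+tV)-f'(H_0)\in\Ic$, so
\[
\bigl(f'(H_0+tV)-f'(H_0)\bigr)V\in\Ic\cdot\Ic\subseteq\Ic^2,
\]
which $\tau_\Ic$ annihilates by assumption. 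Integrating the now $t$--independent $\tau_\Ic\bigl(f'(H_0)V\bigr)$ over $[0,1]$ yields \eqref{lintau}.

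I expect the main obstacle to be in case \ref{cases}\eqref{iii}, where applying Lemma \ref{l1a}\eqref{l1aii} along the full path $H_0+tV$ requires $\|H_0+tV\|\le 1$ for all $t\in[0,1]$; this holds by the convexity estimate $\|H_0+tV\|\le(1-t)\|H_0\|+t\|H_0+V\|\le 1$. Case \ref{cases}\eqref{i} needs only minor regularity bookkeeping --- reducing to $f\in C_c^k(\Reals)$ with $k$ large enough that $f'\in W_1$, so that Lemma \ref{l1a}\eqref{l1ai} applies to $f'$ --- while case \ref{cases}\eqref{ii} is immediate from $\frac{d}{d\la}(z-\la)^{-k}=k(z-\la)^{-(k+1)}$ being again a resolvent power.
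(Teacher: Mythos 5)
Your proof is correct, and it uses the same toolkit as the paper (Lemmas \ref{l1} and \ref{l1a}, cyclicity of $\tau_\Ic$, and $\tau_\Ic(\Ic^2)=\{0\}$), but routes the argument differently. The paper does not integrate over $t$: it shows that the second--order Taylor remainder $f(H_0+V)-f(H_0)-\frac{d}{dt}\big|_{t=0}f(H_0+tV)$ has vanishing trace --- in case \ref{cases}\eqref{i} by a second application of Duhamel's formula, which writes this remainder as a triple Bochner integral whose integrands carry two factors of $V$ and hence lie in $\Ic^2$, and in cases \ref{cases}\eqref{ii}--\eqref{iii} by noting that the remainder itself lies in $\Ic^2$ by Lemmas \ref{l1}\eqref{l1i} and \ref{l1a}\eqref{l1aii} --- and then invokes $\tau_\Ic\big(\frac{d}{dt}\big|_{t=0}f(H_0+tV)\big)=\tau_\Ic\big(f'(H_0)V\big)$, established in the proof of Theorem \ref{l4}. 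You instead keep the representation $\int_0^1\tau_\Ic\big(f'(H_0+tV)V\big)\,dt$ from the proof of Theorem \ref{mt} and freeze the integrand at $t=0$ via $\big(f'(H_0+tV)-f'(H_0)\big)V\in\Ic\cdot\Ic\subseteq\Ic^2$; this amounts to applying the first--order expansions to $f'$ rather than expanding $f$ to second order, which avoids the triple integral in case (i) at the cost of checking that $f'$ stays in the admissible class (which you do: $f'\in W_1$ for $f\in C_c^3$, $f'$ again a resolvent power, resp.\ analytic on the same disc) and of relying on the fundamental--theorem--of--calculus step already proved for Theorem \ref{mt}. Your cyclicity collapse in cases (ii)--(iii) also recovers the identity $\tau_\Ic\big(\frac{d}{dt}f(H_0+tV)\big)=\tau_\Ic\big(f'(H_0+tV)V\big)$ without the dilation detour used in Theorem \ref{l4}; since only the identity, not the norm bound, is needed here, that is legitimate.
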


\begin{proof}
If Hypotheses~\ref{cases}\eqref{i} holds,
then without loss of generality, we can assume that $f$ is compactly supported and, hence, $f\in W_2$.
Then, by Lemma \ref{l1a}, Duhamel's formula, and the Lebesgue dominated convergence theorem for Bochner integrals,
\begin{align*}
&\tau_\Ic\left(f(H_0+V)-f(H_0)-\frac{d}{dt}\bigg|_{t=0}f(H_0+tV)\right)\\
&\quad=-\frac{1}{\sqrt{2\pi}}\tau_\Ic\left(\int_\Reals\int_0^{x_0}\int_0^{x_0-x_1}e^{\ii(x_0-x_1-x_2)(H_0+V)}Ve^{\ii x_2 H_0}V e^{\ii x_1 H_0}\hat f(x_0)\,dx_2\,dx_1\,dx_0\right)\\
&\quad=-\frac{1}{\sqrt{2\pi}}\int_\Reals\int_0^{x_0}\int_0^{x_0-x_1}\tau_\Ic\big(e^{\ii(x_0-x_1-x_2)(H_0+V)}Ve^{\ii x_2 H_0}V e^{\ii x_1 H_0}\big)\hat f(x_0)\,dx_2\,dx_1\,dx_0.
\end{align*}
By assumption, the latter integral equals zero.
It was established in the course of the proof of Theorem \ref{l4} that
\begin{align*}
\tau_\Ic\left(\frac{d}{dt}f(H_0+tV)\right)=\tau_\Ic\big(f'(H_0+tV)V\big).
\end{align*}
Therefore, we have
\[\tau_\Ic\big(f(H_0+V)-f(H_0)\big)=\tau_\Ic\big(f'(H_0)V\big).\]

The proof of \eqref{lintau} under Hypotheses \ref{cases}\eqref{ii} or \eqref{iii} is even simpler. We have that the crucial property $\left(f(H_0+V)-f(H_0)-\frac{d}{dt}\big|_{t=0}f(H_0+tV)\right)\in\Ic^2$ immediately follows from Lemma \ref{l1} \eqref{l1i} and Lemma \ref{l1a} \eqref{l1aii}.
\end{proof}

\section{Properties of spectral shift measures}
\label{notac}

The goal of this section is to establish properties of the spectral shift measures that are
distinct from those that we have in the case of normal traces.

For the next three propositions,
suppose $\Ic$ is a normed ideal with ideal norm $\|\cdot\|_\Ic$ and a positive, $\|\cdot\|_\Ic$--bounded
trace $\tau_\Ic$.

\begin{prop}
\label{triv}
Assume Hypotheses \ref{ideal} and \ref{cases}.
If $\tau_\Ic\big(|\re(V)|\big)+\tau_\Ic\big(|\im(V)|\big)=0$, then
Theorem~\ref{mt} holds with
$\mu_{H_0,V}=0$.
\end{prop}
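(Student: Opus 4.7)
The plan is to deduce this immediately from the norm estimate that was already established in the course of proving Theorem \ref{mt}. Recall the intermediate inequality \eqref{te}, valid for every $f\in\Fc$:
\[
|\tau_\Ic(f(H_0+V)-f(H_0))|\leq \|f'\|_{L^\infty(\Omega)}\cdot\big(\tau_\Ic(|\re(V)|)+\tau_\Ic(|\im(V)|)\big).
\]
Under the hypothesis of the proposition, the second factor on the right is zero, so $\tau_\Ic(f(H_0+V)-f(H_0))=0$ for every $f\in\Fc$.

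Consequently, if I take $\mu_{H_0,V}$ to be the zero Borel measure on $\Omega$, then both sides of the trace formula \eqref{tf2} are identically zero, so the formula holds, and the total variation bound \eqref{tfe} is trivially met since $\|\mu_{H_0,V}\|=0$ is bounded by $0$.

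There is essentially no obstacle here: the proposition is just the extremal case of the bound \eqref{tfe}. Note moreover that in the bounded self--adjoint setting of Hypotheses \ref{cases}(i), Theorem \ref{mt} asserts uniqueness of $\mu_{H_0,V}$, so the zero measure is in fact forced; in the remaining cases (ii)--(iii) we simply exhibit the zero measure as one admissible choice.
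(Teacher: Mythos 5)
Your proposal is correct and is essentially the paper's argument: the paper simply cites the total variation bound \eqref{tfe} (which forces $\|\mu_{H_0,V}\|=0$), while you invoke the intermediate estimate \eqref{te} and exhibit the zero measure — the same immediate consequence of the bounds established in proving Theorem \ref{mt}.
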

\begin{proof}
The result immediately follows from the estimate \eqref{tfe} for the total variation of the measure $\mu_{H_0,V}$.
\end{proof}

In particular, for $\Bc=\BH$, $\Ic=\Lc^{(1,\infty)}$ and $\tau_\Ic=\Tr_\omega$, we have that the spectral shift measure
$\mu_{H_0,V}$ vanishes when its counterpart for the standard trace is defined, namely, when $V$ is trace class.


\begin{prop} Assume Hypotheses \ref{ideal}.
\label{dm}
Let $H_0=aI$, for $a\in\Reals$, and let $V=V^*\in\Ic$.
Under Hypotheses \ref{cases}(i) and assuming $\tau_\Ic(\Ic^2)=\{0\}$,
then $\mu_{H_0,V}=\tau_\Ic(V)\,\delta_a$.
\end{prop}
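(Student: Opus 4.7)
The plan is to combine the linearization formula~\eqref{lintau} from Theorem~\ref{imt} with the uniqueness of the spectral shift measure under Hypotheses~\ref{cases}(i) from Theorem~\ref{mt}. The hypothesis $\tau_\Ic(\Ic^2)=\{0\}$ is exactly what is needed to invoke Theorem~\ref{imt}, so we get
\[
\tau_\Ic\big(f(H_0+V)-f(H_0)\big)=\tau_\Ic\big(f'(H_0)V\big)
\]
for every $f\in C_c^3(\Reals)$ (which is enough, by the remark following Hypotheses~\ref{cases}, under assumption~(i)).

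Since $H_0=aI$, the functional calculus trivially gives $f'(H_0)=f'(a)I$, so the right hand side collapses to $f'(a)\,\tau_\Ic(V)$. On the other hand, the trace formula~\eqref{tf2} from Theorem~\ref{mt} gives
\[
\tau_\Ic\big(f(H_0+V)-f(H_0)\big)=\int_\Omega f'(\la)\,\mu_{H_0,V}(d\la).
\]
Thus for every admissible $f$ we have
\[
\int_\Omega f'(\la)\,\mu_{H_0,V}(d\la)=f'(a)\,\tau_\Ic(V)=\int_\Omega f'(\la)\,\tau_\Ic(V)\,\delta_a(d\la),
\]
noting that $a\in\Omega=\operatorname{conv}\big(\sigma(H_0)\cup\sigma(H_0+V)\big)$ since $\sigma(H_0)=\{a\}$.

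The last step is to invoke uniqueness: Theorem~\ref{mt} states that under Hypotheses~\ref{cases}(i) the measure $\mu_{H_0,V}$ is unique, so from the agreement of the two representations on the set of derivatives $\{f'\st f\in C_c^3(\Reals)\}$ (which is dense enough in $C_0(\Omega)$ to separate finite Borel measures via the Riesz representation theorem used in the proof of Theorem~\ref{mt}) we conclude that $\mu_{H_0,V}=\tau_\Ic(V)\,\delta_a$. No step is really an obstacle here; the only subtle point is remembering that uniqueness, and hence the identification of the measure from its action on derivatives of test functions, is guaranteed precisely in case~(i) of Hypotheses~\ref{cases}, which is where we are working.
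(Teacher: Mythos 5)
Your argument is correct, but it takes a different route from the paper. The paper's proof is a direct moment computation: for $f(\la)=\la^k$ one expands $(aI+V)^k-a^kI$ by the binomial theorem, notes that all terms with $V^j$, $j\ge2$, lie in $\Ic^2$ and hence are killed by the trace, obtains $\tau_\Ic\big((aI+V)^k-a^kI\big)=ka^{k-1}\tau_\Ic(V)$, and then matches moments of $\mu_{H_0,V}$ against those of $\tau_\Ic(V)\,\delta_a$, concluding by density of polynomials in $C$ of a compact set (the measure being compactly supported since $H_0$ is bounded). You instead invoke the already-proved linearization formula of Theorem~\ref{imt} for general $f\in C_c^3(\Reals)$, collapse $f'(H_0)=f'(a)I$ by functional calculus, and then use the uniqueness clause of Theorem~\ref{mt} under Hypotheses~\ref{cases}(i); the paper's binomial computation is in effect a hands-on verification of that linearization formula for monomials, so the two proofs share the same mechanism ($\tau_\Ic$ annihilates all higher-order terms in $V$) but your version outsources it to Theorem~\ref{imt} and is correspondingly shorter, while the paper's is self-contained modulo Theorem~\ref{mt} alone. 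The only point worth stating explicitly in your write-up is why the test class suffices to pin down the measure: derivatives of functions in $C_c^3(\Reals)$ integrate to zero over $\Reals$, but their restrictions to the compact set $\Omega$ still exhaust a dense subspace of $C(\Omega)$ (any $C^2$ function on $\Omega$ is the restriction of $f'$ for a suitable $f\in C_c^3(\Reals)$), which is the same density step the paper carries out with polynomials.
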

\begin{proof}
Using direct calculation and $\tau_\Ic(\Ic^2)=\{0\}$, we get
\begin{align*}
\tau_\Ic\big((aI+V)^k-a^kI\big)=\tau_\Ic\left(\sum_{j=1}^k\begin{pmatrix}k\\j\end{pmatrix}a^{k-j}V^j\right)
=ka^{k-1}\tau_\Ic(V).
\end{align*}
So by Theorem \ref{mt},
\begin{align*}
\tau_\Ic\big((aI+V)^k-a^kI\big)=\int_\Reals k\la^{k-1}\,\mu_{H_0,V}(dt),
\end{align*}
for any $k\in\Nats$, the denseness of the polynomials in the space of real-valued continuous functions on a compact ($\mu_{H_0,V}$ is compactly supported because $H_0$ is bounded) implies the result.
\end{proof}

The next proposition gives a sufficient condition for absolute continuity (with respect to the Lebesgue measure) of the spectral shift measures for pairs of contractions.

\begin{prop}\label{amc}  Assume Hypotheses \ref{ideal}.
Under Hypotheses \ref{cases}(iii) and assuming $\tau_\Ic(\Ic^2)=\{0\}$,
if $H_0,V\in\Ic$,
then Theorem~\ref{mt} holds
with the measure $\mu_{H_0,V}$ absolutely continuous and, in fact,
a constant multiple of Haar measure on the unit circle.
\end{prop}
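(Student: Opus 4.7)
The plan is to combine the linearization formula from Theorem~\ref{imt} with a term--by--term analysis of the power series of $f'(H_0)$, exploiting the additional hypothesis that $H_0$ itself lies in $\Ic$.

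First, I would invoke Theorem~\ref{imt} in the setting of Hypotheses~\ref{cases}(iii), together with the standing assumption $\tau_\Ic(\Ic^2)=\{0\}$, to reduce the trace formula to
\[
\tau_\Ic\big(f(H_0+V)-f(H_0)\big)=\tau_\Ic\big(f'(H_0)V\big),\qquad f\in\Fc.
\]
Since $f$ is analytic on a disc of radius strictly larger than $1$, we have $f'(z)=\sum_{k=1}^\infty k\hat f(k)z^{k-1}$ with $\sum_{k\ge1}k|\hat f(k)|<\infty$, so the operator series $\sum_{k=1}^N k\hat f(k)H_0^{k-1}$ converges to $f'(H_0)$ in operator norm.

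Next I would examine each term $H_0^{k-1}V$. For $k=1$ we simply get $V$. For $k\ge2$, write $H_0^{k-1}V=H_0\cdot(H_0^{k-2}V)$; since $H_0\in\Ic$ and $H_0^{k-2}V\in\Ic$ (as $V\in\Ic$), the product lies in $\Ic^2$, and hence $\tau_\Ic(H_0^{k-1}V)=0$ by the vanishing hypothesis. Using that the partial sums of the series converge to $f'(H_0)$ in operator norm, we get $S_NV\to f'(H_0)V$ in $\|\cdot\|_\Ic$ (via property~(iii) of the ideal norm), and the $\|\cdot\|_\Ic$--boundedness of $\tau_\Ic$ yields
\[
\tau_\Ic\big(f'(H_0)V\big)=\sum_{k=1}^\infty k\hat f(k)\,\tau_\Ic(H_0^{k-1}V)=\hat f(1)\,\tau_\Ic(V).
\]

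Finally, I would identify $\hat f(1)$ with integration of $f'$ against normalized Haar measure: if $\sigma$ denotes the normalized Haar measure on $\mathbb{T}$, then
\[
\int_{\mathbb{T}}f'(\la)\,\sigma(d\la)=\widehat{f'}(0)=\hat f(1),
\]
because the constant Fourier coefficient of $f'(z)=\sum_{k\ge1}k\hat f(k)z^{k-1}$ is exactly $\hat f(1)$. Therefore the measure $\mu_{H_0,V}:=\tau_\Ic(V)\,\sigma$ satisfies
\[
\int_{\mathbb{T}}f'(\la)\,\mu_{H_0,V}(d\la)=\tau_\Ic(V)\,\hat f(1)=\tau_\Ic\big(f(H_0+V)-f(H_0)\big),
\]
which is a valid choice of spectral shift measure and is manifestly a constant multiple of Haar measure, hence absolutely continuous.

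The only subtlety I expect to need care is justifying the exchange of $\tau_\Ic$ and the infinite sum: this rests on the fact that $f'(H_0)V$ is the $\|\cdot\|_\Ic$--limit of the partial sums (not merely operator--norm limit of $f'(H_0)$ times the fixed $V\in\Ic$), which is immediate from the ideal--norm inequality $\|AV\|_\Ic\le\|A\|\,\|V\|_\Ic$. Everything else is routine once the $\Ic^2$--membership of $H_0^{k-1}V$ has been observed.
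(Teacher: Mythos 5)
Your proof is correct, and it rests on the same mechanism as the paper's: since $\tau_\Ic$ vanishes on $\Ic^2$ and $H_0,V\in\Ic$, every term of degree at least two dies, leaving only $\hat f(1)\,\tau_\Ic(V)$. The packaging, however, is different. The paper does not invoke Theorem~\ref{imt}; it applies Lemma~\ref{l1}(ii) to monomials $f(z)=z^k$, observes that $\tau_\Ic\big(f(H_0+V)-f(H_0)\big)$ equals $a\,\tau_\Ic(V)$ for $f(z)=az$ and $0$ for $k\ge2$, and then compares with the trace formula \eqref{tf2} to read off the analytic moments of the measure, $\int_{\mathbb{T}}z^n\,d\mu_{H_0,V}=\tau_\Ic(V)$ for $n=0$ and $0$ for $n\in\Nats$, identifying it with $\tau_\Ic(V)$ times normalized Haar measure. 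You instead route through the linearization formula of Theorem~\ref{imt} (whose hypotheses are exactly what you assume), expand $f'(H_0)$ in its power series, and verify directly that $\tau_\Ic(V)$ times normalized Haar measure satisfies \eqref{tf2} for every $f\in\Fc$. This shortcut is legitimate and has the minor advantage of exhibiting the candidate measure and checking the formula for all $f\in\Fc$ at once, rather than inferring it from the analytic moments of the (not necessarily unique, under Hypotheses~\ref{cases}(iii)) measure produced by Theorem~\ref{mt}; your justification of the interchange of $\tau_\Ic$ with the series, via $S_NV\to f'(H_0)V$ in $\|\cdot\|_\Ic$ from property (iii) of the ideal norm and the $\|\cdot\|_\Ic$--boundedness of $\tau_\Ic$, is exactly what is needed. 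One may add that $|\tau_\Ic(V)|\le\tau_\Ic\big(|\re(V)|\big)+\tau_\Ic\big(|\im(V)|\big)$, so your choice of measure also respects the total variation bound \eqref{tfe} of Theorem~\ref{mt}.
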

\begin{proof}
Since $\tau_\Ic(H_0^2)=0$ and $\tau_\Ic(H_0V)=0$, we derive from Lemma \ref{l1} \eqref{l1ii} that for $f$ a polynomial,
\begin{align*}
\tau_\Ic\big(f(H_0+V)-f(H_0)\big)
=\begin{cases}a\,\tau_\Ic(V) &\text{ if } f(z)=az\\ 0 &\text{ if } f\in\text{\rm span}\{z^2,z^3,z^4,\ldots\}.\end{cases}
\end{align*}
Comparison with the trace formula \eqref{tf2} gives
\begin{align*}
\int_\mathbb{T}z^n\,d\mu_{H_0,V}(z)
=\begin{cases}\tau_\Ic(V) &\text{ if } n=0\\ 0 &\text{ if } n\in\Nats.\end{cases}
\end{align*}
\end{proof}

For the remainder of this section, we focus on the case
when $\Bc=\BH$, $\Ic=\Lc^{(1,\infty)}$ and $\tau_\Ic=\Tr_\omega$,
and we show that any finite positive measure supported in a compact subset of $\Reals$
is the spectral shift measure for a pair of commuting self-adjoint operators.

\begin{thm}\label{prop:order1example}
Let $\sigma$ be any finite positive measure having bounded support in the real line.
Then there are (commuting) diagonal operators $H_0=H_0^*\in\BH$ and $V=V_0^*\in\Lc^{(1,\infty)}$ such that,
under Hypotheses \ref{cases}(i), we have $\mu_{H_0,V}=\sigma$ in Theorem~\ref{mt}.
\end{thm}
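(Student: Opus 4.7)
The plan is to exhibit commuting diagonal operators $(H_0,V)$ on $\ell^2(\Nats)$ for which $\Tr_\omega(g(H_0)V)$ can be computed by hand, and then apply the linearization of Theorem~\ref{imt} to identify $\mu_{H_0,V}$ with $\sigma$. Fix a compact interval $K\subset\Reals$ containing $\operatorname{supp}(\sigma)$ and set $\tilde\sigma:=\sigma/\sigma(K)$. Partition $\Nats$ into dyadic blocks $B_m=\{2^{m-1},\ldots,2^m-1\}$, $m\ge1$, so that $|B_m|=2^{m-1}$, and within $B_m$ choose sample points $\lambda_1^{(m)},\ldots,\lambda_{|B_m|}^{(m)}\in K$ such that $\frac{1}{|B_m|}\sum_i\delta_{\lambda_i^{(m)}}\to\tilde\sigma$ weakly; e.g., $\lambda_i^{(m)}=F^{-1}(i/|B_m|)$ with $F$ the quantile of $\tilde\sigma$. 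With $\{e_n\}_{n\ge1}$ the standard basis define
\[
H_0e_n=\lambda_{i(n)}^{(m)}e_n,\qquad Ve_n=w_m e_n,\qquad n\in B_m,
\]
where $w_m:=(\sigma(K)\log 2)\,2^{-(m-1)}$. A direct count gives $\sum_{k=1}^{2^M-1}\mu_k(V)=M\sigma(K)\log 2$, so $V\in\Ic=\Lc^{(1,\infty)}$, and $H_0=H_0^*\in\BH$, $V=V^*\in\Ic$, so Hypotheses~\ref{cases}(i) hold.

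The whole problem reduces to the key identity
\begin{equation}\label{eq:plangoal}
\Tr_\omega(g(H_0)V)=\int_Kg\,d\sigma,\qquad(g\in C(K)).
\end{equation}
Granted \eqref{eq:plangoal}, the linearization Theorem~\ref{imt} (applicable because $\Tr_\omega(\Ic^2)=\{0\}$ by the remark following \eqref{eq:KSScond}) yields
\[
\Tr_\omega\bigl(f(H_0+V)-f(H_0)\bigr)=\Tr_\omega\bigl(f'(H_0)V\bigr)=\int f'\,d\sigma
\]
for every $f\in C_c^3(\Reals)$, and the uniqueness clause of Theorem~\ref{mt} identifies $\mu_{H_0,V}$ with $\sigma$.

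The main obstacle in \eqref{eq:plangoal} is that the Dixmier trace uses the \emph{sorted} singular values of $g(H_0)V$, while our construction naturally presents its entries $g(\lambda_i^{(m)})w_m$ in block order, and in general the sorting need not respect blocks. To neutralize this, I plan to reduce by linearity of $\Tr_\omega$ to the pinched case $g\ge0$ with $\min_Kg\ge\tfrac12\max_Kg$: replacing $g$ by $\tilde g:=g+3\|g\|_\infty$ forces $\min\tilde g\ge\tfrac12\max\tilde g$, and the constant shift is reabsorbed using the $g\equiv1$ instance of \eqref{eq:plangoal}, namely $\Tr_\omega(V)=\sigma(K)$. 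Under the pinch, every entry of $g(H_0)V$ in $B_m$ is at least $\min g\cdot w_m$, whereas every entry in $B_{m+1}$ is at most $\max g\cdot w_{m+1}=\max g\cdot w_m/2\le\min g\cdot w_m$, so the blocks separate in magnitude and the singular values can be listed block by block.

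With clean block separation,
\[
\sum_{k=1}^{2^M-1}\mu_k\bigl(g(H_0)V\bigr)=\sum_{m=1}^M w_m\sum_{i=1}^{|B_m|}g(\lambda_i^{(m)}).
\]
Since $w_m|B_m|=\sigma(K)\log 2$ and $\frac{1}{|B_m|}\sum_ig(\lambda_i^{(m)})\to\int g\,d\tilde\sigma$, each block contributes $\log 2\cdot\int g\,d\sigma+o(1)$, summing gives $M\log 2\cdot\int g\,d\sigma+o(M)$, and dividing by $\log(2^M)=M\log 2$ yields convergence to $\int g\,d\sigma$. A routine monotonicity sandwich for $N\in(2^{M-1},2^M)$ absorbs the partial block into an $O(1)$ error in the numerator, so $\frac{1}{\log(N+1)}\sum_{k=1}^N\mu_k(g(H_0)V)$ is a genuinely convergent sequence; evaluating the state $\omega$ on it delivers \eqref{eq:plangoal} and completes the proof.
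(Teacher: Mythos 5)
Your proposal is correct, and it reaches the conclusion by a genuinely different route than the paper, though in the same general spirit. The paper also builds $H_0$ from diagonal blocks whose empirical measures converge weak$^*$ to $\sigma$, but it fixes $V=\diag\big((\tfrac1n)_{n\ge1}\big)$ once and for all; the price is that the weights $\tfrac1n$ vary within each block, so the paper needs the block-growth condition \eqref{eq:piqi} and the separate Lemma~\ref{lem:mut} comparing the reweighted empirical measures with the unweighted ones, and it identifies $\mu_{H_0,V}$ by matching moments, writing $\Tr_\omega\big((H_0+V)^k-H_0^k\big)=k\Tr_\omega(H_0^{k-1}V)$ using only that $\Tr_\omega$ vanishes on $\Ic^2$. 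You instead adapt $V$ to the blocks (constant weight $w_m\propto 2^{-(m-1)}$ on the dyadic block $B_m$, normalized so that each block contributes exactly $\sigma(K)\log 2$), which makes the Ces\`aro--log averaging immediate and removes any need for an analogue of Lemma~\ref{lem:mut} or of the growth condition; you then test against all continuous $g$ via the linearization Theorem~\ref{imt} together with the uniqueness clause of Theorem~\ref{mt}, rather than via moments. Your pinching reduction ($g\mapsto g+3\|g\|_\infty$, reabsorbed through $\Tr_\omega(V)=\sigma(K)$) is a clean, explicit way to control the rearrangement of singular values, a point the paper's harmonic-weight computation treats only implicitly. Two small points, both equally implicit in the paper's own argument, deserve a sentence in a final write-up: the last step requires $\omega$ to agree with the ordinary limit on convergent sequences (i.e., to be a generalized/Banach limit, as in the paper's $\lim_{N\to\omega}$), and to invoke uniqueness in Theorem~\ref{mt} one should note that $\operatorname{supp}\sigma\subseteq\sigma(H_0)\subseteq\Omega$, which follows from the weak$^*$ convergence of the block empirical measures (or, alternatively, observe that the restrictions of $f'$, $f\in C_c^3(\Reals)$, are dense in $C(K\cup\Omega)$, so the two measures agree on all of $K\cup\Omega$).
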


In the proof, we will use the following easy result.
\begin{lemma}\label{lem:mut}
Let $M>0$, $p\in\Nats$ and $\eps>0$.
Let $a_1,\ldots,a_p\in[-M,M]$ and
consider the measures
\[
\mu=\frac1p\sum_{k=1}^p\delta_{a_k},\qquad
\mut=\frac1p\sum_{k=1}^pw_k\delta_{a_k}
\]
where $w_k\in(1-\eps,1+\eps)$ for all $k$ and $\sum_{k=1}^pw_k=p$.
Then for all $f\in C([-M,M])$ we have
\[
\biggl|\int f\,d\mu-\int f\,d\mut\,\biggr|\le\eps\|f\|_\infty\,,
\]
where $\|\cdot\|_\infty$ is the supremum norm on $C([-M,M])$.
\end{lemma}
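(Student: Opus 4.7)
The plan is to observe that the two measures are supported on the same atoms $a_1,\dots,a_p$, so their integrals against $f$ differ only through the weights. Writing
\[
\int f\,d\mu-\int f\,d\mut=\frac1p\sum_{k=1}^p f(a_k)-\frac1p\sum_{k=1}^p w_kf(a_k)=\frac1p\sum_{k=1}^p(1-w_k)f(a_k),
\]
the estimate reduces to a one-line triangle inequality.

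Concretely, I would apply the triangle inequality to the last sum, pull out $\|f\|_\infty$ (using $|f(a_k)|\le\|f\|_\infty$ since each $a_k\in[-M,M]$), and then use the hypothesis $w_k\in(1-\eps,1+\eps)$ to conclude $|1-w_k|<\eps$ for each $k$. This yields
\[
\biggl|\int f\,d\mu-\int f\,d\mut\,\biggr|\le\frac{\|f\|_\infty}{p}\sum_{k=1}^p|1-w_k|\le\frac{\|f\|_\infty}{p}\cdot p\eps=\eps\|f\|_\infty,
\]
as desired.

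There is no real obstacle here: the argument is purely a convex-combination estimate. I note that the additional hypothesis $\sum_{k=1}^p w_k=p$ is not actually needed for this supremum-norm bound; it is presumably included because it ensures $\mut$ is again a probability measure (total mass one), which is the form in which the lemma will be applied in Theorem \ref{prop:order1example}. Were one to try to exploit this cancellation (since $\sum_k(1-w_k)=0$), one could obtain a sharper bound of the form $\eps\,\mathrm{osc}_{[-M,M]}(f)$, but this is not needed for the statement as worded, so I would not pursue it.
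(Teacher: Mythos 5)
Your proposal is correct and coincides with the paper's own one-line argument: both write the difference as $\frac1p\sum_{k=1}^p(1-w_k)f(a_k)$ and conclude via the triangle inequality with $|f(a_k)|\le\|f\|_\infty$ and $|1-w_k|<\eps$. Your side remark about the role of $\sum_k w_k=p$ is accurate but not needed for the bound.
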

\begin{proof}
\[
\biggl|\int f\,d\mu-\int f\,d\mut\,\biggr|=
\biggl|\frac1p\sum_{k=1}^pf(a_k)(1-w_k)\biggr|\le
\frac1p\sum_{k=1}^p|f(a_k)|\,|1-w_k|
\le\eps\|f\|_\infty\,.
\]
\end{proof}

\begin{proof}[Proof of Theorem~\ref{prop:order1example}]
Without loss of generality suppose $\sigma$ is a probability measure.
Let $M>0$ be such that the support of $\sigma$ lies in $[-M,M]$.
The basic idea is simple:  to write $H_0$ as a direct sum of diagonal blocks 
whose spectral measures approximate
better and better $\sigma$, and so that the blocks are small enough that the
variation caused by the weight $\frac1n$ from $V$ makes only small distortions.
In particular, using standard approximation techniques
we can find positive integers $p(1),p(2),\ldots$ and
\[
a^{(i)}_1,a^{(i)}_2,\ldots,a^{(i)}_{p(i)}\in[-M,M]
\]
such that, letting $q(i)=p(1)+p(2)+\cdots+p(i)$, we have
\begin{equation}\label{eq:piqi}
\lim_{i\to\infty}\frac{p(i+1)}{q(i)}=0
\end{equation}
and letting
\[
\mu_i=\frac1{p(i)}\sum_{k=1}^{p(i)}\delta_{a^{(i)}_k},
\]
the sequence $(\mu_i)_{i=1}^\infty$ of measures converges in weak$^*$--topology on $C([-M,M])^*$ to $\sigma$.
Indeed, to find $a^{(i)}_j$ so that the measures $\mu_i$ converge as required without requiring~\eqref{eq:piqi} to hold
is a standard discretization argument, and ensuring~\eqref{eq:piqi} holds can be accomplished by sufficient repetition
of the blocks $a^{(i)}_1,a^{(i)}_2,\ldots,a^{(i)}_{p(i)}$, if necessary.
Let
\[
A^{(i)}=\diag(a^{(i)}_1,\ldots,a^{(i)}_{p(i)})
\]
and consider the diagonal bounded operator
$H_0=A^{(1)}\oplus A^{(2)}\oplus\cdots\in\BH$.
Let $V=\diag((\frac1n)_{n=1}^\infty)$.
Then $V\in\Ic$.
We will show $\mu_{H_0,V}=\sigma$.

For ease of calculation, we will alter the formula for $\Tr_\omega$ by replacing $\log(n+1)$ in the denominator
of~\eqref{eq:trB} by $1+\frac12+\frac13+\cdots+\frac1n$.
Clearly, this alters the value of $\Tr_\omega$ only by a strictly positive multiplicative constant.

Since $\Tr_\omega$ vanishes on $\Ic^2$,
we have
\begin{align*}
\int(k\lambda^{k-1})\,d\mu_{H_0,V}(\lambda)&=
\Tr_\omega((H_0+V)^k-H_0^k)=k\Tr_\omega(H_0^{k-1}V)\\
&=k\lim_{N\to\omega}\frac1{1+\frac12+\cdots+\frac1N}\sum_{j=1}^N\frac1j\,b_j^{k-1},
\end{align*}
where
\[
(b_1,b_2,\ldots)=(a^{(1)}_1,\ldots,a^{(1)}_{p(1)},a^{(2)}_1,\ldots,a^{(2)}_{p(2)},\ldots).
\]
Since the supports of $\mu_{H_0,V}$ and $\sigma$ are bounded, it will suffice to show
\[
\int\lambda^{k-1}\,d\mu_{H_0,V}(\lambda)=\int\lambda^{k-1}\,d\sigma(\lambda)
\]
for all $k\in\Nats$,
and we will actually prove the stronger statement
\begin{equation}\label{eq:limsig}
\lim_{N\to\infty}\frac1{1+\frac12+\cdots+\frac1N}\sum_{j=1}^N\frac1j\,b_j^{k-1}
=\int\lambda^{k-1}\,d\sigma(\lambda).
\end{equation}
Let
\[
s(i)=\sum_{j=1}^{p(i)}\frac1{q(i-1)+j}\,.
\]
Let $N\in\Nats$ and let $l\ge1$ be such that
$q(l)<N\le q(l+1)$.
Then
\[
1+\frac12+\cdots+\frac1N=s(1)+s(2)+\cdots+s(l)+e,
\]
where
$e=\sum_{j=1}^{N-q(l)}\frac1{q(l)+j}$.
We have
\begin{align*}
&\frac1{1+\frac12+\cdots+\frac1N}\sum_{j=1}^N\frac1j\,b_j^{k-1} \\
&\quad=\frac1{s(1)+\cdots+s(l)+e}\left(
\sum_{i=1}^l\sum_{j=1}^{p(i)}\frac1{q(i-1)+j}(a^{(i)}_j)^{k-1}
+\sum_{j=1}^{N-q(l)}\frac1{q(l)+j}(a_j^{(l+1)})^{k-1}\right) \\
&\quad=\frac1{s(1)+\cdots+s(l)+e}\left(
\sum_{i=1}^ls(i)\int\lambda^{k-1}\,d\mut_i(\lambda)
+e\int\lambda^{k-1}\,d\eta_N(\lambda)\right),
\end{align*}
for probability measures
\begin{align*}
\mut_i&=\frac1{s(i)}\sum_{j=1}^{p(i)}\left(\frac1{q(i-1)+j}\right)\delta_{a^{(i)}_j} \\
\eta_N& 
=\frac1e\sum_{j=1}^{N-q(l)}\left(\frac1{q(l)+j}\right)\delta_{a^{(l+1)}_j.}
\end{align*}
Thus,
\[
\frac1{1+\frac12+\cdots+\frac1N}\sum_{j=1}^N\frac1j\,b_j^{k-1}=\int\lambda^{k-1}\,d\rho_N(\lambda),
\]
where $\rho_N$ is the convex combination
\begin{equation}\label{eq:conv}
\rho_N=\sum_{i=1}^l\left(\frac{s(i)}{s(1)+\cdots+s(l)+e}\right)\mut_i+\left(\frac e{s(1)+\cdots+s(l)+e}\right)\eta_N\,.
\end{equation}
Also, we have $e\le p(l+1)/q(l)$, so by~\eqref{eq:piqi}, choosing $N$ sufficiently large ensures that $e$ is arbitrarily small,
and, thus, $e/(s(1)+\cdots+s(l)+e)$ is arbitrarily small.

Now let us examine the measures $\mut_i$.
Setting $q(0)=0$,
we have
\[
\mut_i=\frac1{p(i)}\sum_{j=1}^{p(i)}w^{(i)}_j\delta_{a^{(i)}_j}
\]
where
\[
w^{(i)}_j=\frac{p(i)}{s(i)(q(i-1)+j)}\,,
\]
so that $\sum_{j=1}^{p(i)}w^{(i)}_j=p(i)$
and $w^{(i)}_1\ge w^{(i)}_2\ge\cdots\ge w^{(i)}_{p(i)}>0$.
But
\[
1\le\frac{w^{(i)}_1}{w^{(i)}_{p(i)}}=\frac{q(i)}{q(i-1)+1}<\frac{q(i)}{q(i-1)}=1+\frac{p(i)}{q(i-1)}
\]
and by the condition~\eqref{eq:piqi}, the right--hand--side tends to $1$ as $i\to\infty$.
Since the average of the positive numbers $w^{(i)}_1,\ldots,w^{(i)}_{p(i)}$ equals $1$, we have $w^{(i)}_1\ge1\ge w^{(i)}_{p(i)}$;
since for $\delta_1\ge0$ and $1>\delta_2\ge0$ we have
\[
\frac{1+\delta_1}{1-\delta_2}-1=\frac{\delta_1+\delta_2}{1-\delta_2}\ge\max\{\delta_1,\delta_2\},
\]
from $\lim_{i\to\infty}w^{(i)}_1/w^{(i)}_{p(i)}=1$
we get
\[
\lim_{i\to\infty}\max_{1\le j\le p(i)}|1-w^{(i)}_j|=0.
\]
Consequently, applying Lemma~\ref{lem:mut},
the sequence $\mut_i$ of measures converges as $i\to\infty$ in weak$^*$--topology on $C([-M,M])$ to $\sigma$.
Now, since $s(1)+\cdots+s(l)+e$ diverges to $\infty$ as $N\to\infty$ and, as noted before, $e\to0$ as $N\to\infty$,
from~\eqref{eq:conv} we see that $\rho_N$
converges as $N\to\infty$ in weak$^*$--topology on $C([-M,M])$ to $\sigma$.
This yields~\eqref{eq:limsig}, as desired.
\end{proof}

\section{Second order spectral shift measures}
\label{sec:2ndOrd}

The goals of this section are to establish the trace formula \eqref{2tf}
and a more general version,
as well as some properties of the second order spectral shift measure.
The first goal will be accomplished in Theorem \ref{2mt}, which holds under the hypotheses below.
Again, we work with a normed ideal $\Ic$
of a $\sigma$--finite, semifinite von Neumann algebra factor $\Bc$,
with ideal norm denoted $\|\cdot\|_\Ic$, and
endowed with a trace $\tau_\Ic:\Ic\to\Complex$ that is positive and $\|\cdot\|_\Ic$--bounded,
but we also  assume the following.
\begin{hyp}
\label{root}
$\Ic^{1/2}$ is a normed ideal with ideal norm $\|\cdot\|_{\Ic^{1/2}}$ and
the inequality~\eqref{eq:I12} holds.
\end{hyp}
By Proposition~\ref{prop:Malpha}, many Marcinkiewicz ideals, on which Dixmier traces are defined, satisfy Hypotheses \ref{root}.


\begin{hyp}
\label{2cases}
Consider a set $\Omega$, a closed, densely defined operator $H_0$ affiliated with $\Bc$,
$V\in\Ic^{1/2}$ and a set $\Fc$ of functions
that satisfy one of the following assertions:
\begin{enumerate}[(i)]
\item \label{2ii} $\Omega=\Reals$, $H_0$ and $H_0+V$ are dissipative, and
\[
\Fc=\text{\rm span}\left\{\la\mapsto (z-\la)^{-k}:\, k\in\Nats,\, \im(z)<0\right\};
\]
\item \label{2iii} $\Omega=\mathbb{T}$, $\|H_0\|\leq 1$, $\|H_0+V\|\leq 1$, and
$\Fc$ is the set of all functions that are
analytic on discs centered at $0$ and of radius strictly larger than $1$.
\end{enumerate}
\end{hyp}

\begin{thm}
\label{2mt} Assume Hypotheses \ref{ideal} and \ref{root}.
Let $\Omega$, $H_0$, $V$ and $\Fc$ satisfy Hypotheses \ref{2cases}.
Then, there exists a (countably additive, complex)
measure $\nu_{H_0,V}$ on $\Omega$ such that for every $f\in\Fc$, the trace formula
\begin{align}
\label{2tftau}
\tau_\Ic\left(f(H_0+V)-f(H_0)-\frac{d}{dt}\bigg|_{t=0}f(H_0+tV)\right)=\int_\Omega f''(\la)\,\nu_{H_0,V}(d\la)
\end{align}
holds.
Moreover, the total variation of $\nu_{H_0,V}$
is bounded as follows:
\begin{align}
\label{2tfe}
\big\|\nu_{H_0,V}\big\|\leq \frac12\,\tau_\Ic(|V|^2).
\end{align}
\end{thm}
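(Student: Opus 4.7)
The plan is to follow the architecture of the proof of Theorem \ref{mt}, replacing the first-order Taylor expansion by the second-order one. The starting identity is the integral form of Taylor's remainder,
\[
f(H_0+V)-f(H_0)-\frac{d}{dt}\bigg|_{t=0}f(H_0+tV)=\int_0^1(1-t)\,\frac{d^2}{dt^2}f(H_0+tV)\,dt,
\]
which will be justified in both cases \ref{2cases}\eqref{2ii} and \eqref{2iii} by expressions for $\frac{d^2}{dt^2}f(H_0+tV)$ obtained by iterating Lemma \ref{l1}. Once a suitable bound on $\tau_\Ic\bigl(\frac{d^2}{dt^2}f(H_0+tV)\bigr)$ is available, the prefactor $\int_0^1(1-t)\,dt=\tfrac12$ will produce the constant in~\eqref{2tfe}.

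Next I would compute the second derivative. Under Hypotheses \ref{2cases}\eqref{2ii}, it suffices to consider $f(\lambda)=(z-\lambda)^{-k}$ with $\im(z)<0$, and iterating Lemma \ref{l1}\eqref{l1i} yields a finite sum of triple products
\[
(z-H_0-tV)^{-k_0}V(z-H_0-tV)^{-k_1}V(z-H_0-tV)^{-k_2},\qquad k_0+k_1+k_2=k+2,\ k_i\ge1.
\]
Under Hypotheses \ref{2cases}\eqref{2iii}, an analogous expansion for polynomials in $H_0+tV$ is obtained by iterating Lemma \ref{l1}\eqref{l1ii} and then passing to the power series of $f$ as in Lemma \ref{l1a}\eqref{l1aii}. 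In both cases two factors of $V\in\Ic^{1/2}$ appear, so by Hypothesis \ref{root} together with property (iii) of Definition \ref{def:idealnorm}, each summand lies in $\Ic$ with ideal norm controlled by $\|V\|_{\Ic^{1/2}}^2$, and the Bochner (or norm-convergent) representation is valid.

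The main obstacle is the sharp trace estimate
\[
\Bigl|\tau_\Ic\Bigl(\frac{d^2}{dt^2}f(H_0+tV)\Bigr)\Bigr|\le\|f''\|_{L^\infty(\Omega)}\,\tau_\Ic(|V|^2),\qquad t\in[0,1].
\]
Following the strategy of Theorem \ref{l4}, I would replace $H_0+tV$ by its self-adjoint (case \eqref{2ii}) or unitary (case \eqref{2iii}) dilation via Proposition \ref{prop:Adil} or Proposition \ref{prop:Udil}, extend $\Ic$, $\|\cdot\|_\Ic$ and $\tau_\Ic$ to the dilated algebra by Proposition \ref{prop:extend}, and thereby reduce matters to the self-adjoint or unitary case. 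In that reduced setting, cyclicity of $\tau_\Ic$ allows one to rearrange the triple products so that the two copies of $V$ end up adjacent (or separated by a single spectral factor bounded by $\|f''\|_\infty$), after which the Cauchy--Schwarz inequality \eqref{eq:CS} (legitimate because $V\in\Ic^{1/2}$) converts the expression into a bound in terms of $\tau_\Ic(|V|^2)$. This is the analogue, adapted to a $\|\cdot\|_\Ic$-bounded trace, of the double operator integral representation $\frac{d^2}{dt^2}\big|_0 f(X+tV)=2\,T_{f^{[2]}}(V,V)$ for self-adjoint $X$.

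Finally I would conclude exactly as in the proof of Theorem \ref{mt}. Continuity of $t\mapsto\tau_\Ic\bigl(\frac{d^2}{dt^2}f(H_0+tV)\bigr)$ follows from the resolvent identity (or the power-series expansion) combined with the submultiplicativity~\eqref{eq:I12}, which justifies differentiating under $\tau_\Ic$ and legitimizes the Taylor integral above. Integrating the key estimate against $(1-t)\,dt$ gives
\[
\Bigl|\tau_\Ic\Bigl(f(H_0+V)-f(H_0)-\tfrac{d}{dt}\big|_{t=0}f(H_0+tV)\Bigr)\Bigr|\le\tfrac12\|f''\|_{L^\infty(\Omega)}\,\tau_\Ic(|V|^2).
\]
The linear functional $f''\mapsto\tau_\Ic(\text{remainder})$, defined on the space of second derivatives of functions in $\Fc$, is then bounded in the sup-norm on $C_0(\Omega)$; extending by Hahn--Banach and applying the Riesz representation theorem produces the finite complex Borel measure $\nu_{H_0,V}$ on $\Omega$ satisfying~\eqref{2tftau} with the asserted total variation bound~\eqref{2tfe}.
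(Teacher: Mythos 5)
Your proposal is correct and follows essentially the same route as the paper: the integral Taylor remainder with kernel $(1-t)$ (the paper's Lemma~\ref{ir}), the second-derivative expansions from iterating Lemma~\ref{l1} (Lemma~\ref{2l1}), the key estimate $\bigl|\tau_\Ic\bigl(\tfrac{d^2}{dt^2}f(H_0+tV)\bigr)\bigr|\le\|f''\|_{L^\infty(\Omega)}\tau_\Ic(|V|^2)$ obtained via dilations and the divided-difference/double-operator-integral structure with Cauchy--Schwarz (Theorem~\ref{2l4}, whose partition-level form is Lemma~\ref{2l3} combined with $\|f^{[2]}\|_\infty\le\frac12\|f''\|_\infty$), and finally Hahn--Banach plus Riesz representation. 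The only point left implicit in your sketch is that the Cauchy--Schwarz bound must be applied at the level of the spectral partition sums (as in Lemma~\ref{2l3}) after resumming into $f^{[2]}$, not termwise on the individual triple products, but your invocation of $2\,T_{f^{[2]}}(V,V)$ indicates exactly this mechanism.
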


The proof is based on the set of lemmas below.

By evaluating the derivatives of expressions in Lemmas \ref{l1} and \ref{l1a} \eqref{l1aii}, we obtain the following.

\begin{lemma}\label{2l1} Assume Hypotheses \ref{ideal} and \ref{root}.
\begin{enumerate}[(i)]
\item \label{2l1i}
Let $H_0$ be
be affiliated with $\Bc$ and $V\in\Bc$; let $H_t:=H_0+tV$.
Then, for $z\in\Complex$ such that $\sup\limits_{t\in [0,1]}\|(zI-H_t)^{-1}\|<\infty$ and for $k\in\Nats$,  $t_0\in [0,1]$,
\begin{align*}
\frac{d^2}{dt^2}\bigg|_{t=t_0}\big((zI-H_t)^{-k}\big)=2\sum_{\substack{1\leq k_0,k_1,k_2\leq k\\k_0+k_1+k_2=k+2}}(zI-H_{t_0})^{-k_0}V(zI-H_{t_0})^{-k_1}V(zI-H_{t_0})^{-k_2}.
\end{align*}
\item \label{2l1ii}
Let $H_0,V\in\Bc$. Then, for $k\in\Nats$, $t_0\in [0,1]$,
\begin{align*}
\frac{d^2}{dt^2}\bigg|_{t=t_0}\big(H_t^k\big)=2\sum_{\substack{0\leq k_0,k_1,k_2\\k_0+k_1+k_2=k-2}}H_{t_0}^{k_0}V H_{t_0}^{k_1}V H_{t_0}^{k_2}.
\end{align*}
\item \label{2l1iii}
Let $H_0\in\Bc$, $\|H_0\|\leq 1$, $V\in\Ic^{1/2}$, and $\|H_0+V\|\leq 1$. Then, for every $f$ analytic on a disc of radius $r>1$ centered at $0$ and $t_0\in [0,1]$,
\[
\frac{d^2}{dt^2}\bigg|_{t=t_0}f(H_t)
=2\sum_{k=2}^\infty \hat f(k) \sum_{\substack{0\leq k_0,k_1,k_2\\k_0+k_1+k_2=k-2}}H_{t_0}^{k_0}VH_{t_0}^{k_1}V H_{t_0}^{k_2},\]
where the series converges in $\|\cdot\|_\Ic$.
\end{enumerate}
\end{lemma}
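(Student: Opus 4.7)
The plan is to obtain Lemma \ref{2l1} by differentiating, a second time, the first-order expressions already supplied by Lemmas \ref{l1} and \ref{l1a}. Parts (i) and (ii) are essentially algebraic iterations, while part (iii) reduces to a convergence estimate that hinges on Hypothesis \ref{root}.

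For part (i), I start from the identity
\[
\frac{d}{dt}\bigg|_{t=t_0}(zI-H_t)^{-k} = \sum_{\substack{1 \le k_0, k_1 \le k \\ k_0 + k_1 = k+1}} (zI - H_{t_0})^{-k_0}\, V\, (zI - H_{t_0})^{-k_1}
\]
from Lemma \ref{l1}\eqref{l1i} and apply $d/dt$ once more, using the Leibniz rule on each summand and invoking Lemma \ref{l1}\eqref{l1i} again to differentiate each factor $(zI-H_t)^{-k_j}$. After expansion, every triple $(k_0,k_1,k_2)$ with $k_0+k_1+k_2 = k+2$ and $1 \le k_j \le k$ arises from exactly two contributions --- one when the left resolvent factor is differentiated and one when the right one is --- which produces the factor $2$ in the stated expression. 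Part (ii) is proved in the same way, starting from the Leibniz-type expression for $\frac{d}{dt}H_t^k$ in Lemma \ref{l1}\eqref{l1ii}, with the obvious change that the summation indices run $k_j \ge 0$ and $k_0+k_1+k_2 = k-2$.

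For part (iii), I differentiate the series $f(H_t) = \sum_{k \ge 0} \hat{f}(k)\, H_t^k$ termwise, applying part (ii) to each summand. The delicate point is convergence in $\|\cdot\|_\Ic$. Since $V \in \Ic^{1/2}$ and $\|H_{t_0}\| \le 1$, Hypothesis \ref{root} combined with property (iii) of Definition \ref{def:idealnorm} gives
\[
\|H_{t_0}^{k_0}\, V\, H_{t_0}^{k_1}\, V\, H_{t_0}^{k_2}\|_\Ic
\le \|H_{t_0}^{k_0} V\|_{\Ic^{1/2}}\; \|H_{t_0}^{k_1} V H_{t_0}^{k_2}\|_{\Ic^{1/2}}
\le \|V\|_{\Ic^{1/2}}^{\,2}.
\]
There are $\binom{k}{2}$ triples with $k_j \ge 0$ summing to $k-2$, so the full second-derivative series is dominated in $\|\cdot\|_\Ic$ by $2\|V\|_{\Ic^{1/2}}^{\,2} \sum_{k \ge 2} \binom{k}{2}\, |\hat f(k)|$, which converges because $f$ is analytic on a disc of radius strictly larger than $1$ and so $|\hat f(k)|$ decays geometrically. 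The bound being uniform in $t_0 \in [0,1]$ justifies differentiating the series for $f(H_t)$ term by term a second time, yielding the stated formula.

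The main obstacle I anticipate lies precisely in the estimate in part (iii): one cannot simply bound $\|VAV\|_\Ic$ by $\|V\|_\Ic \|A\| \|V\|_\Ic$, because $V$ is assumed only to lie in $\Ic^{1/2}$, not in $\Ic$. It is essential to invoke the H\"older-like inequality $\|AB\|_\Ic \le \|A\|_{\Ic^{1/2}} \|B\|_{\Ic^{1/2}}$ supplied by Hypothesis \ref{root}, together with the ideal-norm property of $\|\cdot\|_{\Ic^{1/2}}$, to absorb the bounded factors $H_{t_0}^{k_j}$ without sacrificing summability in $k$. Parts (i) and (ii) are essentially algebraic and, modulo the combinatorial bookkeeping sketched above, present no analytic difficulty.
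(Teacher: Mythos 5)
Your proposal is correct and follows essentially the same route as the paper, which simply obtains the lemma by differentiating the first-order expressions of Lemmas \ref{l1} and \ref{l1a}\eqref{l1aii} once more. Your added details — the double-counting argument producing the factor $2$ and the use of the H\"older-type inequality \eqref{eq:I12} together with the ideal property of $\|\cdot\|_{\Ic^{1/2}}$ to dominate the series in part (iii) — are exactly the right justifications, which the paper leaves implicit.
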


Note that if $H_0$, $V$, and $\Fc$ satisfy Hypotheses \ref{2cases}, then for every $f\in\Fc$,
\[R_{H_0,V}(f):=f(H_0+V)-f(H_0)-\frac{d}{dt}\bigg|_{t=0}f(H_0+tV)\] is an element of $\Ic$.
This follows from Lemmas \ref{l1} and \ref{l1a}
and can be obtained similarly to \eqref{expl1} and \eqref{expl2} in the proof of the lemma below.

\begin{lemma}\label{lem:unifconts} Assume Hypotheses \ref{ideal} and \ref{root}.
If $H_0$, $V$, and $f\in\Fc$ are as in one of the cases of Hypotheses~\ref{2cases}, then
the function
\[
t\mapsto \frac{d}{ds}\bigg|_{s=t}f(H_0+sV)-\frac{d}{ds}\bigg|_{s=0}f(H_0+sV)
\]
is uniformly continuous and, hence, Bochner
integrable on $[0,1]$ with respect to $\|\cdot\|_\Ic$.
\end{lemma}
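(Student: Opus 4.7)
The plan is to show that the map $t\mapsto\Delta(t):=\frac{d}{ds}|_{s=t}f(H_0+sV)-\frac{d}{ds}|_{s=0}f(H_0+sV)$ is actually Lipschitz with respect to $\|\cdot\|_\Ic$ on $[0,1]$. Lipschitz continuity implies uniform continuity, from which Bochner integrability (on the compact interval $[0,1]$) is immediate, since a uniformly continuous function into a Banach space is norm--bounded, strongly measurable, and separably valued.

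The key observation, common to both cases of Hypotheses~\ref{2cases}, is that the explicit formulas of Lemmas~\ref{l1}\eqref{l1i} and \ref{l1a}\eqref{l1aii} express $\frac{d}{ds}|_{s=t}f(H_0+sV)$ as a (finite or convergent) sum of terms each containing exactly one factor of $V\in\Ic^{1/2}$, but the \emph{difference} of two such derivatives, after telescoping, contains \emph{two} factors of $V$ in each term. By Hypothesis~\ref{root} and inequality~\eqref{eq:I12}, any such product lies in $\Ic$ with $\|\cdot\|_\Ic$--norm controlled by $\|V\|_{\Ic^{1/2}}^2$ times a bound on the intervening operators. In case~\eqref{2iii}, writing $H_t:=H_0+tV$, the identity $H_{t_1}^j-H_{t_2}^j=(t_1-t_2)\sum_{i}H_{t_1}^iVH_{t_2}^{j-1-i}$ applied to each $H_{t_1}^{k_0}VH_{t_1}^{k_1}-H_{t_2}^{k_0}VH_{t_2}^{k_1}$ produces $O(k^2)$ terms, each a product of contractions with two factors of $V$, of $\|\cdot\|_\Ic$--norm at most $|t_1-t_2|\,\|V\|_{\Ic^{1/2}}^2$. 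Weighting by $\hat f(k)$ and summing, the analyticity of $f$ on a disc of radius strictly greater than $1$ gives absolute convergence of $\sum_k|\hat f(k)|k^3$, yielding $\|\Delta(t_1)-\Delta(t_2)\|_\Ic\le C|t_1-t_2|$. In case~\eqref{2ii}, by linearity it suffices to treat $f(\lambda)=(z-\lambda)^{-k}$ with $\im(z)<0$; the first--order resolvent identity $(zI-H_{t_1})^{-1}-(zI-H_{t_2})^{-1}=(t_1-t_2)(zI-H_{t_1})^{-1}V(zI-H_{t_2})^{-1}$ plays the role of the telescoping and again creates a second factor of $V$, while dissipativity yields the uniform bound $\sup_{t\in[0,1]}\|(zI-H_t)^{-1}\|\le |\im(z)|^{-1}$, so $\|\Delta(t_1)-\Delta(t_2)\|_\Ic=O(|t_1-t_2|)$ follows from the finite sum in Lemma~\ref{l1}\eqref{l1i}.

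The main obstacle is the combinatorial bookkeeping in case~\eqref{2iii}: one must verify that, after expanding each summand with the telescoping identity, the resulting double series converges absolutely in $\|\cdot\|_\Ic$, which requires matching the $O(k^3)$ factor from combinatorial expansion against the decay of $\hat f(k)$ guaranteed by analyticity on a disc of radius $r>1$. In case~\eqref{2ii} no convergence issue arises (the sums are finite), and the dissipative bound on the resolvents is all that is needed.
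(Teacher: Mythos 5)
Your proposal is correct and follows essentially the same route as the paper: telescope the derivative formulas from Lemmas \ref{l1} and \ref{l1a}\eqref{l1aii} so that the difference of G\^ateaux derivatives carries two factors of $V$, then bound in $\|\cdot\|_\Ic$ via Hypothesis \ref{root} and \eqref{eq:I12}, using the dissipative resolvent bound in case \eqref{2ii} and absolute convergence of the $\hat f(k)$--series in case \eqref{2iii}. The only (harmless) differences are that you obtain Lipschitz continuity directly with an explicit constant (your $k^3$ count can be sharpened to $k(k-1)$), whereas the paper proves continuity termwise for monomials and resolvent powers and then passes to the analytic/rational case by uniform convergence.
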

\begin{proof}
We will first demonstrate this in case $f(\la)=\la^k$, $k\in\Nats$. Applying Lemma \ref{l1} \eqref{l1ii} gives
\begin{align}
\label{expl1}
\nonumber
&\frac{d}{ds}\bigg|_{s=t}(H_0+sV)^k-\frac{d}{ds}\bigg|_{s=0}(H_0+sV)^k\\
&\quad=\sum_{\substack{0\leq k_0,k_1\\k_0+k_1=k-1}}(H_0+tV)^{k_0}V(H_0+tV)^{k_1}
-\sum_{\substack{0\leq k_0,k_1\\k_0+k_1=k-1}}H_0^{k_0}VH_0^{k_1},
\end{align}
which equals
\begin{align*}
&\sum_{\substack{0\leq k_0,k_1\\k_0+k_1=k-1}}(H_0+tV)^{k_0}V(H_0+tV)^{k_1}
-\sum_{\substack{0\leq k_0,k_1\\k_0+k_1=k-1}}H_0^{k_0}V(H_0+tV)^{k_1}\\
&\quad+\sum_{\substack{0\leq k_0,k_1\\k_0+k_1=k-1}}H_0^{k_0}V(H_0+tV)^{k_1}
-\sum_{\substack{0\leq k_0,k_1\\k_0+k_1=k-1}}H_0^{k_0}VH_0^{k_1}.
\end{align*}
By Lemma \ref{l1} \eqref{l1ii}, the latter equals
\begin{align}
\label{expl2}
\nonumber
&t\sum_{\substack{0\leq k_0,k_1\\k_0+k_1=k-1}}\;\sum_{\substack{0\leq i_0,i_1\\ii_0+i_1=k_0-1}}
(H_0+tV)^{i_0}VH_0^{i_1}V(H_0+tV)^{k_1}\\
&\quad+t\sum_{\substack{0\leq k_0,k_1\\k_0+k_1=k-1}}\;\sum_{\substack{0\leq j_0,j_1\\j_0+j_1=k_1-1}}
H_0^{k_0}V(H_0+tV)^{j_0}VH_0^{j_1}.
\end{align}
Since $t\mapsto H_0+tV$ is uniformly continuous on $[0,1]$ in the operator norm (this can be derived from Lemma \ref{l1} \eqref{l1ii}) and $V\in\Ic^{1/2}$, we obtain continuity of the function $t\mapsto \frac{d}{ds}\big|_{s=t}(H_0+sV)^k-\frac{d}{ds}\big|_{s=0}(H_0+sV)^k$ in $\|\cdot\|_{\Ic}$.
Now the uniform continuity for analytic functions as in Hypotheses~\ref{2cases}(ii) follows by norm estimates~\eqref{eq:I12}
and uniform convergence.

The case of functions $\lambda\mapsto(z-\lambda)^{-k}$ for $k\in\Nats$ and $\im(z)<0$,
can be proved similarly.
\end{proof}

\begin{lemma} Assume Hypotheses \ref{ideal} and \ref{root}.
\label{ir} If $H_0$, $V$, and $\Fc$ satisfy Hypotheses \ref{2cases}, then for every $f\in\Fc$,
\[
\tau_\Ic\big(R_{H_0,V}(f)\big)=\int_0^1 (1-t)\tau_\Ic\left(\frac{d^2}{dt^2} f(H_0+tV)\right)\,dt.
\]
\end{lemma}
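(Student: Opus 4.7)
My plan is to apply Taylor's theorem with integral remainder to the scalar function $F(t):=\tau_\Ic\big(f(H_0+tV)-f(H_0)\big)$ on $[0,1]$. The identity we want to prove is
\[
F(1)-F(0)-F'(0)=\int_0^1(1-t)F''(t)\,dt,
\]
which by Lemma~\ref{l6} and the desired formula $F''(t)=\tau_\Ic\big(\frac{d^2}{dt^2}f(H_0+tV)\big)$ will match exactly the claim $\tau_\Ic(R_{H_0,V}(f))=\int_0^1(1-t)\tau_\Ic\big(\frac{d^2}{dt^2}f(H_0+tV)\big)\,dt$. The identity above is of course just integration by parts once one knows $F\in C^2([0,1])$; equivalently, one may verify it directly via the fundamental theorem of calculus twice.

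The first derivative is already handled: Lemma~\ref{l6} yields $F'(t)=\tau_\Ic\!\left(\frac{d}{dt}f(H_0+tV)\right)$ for all $t\in[0,1]$. The nontrivial work is at the second order. First, I would observe that, in each of the cases of Hypotheses~\ref{2cases}, Lemma~\ref{2l1} gives explicit formulas expressing $\frac{d^2}{dt^2}\big|_{t=s}f(H_0+sV)$ as sums (or, for the analytic case, $\|\cdot\|_\Ic$-norm-convergent series) of products of the form $X_0 V X_1 V X_2$ with $X_i\in\Bc$ uniformly bounded. Since $V\in\Ic^{1/2}$, the Hypothesis~\ref{root} inequality \eqref{eq:I12} gives that each such term lies in $\Ic$ with $\|\cdot\|_\Ic$-norm controlled by $\|V\|_{\Ic^{1/2}}^2$, and uniform operator-norm continuity of $s\mapsto H_0+sV$ (respectively, of the resolvents $(z-H_0-sV)^{-1}$ in case~\ref{2cases}(i)) yields continuity of $s\mapsto\frac{d^2}{dt^2}\big|_{t=s}f(H_0+sV)$ from $[0,1]$ to $(\Ic,\|\cdot\|_\Ic)$; this is essentially the same continuity argument used in the proof of Lemma~\ref{lem:unifconts}, just applied one order higher.

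With continuity in hand, the Bochner-valued fundamental theorem of calculus gives
\[
\frac{d}{dt}\bigg|_{t=s}f(H_0+tV)-\frac{d}{dt}\bigg|_{t=0}f(H_0+tV)=\int_0^s\frac{d^2}{dt^2}\bigg|_{t=r}f(H_0+tV)\,dr,
\]
where both sides are in $\Ic$; the identity can be verified by checking that both sides vanish at $s=0$ and have the same derivative in $\|\cdot\|_\Ic$-norm. Applying $\tau_\Ic$ and using $\|\cdot\|_\Ic$-boundedness of $\tau_\Ic$ to pull it inside the Bochner integral, I conclude, via Lemma~\ref{l6},
\[
F'(s)-F'(0)=\int_0^s\tau_\Ic\!\left(\frac{d^2}{dt^2}\bigg|_{t=r}f(H_0+tV)\right)dr.
\]
The integrand is continuous in $r$, so $F'$ is $C^1$ and $F''(r)=\tau_\Ic\!\big(\frac{d^2}{dt^2}\big|_{t=r}f(H_0+tV)\big)$.

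Finally, with $F\in C^2([0,1])$ in place, integration by parts gives
\[
\int_0^1(1-t)F''(t)\,dt=\bigl[(1-t)F'(t)\bigr]_0^1+\int_0^1 F'(t)\,dt=-F'(0)+F(1)-F(0)=\tau_\Ic\bigl(R_{H_0,V}(f)\bigr),
\]
which is exactly the claim. The main technical obstacle is the justification that $\frac{d^2}{dt^2}f(H_0+tV)$ is $\|\cdot\|_\Ic$-norm continuous in $t$ (especially in case~\ref{2cases}(iii), where one must control the tail of the series via \eqref{eq:I12}); once that is in place, everything else is routine calculus with Bochner integrals.
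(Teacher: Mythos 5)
Your overall calculus scheme (fundamental theorem of calculus at second order, then integration by parts against $(1-t)$) is the same as the paper's, but there is a genuine gap in how you set it up: the auxiliary function $F(t):=\tau_\Ic\big(f(H_0+tV)-f(H_0)\big)$ need not be defined in the setting of Lemma~\ref{ir}. Under Hypotheses~\ref{2cases} one only has $V\in\Ic^{1/2}$, and $\Ic^{1/2}$ is strictly larger than $\Ic$ in general (the whole point of the second-order results, as in the Koplienko/Hilbert--Schmidt situation). The operators $f(H_0+tV)-f(H_0)$ and $\frac{d}{dt}f(H_0+tV)$ are sums of terms containing a \emph{single} factor of $V$, hence lie in $\Ic^{1/2}$ but typically not in $\Ic$, which is the domain of $\tau_\Ic$. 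For the same reason you cannot invoke Lemma~\ref{l6}: it is stated and proved under Hypotheses~\ref{cases}, which require $V\in\Ic$. Consequently $F$, $F'(t)$, $F'(0)$, and the boundary term $\bigl[(1-t)F'(t)\bigr]_0^1$ in your final integration by parts are all potentially meaningless, and your identity $F(1)-F(0)-F'(0)=\tau_\Ic(R_{H_0,V}(f))$ implicitly splits $\tau_\Ic(R_{H_0,V}(f))$ into traces of operators that need not be traceable.

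The paper avoids this by never separating the second-order remainder into first-order pieces. It shows (via the two-$V$-factor formulas \eqref{expl1}--\eqref{expl2} in Lemma~\ref{lem:unifconts} and the H\"older-type inequality \eqref{eq:I12}) that the \emph{combined} quantity $s\mapsto \frac{d}{ds}f(H_0+sV)-\frac{d}{dt}\big|_{t=0}f(H_0+tV)$ is $\Ic$-valued and uniformly continuous in $\|\cdot\|_\Ic$, represents $R_{H_0,V}(f)$ as its Bochner integral over $[0,1]$ (checked weakly, against bounded functionals on $\Bc$), pulls $\tau_\Ic$ inside, and then integrates by parts the scalar function $s\mapsto\tau_\Ic\big(\frac{d}{ds}f(H_0+sV)-\frac{d}{dt}\big|_{t=0}f(H_0+tV)\big)$, which vanishes at $s=0$ and whose derivative is $\tau_\Ic\big(\frac{d^2}{ds^2}f(H_0+sV)\big)$ by Lemmas~\ref{l1} and \ref{2l1}. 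Your argument can be repaired along exactly these lines by replacing $F'(s)-F'(0)$ throughout with this single $\Ic$-valued (respectively scalar) difference; your second-order continuity observation and the final integration by parts then go through. As written, however, the proposal rests on undefined quantities and on a lemma whose hypotheses are not met.
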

\begin{proof}
Using continuity in the operator norm, which follows from~\eqref{eq:I12} and Lemma~\ref{lem:unifconts},
and the fundamental theorem of calculus,
one can verify that for every bounded linear functional $\phi$ on $\Bc$, we have
\[\phi\big(R_{H_0,V}(f)\big)=\phi\left(\int_0^1\left(\frac{d}{ds}f(H_0+sV)-\frac{d}{dt}\bigg|_{t=0} f(H_0+tV)\right)\,ds\right)\]
(where we pull $\phi$ through the derivative in $\frac d{ds}$ based on convergence in operator norm).
Thus, we have
\[R_{H_0,V}(f)=\int_0^1\left(\frac{d}{ds}f(H_0+sV)-\frac{d}{dt}\bigg|_{t=0} f(H_0+tV)\right)\, ds.\]
Using that Bochner integrability with respect to $\|\cdot\|_\Ic$, we have
\[\tau_\Ic\big(R_{H_0,V}(f)\big)=\int_0^1\tau_\Ic\left(\frac{d}{ds}f(H_0+sV)-\frac{d}{dt}\bigg|_{t=0} f(H_0+tV)\right)\,ds.\]
Integrating by parts in the latter integral, we arrive at
\[\tau_\Ic\big(R_{H_0,V}(f)\big)=\int_0^1 (1-s)\bigg(\frac{d}{ds}\tau_\Ic\left(\frac{d}{ds}f(H_0+sV)-\frac{d}{dt}\bigg|_{t=0} f(H_0+tV)\right)\bigg)\,ds.\]
With use of Lemmas \ref{l1} and \ref{2l1}, one can verify that
\[\frac{d}{ds}\tau_\Ic\left(\frac{d}{ds}f(H_0+sV)-\frac{d}{dt}\bigg|_{t=0} f(H_0+tV)\right)
=\tau_\Ic\left(\frac{d^2}{ds^2} f(H_0+sV)\right),\]
which completes the proof of the lemma.
\end{proof}

Similarly to the case of the first order trace formula, in order to establish the second order trace formula, we need to prove the following bound for the second order G\^{a}teaux derivative.

\begin{thm}
\label{2l4} Assume Hypotheses \ref{ideal} and \ref{root}.
Let $\Omega$, $H_0$, $V$, and $\Fc$ satisfy Hypotheses \ref{2cases}. Then for all $f\in\Fc$,
\begin{align}
\label{2de}
\left|\tau_\Ic\left(\frac{d^2}{dt^2}f(H_0+tV)\right)\right|\leq \|f''\|_{L^\infty(\Omega)}\cdot \tau_\Ic(|V|^2).
\end{align}
\end{thm}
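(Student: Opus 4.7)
The plan is to reduce each case of Hypotheses~\ref{2cases} to a ``dilated'' setting where the underlying operator is self-adjoint (possibly unbounded) or unitary, and then establish the bound via a Hilbert--Schmidt--type Cauchy--Schwarz estimate built on the spectral resolution. For the dissipative case, Proposition~\ref{prop:Adil} provides a self-adjoint dilation $L_{t_0}$ of $H_{t_0}:=H_0+t_0V$, affiliated to a larger factor $\Nc\supseteq\Bc=p\Nc p$, with $(z-H_{t_0})^{-k}=p(z-L_{t_0})^{-k}p$; for the contraction case, Proposition~\ref{prop:Udil} gives a unitary $U_{t_0}\in\Nc$ with $H_{t_0}^k=pU_{t_0}^kp$. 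Extend $\tau_\Ic$ to $\tau_\Ict$ via Proposition~\ref{prop:extend}, and note that $V\in\Ic^{1/2}\subseteq\Bc=p\Nc p$ forces $pV=Vp=V$ in $\Nc$. Using Lemma~\ref{2l1} to expand $\tfrac{d^2}{dt^2}\big|_{t=t_0}f(H_0+tV)$ as a sum of triple products $H_{t_0}^{k_0}VH_{t_0}^{k_1}VH_{t_0}^{k_2}$ (or the resolvent analogue), I apply $\tau_\Ic$, use cyclicity to combine outer factors into $H_{t_0}^{k_0+k_2}VH_{t_0}^{k_1}V$, then substitute the dilation identity and absorb the remaining projection using $pV=Vp=V$ together with cyclicity of $\tau_\Ict$. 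The outcome is
\[
\tau_\Ic\Bigl(\tfrac{d^2}{dt^2}\big|_{t=t_0}f(H_0+tV)\Bigr)=\tau_\Ict\Bigl(\tfrac{d^2}{dt^2}\big|_{t=0}f(Y_{t_0}+tV)\Bigr),
\]
with $Y_{t_0}\in\{L_{t_0},U_{t_0}\}$, and $\tau_\Ict(|V|^2)=\tau_\Ic(|V|^2)$; so the problem reduces to the dilated setting.

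In the dilated setting I approximate the spectral resolution of $Y_{t_0}$ by a finite Borel partition of its spectrum, with representatives $\lambda_i\in\Omega$ and spectral projections $P_i$. Applying Lemma~\ref{2l1}, cyclicity, and $P_iP_j=\delta_{ij}P_i$ yields
\[
\tau_\Ict\Bigl(\tfrac{d^2}{dt^2}\big|_{t=0}f(Y+tV)\Bigr)=2\sum_{i,j}\phi_{ij}\,\tau_\Ict(P_iVP_jV),\qquad\phi_{ij}:=f^{[2]}(\lambda_i,\lambda_j,\lambda_i),
\]
where $f^{[2]}$ denotes the second divided difference. The integral representation $f^{[2]}(a,b,a)=\int_0^1(1-s)f''\bigl(a+s(b-a)\bigr)\,ds$ implies $\|\phi\|_\infty\le\tfrac12\|f''\|_{L^\infty(\Omega)}$.

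Setting $T_\phi(V):=\sum_{i,j}\phi_{ij}P_iVP_j\in\Ict^{1/2}$, the trace above equals $2\tau_\Ict(T_\phi(V)V)$. Orthogonality of the family $\{P_iVP_j\}_{i,j}$ in the inner product $(A,B)\mapsto\tau_\Ict(A^*B)$ on $\Ict^{1/2}$ yields the Hilbert--Schmidt--type bound
\[
\tau_\Ict(|T_\phi(V)|^2)=\sum_{i,j}|\phi_{ij}|^2\tau_\Ict(P_jV^*P_iV)\le\|\phi\|_\infty^2\tau_\Ict(|V|^2),
\]
and then Cauchy--Schwarz~\eqref{eq:CS} gives
\[
|\tau_\Ict(T_\phi(V)V)|\le\tau_\Ict(|T_\phi(V)|^2)^{1/2}\tau_\Ict(|V|^2)^{1/2}\le\tfrac12\|f''\|_{L^\infty(\Omega)}\tau_\Ict(|V|^2),
\]
from which \eqref{2de} follows upon multiplication by~$2$. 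The main technical obstacle is the passage from the discrete spectral approximation to $Y_{t_0}$ itself, especially when $Y_{t_0}=L_{t_0}$ is unbounded self-adjoint; this is handled by refining partitions and invoking $\|\cdot\|_\Ic$-continuity of the derivative expressions provided by Lemmas~\ref{l1}, \ref{l1a}, and~\ref{2l1}, noting that the Hilbert--Schmidt estimate above is uniform in the partition and so survives the limit.
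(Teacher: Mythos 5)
Your proposal is correct and takes essentially the same route as the paper: reduce via the unitary/self-adjoint dilations of Propositions~\ref{prop:Udil}, \ref{prop:Adil} and \ref{prop:extend} to an estimate at a unitary or self-adjoint operator, discretize its spectral measure so that the coefficients become the second divided differences $f^{[2]}(\la_{i_0},\la_{i_1},\la_{i_0})$ bounded by $\tfrac12\|f''\|_{L^\infty(\Omega)}$, and control the resulting bilinear expression in $V$ by Cauchy--Schwarz, exactly as in \eqref{dviadd}, \eqref{ddd} and Lemma~\ref{2ld}. The only organizational difference is that you bound the discretized sum through the Hilbert--Schmidt-type norm of $\sum_{i,j}\phi_{ij}P_iVP_j$ followed by one application of \eqref{eq:CS}, whereas the paper's Lemma~\ref{2l3} bounds $\sum_{i_0,i_1}\big|\tau_\Ic\big(E(\delta_{i_0})VE(\delta_{i_1})VE(\delta_{i_0})\big)\big|$ termwise; the two are the same Cauchy--Schwarz content, and your dilation step (an exact trace identity, since each factor $V=pVp$ absorbs the projections) is if anything slightly cleaner than the paper's inequality.
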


The proof is based on the lemma below.

\begin{lemma}
\label{2l3} Assume Hypotheses \ref{ideal}.
Let $H$ be a
normal operator affiliated to $\Bc$ and $V\in\Ic^{1/2}$.
Then, for arbitrary Borel partitions $\{\delta_{i}\}_{i=1}^m$ and $\{\delta'_{i}\}_{i=1}^{m'}$ of $\Complex$,
\begin{align*}
\sum_{i_0,i_1}\big|\tau_\Ic\big(E_H(\delta_{i_0})VE_H(\delta'_{i_1})VE_H(\delta_{i_0})\big)\big|\leq \tau_\Ic(|V|^2).
\end{align*}
\end{lemma}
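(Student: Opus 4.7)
Write $p_i:=E_H(\delta_i)$ and $q_j:=E_H(\delta'_j)$, so that $\{p_i\}_{i=1}^m$ and $\{q_j\}_{j=1}^{m'}$ are finite families of mutually orthogonal projections in $\Bc$ with $\sum_i p_i=\sum_j q_j=I$ (no convergence issues, since the partitions are finite). The plan is to apply the Cauchy--Schwarz inequality~\eqref{eq:CS} for $\tau_\Ic$ to a clever splitting of the operator $p_{i_0}Vq_{i_1}Vp_{i_0}$ into a product of two factors in $\Ic^{1/2}$, and then sum.

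Specifically, set $A_{i_0,i_1}:=p_{i_0}Vq_{i_1}$ and $B_{i_0,i_1}:=q_{i_1}Vp_{i_0}$. Both lie in $\Ic^{1/2}$ because $V\in\Ic^{1/2}$ and $\Ic^{1/2}$ is an ideal, and their product is exactly $p_{i_0}Vq_{i_1}Vp_{i_0}\in\Ic^{1/2}\cdot\Ic^{1/2}\subseteq\Ic$. Applying~\eqref{eq:CS} and then the AM--GM inequality yields
\[
\bigl|\tau_\Ic(A_{i_0,i_1}B_{i_0,i_1})\bigr|
\le\tau_\Ic(|A_{i_0,i_1}|^2)^{1/2}\,\tau_\Ic(|B_{i_0,i_1}|^2)^{1/2}
\le\tfrac12\bigl(\tau_\Ic(|A_{i_0,i_1}|^2)+\tau_\Ic(|B_{i_0,i_1}|^2)\bigr).
\]
Direct computation gives $|A_{i_0,i_1}|^2=A_{i_0,i_1}^*A_{i_0,i_1}=q_{i_1}V^*p_{i_0}Vq_{i_1}$ and $|B_{i_0,i_1}|^2=p_{i_0}V^*q_{i_1}Vp_{i_0}$, using that $p_{i_0}$ and $q_{i_1}$ are self--adjoint projections.

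Now summing over both indices and using cyclicity of $\tau_\Ic$ together with $\sum_{i_0}p_{i_0}=\sum_{i_1}q_{i_1}=I$ gives
\[
\sum_{i_0,i_1}\tau_\Ic(q_{i_1}V^*p_{i_0}Vq_{i_1})
=\sum_{i_1}\tau_\Ic\!\left(q_{i_1}V^*\Bigl(\sum_{i_0}p_{i_0}\Bigr)V\right)
=\tau_\Ic(V^*V)=\tau_\Ic(|V|^2),
\]
and an entirely analogous calculation yields $\sum_{i_0,i_1}\tau_\Ic(|B_{i_0,i_1}|^2)=\tau_\Ic(|V|^2)$. Combining these two identities with the pointwise bound above establishes $\sum_{i_0,i_1}\big|\tau_\Ic(p_{i_0}Vq_{i_1}Vp_{i_0})\big|\le\tfrac12\bigl(\tau_\Ic(|V|^2)+\tau_\Ic(|V|^2)\bigr)=\tau_\Ic(|V|^2)$. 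There is no serious obstacle here; the one point worth checking is that each elementary operator one applies $\tau_\Ic$ to actually sits inside $\Ic$, which is guaranteed by Hypotheses~\ref{root} and the ideal property of $\Ic^{1/2}$.
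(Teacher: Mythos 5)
Your proof is correct and follows essentially the same route as the paper: factor $E_H(\delta_{i_0})VE_H(\delta'_{i_1})\cdot E_H(\delta'_{i_1})VE_H(\delta_{i_0})$, apply the trace Cauchy--Schwarz inequality~\eqref{eq:CS}, and use $\sum_{i_0}E_H(\delta_{i_0})=\sum_{i_1}E_H(\delta'_{i_1})=I$ together with cyclicity to evaluate the resulting sums as $\tau_\Ic(|V|^2)$; the only (immaterial) difference is that you aggregate via AM--GM where the paper uses the discrete Cauchy--Schwarz inequality on the double sum. (Note also that Hypotheses~\ref{root} are not needed: $\Ic^{1/2}\cdot\Ic^{1/2}\subseteq\Ic$ and~\eqref{eq:CS} already follow from the ideal structure and positivity of $\tau_\Ic$ as in Hypotheses~\ref{ideal}.)
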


\begin{proof}
By the Cauchy-Schwarz inequality,
\begin{align*}
&\sum_{i_0,i_1}\big|\tau_\Ic\big(E_H(\delta_{i_0})VE_H(\delta'_{i_1})VE_H(\delta_{i_0})\big)\big|\\
&\quad\leq \sum_{i_0,i_1}\big(\tau_\Ic\big(|E_H(\delta_{i_0})VE_H(\delta'_{i_1})|^2\big)\big)^{1/2}
\big(\tau_\Ic\big(|E_H(\delta'_{i_1})VE_H(\delta_{i_0})|^2\big)\big)^{1/2}\\
&\quad\leq \left(\sum_{i_0,i_1}\tau_\Ic\big(|E_H(\delta_{i_0})VE_H(\delta'_{i_1})|^2\big)\right)^{1/2}
\left(\sum_{i_0,i_1}\tau_\Ic\big(|E_H(\delta'_{i_1})VE_H(\delta_{i_0})|^2\big)\right)^{1/2}\\
&\quad =\tau_\Ic(|V|^2).
\end{align*}
\end{proof}

The proof of Theorem \ref{2l4} will involve the divided difference of a function.
Recall that the divided difference of order $n$ is an operation on functions $f$ defined recursively as follows:
\begin{align*}
f^{[0]}(\la)&:=f(\la),\\
f^{[n]}(\la_0,\ldots,\la_n)&:=
\begin{cases}
\frac{f^{[n-1]}(\la_0,\ldots,\la_{n-2},\la_{n-1})-f^{[n-1]}(\la_0,\ldots,\la_{n-2},\la_n)}{\la_{n-1}-\la_n} &\text{ if } \la_{n-1}\neq\la_n\\
\frac{\partial}{\partial t}\big|_{t=\la_n}f^{[n-1]}(\la_0,\ldots,\la_{n-2},t) &\text{ if } \la_{n-1}=\la_n.
\end{cases}
\end{align*}
We 
have the following bound for the divided difference of $f\in C_b^2$: 
\begin{align}
\label{ddd}
\big\|f^{[2]}\big\|_\infty\leq\frac12\, \|f''\|_\infty.
\end{align}

Below we provide formulas for the second order divided differences of the functions involved in the proof of Theorem \ref{2l4}.

\begin{lemma}
\label{2ld}
The following assertions hold.
\begin{enumerate}[(i)]
\item \label{2ldiii}
For $f(\la)=(z-\la)^{-k}$, $k\in\Nats$, and $z,\la_0,\la_1,\la_2$ such that $f^{[2]}(\la_0,\la_1,\la_2)$ is well defined,
\[f^{[2]}(\la_0,\la_1,\la_2)=\sum_{\substack{1\leq k_0,k_1,k_2\leq k\\k_0+k_1+k_2=k+2}}(z-\la_0)^{-k_0}(z-\la_1)^{-k_1}(z-\la_2)^{-k_2}.\]

\item \label{2ldi}
For $f(\la)=\la^k$, $k\in\Nats$, and $\la_0,\la_1,\la_2\in \Complex$,
\[f^{[2]}(\la_0,\la_1,\la_2)=\sum_{\substack{0\leq k_0,k_1,k_2\\k_0+k_1+k_2=k-2}}\la_0^{k_0}\la_1^{k_1}\la_2^{k_2}.\]

\item \label{2ldii}
For $f$ analytic on a disc of radius $r>1$ centered at $0$ and $\la_0,\la_1,\la_2\in \overline{\mathbb{D}}$,
\[f^{[2]}(\la_0,\la_1,\la_2)=\sum_{k=1}^\infty \hat f(k)\sum_{\substack{0\leq k_0,k_1,k_2\\k_0+k_1+k_2=k-2}}\la_0^{k_0}\la_1^{k_1}\la_2^{k_2}.\]
\end{enumerate}
\end{lemma}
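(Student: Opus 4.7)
The plan is to prove all three formulas by two applications of the recursive definition of the divided difference. In each case the right-hand side is a polynomial in the arguments (part (ii)) or in the resolvents $(z-\la_i)^{-1}$ (parts (i) and (iii)), hence a continuous function of $(\la_0, \la_1, \la_2)$. It therefore suffices to verify the identities on the open set where $\la_1 \ne \la_2$, since the coincidence case then follows by the partial-derivative clause of the definition together with continuity. I would prove (ii) first, then (i) by an analogous telescoping, and finally deduce (iii) from (ii) by linearity and uniform convergence.

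For part (ii), the classical geometric factorization $\la_0^k - \la_1^k = (\la_0 - \la_1)\sum_{j_0 + j_1 = k - 1}\la_0^{j_0}\la_1^{j_1}$ (sum over $j_0, j_1 \ge 0$) immediately gives $f^{[1]}(\la_0, \la_1) = \sum_{j_0 + j_1 = k - 1}\la_0^{j_0}\la_1^{j_1}$. Applying the same factorization to each difference $\la_1^{j_1} - \la_2^{j_1}$ inside the divided-difference recursion and relabeling indices produces the triple sum in the statement, with the new constraint $k_0 + k_1 + k_2 = k - 2$.

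For part (i), the analogue of the geometric factorization is the telescoping identity
\[
(z - \la_0)^{-k} - (z - \la_1)^{-k} = (\la_0 - \la_1)\sum_{\substack{1 \le k_0, k_1 \\ k_0 + k_1 = k + 1}} (z - \la_0)^{-k_0}(z - \la_1)^{-k_1},
\]
which I would establish by induction on $k$, starting from the base case $k = 1$ (a direct rewriting of the standard resolvent identity) and using $(z-\la_0)^{-k} = (z-\la_0)^{-1}(z-\la_0)^{-(k-1)}$. Dividing by $\la_0 - \la_1$ gives $f^{[1]}(\la_0, \la_1)$, and a second application of the same telescoping to each summand $(z-\la_1)^{-k_1} - (z-\la_2)^{-k_1}$, followed by relabeling, produces the claimed sum over triples $(k_0, k_1, k_2)$ with $k_i \ge 1$ and $k_0 + k_1 + k_2 = k + 2$. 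The only real obstacle is the bookkeeping of the index ranges: after two telescopings one must check that the new indices arise as $(k_0, j, k_1 + 1 - j)$ with $k_0 \ge 1$, $1 \le j \le k_1$, and $k_0 + k_1 = k+1$, which indeed gives all triples of positive integers summing to $k+2$.

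For part (iii), I would substitute the power series $f(\la) = \sum_{k=0}^\infty \hat f(k)\la^k$ into $f^{[2]}$, use linearity of the divided difference, and apply part (ii) termwise. For $k \in \{0, 1\}$ the inner sum is empty and so those terms vanish, which is why the outer sum may be started at $k = 1$. The termwise interchange is justified by absolute convergence on $\overline{\mathbb{D}}^3$: since $f$ is analytic on a disc of radius $r > 1$ centered at $0$, the Taylor coefficients $\hat f(k)$ decay geometrically, while each inner sum has modulus at most $\binom{k}{2}$ when $|\la_i| \le 1$, so the double series converges absolutely and uniformly on $\overline{\mathbb{D}}^3$.
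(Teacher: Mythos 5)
Your proof is correct: the double application of the divided-difference recursion via the geometric/telescoping factorizations, with the coincidence cases handled by the derivative clause and continuity, and the termwise summation in (iii) justified by the geometric decay of $\hat f(k)$ against the $\binom{k}{2}$-term inner sums, all checks out, including the index bookkeeping giving triples of positive integers summing to $k+2$ in (i) and nonnegative integers summing to $k-2$ in (ii). The paper states Lemma \ref{2ld} without proof, treating it as a routine computation, and your argument is exactly the standard verification it leaves implicit.
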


\begin{proof}[Proof of Theorem \ref{2l4}]
Note that, in case both $H_0$ and $H_0+V$ are self-adjoint or both are unitary,
the estimate \eqref{2de} would follow from the representation
\begin{align}
\label{dviadd}
\nonumber
&\tau_\Ic\left(\frac{d^2}{dt^2}f(H_0+tV)\right)\\
&\quad=2\lim_{n\rightarrow\infty}\sum_{i_0=1}^n\sum_{i_1=1}^n f^{[2]}\big(\la_{i_0}^{(n)},\la_{i_1}^{(n)},\la_{i_0}^{(n)}\big)
\,\tau_\Ic\left(E_{H_t}\big(\delta_{i_0}^{(n)}\big)V E_{H_t}\big(\delta_{i_1}^{(n)}\big)V\right),
\end{align}
for certain $\la_j^{(n)}\in\Omega$, $j=1,\ldots,n$,  and
partitions $\big(\delta_{i}^{(n)}\big)_{i=1}^n$ of $\Complex$,
in combination with
Lemma \ref{2l3}, and the bound \eqref{ddd}.

\medskip\noindent
{\em Case 1:} $H_0=H_0^*$ (possibly unbounded, affiliated to $\Bc$), $V=V^*\in\Ic^{1/2}$, and
\[f\in\text{\rm span}\left\{\la\mapsto (z-\la)^{-k}:\, k\in\Nats,\, \im(z)\neq 0\right\}.\]
In order to prove~\eqref{dviadd} for all such $f$,
it will enough to consider $f(\la)=(z-\la)^{-k}$.
By Lemma \ref{2l1} \eqref{2l1i},
\begin{align*}
&\tau_\Ic\left(\frac{d^2}{dt^2}f(H_0+tV)\right)=
\\&\quad=2\sum_{\substack{1\leq k_0,k_1,k_2\leq k\\k_0+k_1+k_2=k+2}}
\tau_\Ic\big((zI-H_t)^{-k_0}V(zI-H_t)^{-k_1}V(zI-H_t)^{-k_2}\big)\\
&\quad=2\sum_{\substack{1\leq k_0,k_1,k_2\leq k\\k_0+k_1+k_2=k+2}}
\tau_\Ic\big((zI-H_t)^{-k_0-k_2}V(zI-H_t)^{-k_1}V\big).
\end{align*}
By the spectral theorem,
there are Borel partitions $(\delta_i^{(n)})_{1\le i\le n}$ of $\Complex$ and complex numbers $\lambda_i^{(n)}$ such that
for any $m\in\{1,\ldots,k+2\}$ and any $t\in[0,1]$, the quantity
\[
\sum_{j=1}^n \big(z-\la_j^{(n)}\big)^{-m}E_{H_t}\big(\delta_j^{(n)}\big)
\]
converges in operator norm to $(zI-H_t)^{-m}$ as $n\to\infty$.
It follows, using the norm estimate~\eqref{eq:I12}, that for any $m_0,m_1\in\{1,\ldots,k+2\}$,
the quantity
\[
\sum_{i_0=1}^n\sum_{i_1=1}^n\big(z-\la_{i_0}^{(n)}\big)^{-m_0}\big(z-\la_{i_1}^{(n)}\big)^{-m_1}
 E_{H_t}\big(\delta_{i_0}^{(n)}\big)VE_{H_t}\big(\delta_{i_1}^{(n)}\big)V
\]
converges in $\|\cdot\|_\Ic$ to $(zI-H_t)^{-m_0}V(zI-H_t)^{-m_1}V$ as $n\to\infty$.
Let us write $\rho_t(A,B)=\tau_\Ic\big(E_{H_t}(A)V E_{H_t}(B)V\big)$.
Then,
\begin{align*}
&\tau_\Ic\left(\frac{d^2}{dt^2}f(H_0+tV)\right)=
\\&\quad=2\sum_{\substack{1\leq k_0,k_1,k_2\leq k\\k_0+k_1+k_2=k+2}}\lim_{n\rightarrow\infty}\,
\sum_{i_0=1}^n\sum_{i_1=1}^n\big(z-\la_{i_0}^{(n)}\big)^{-k_0-k_2}\big(z-\la_{i_1}^{(n)}\big)^{-k_1}
\,\rho_t\big(\delta_{i_0}^{(n)},\delta_{i_1}^{(n)}\big)
\\&\quad=2\lim_{n\rightarrow\infty}\sum_{i_0=1}^n\sum_{i_1=1}^n\sum_{\substack{1\leq k_0,k_1,k_2\leq k\\k_0+k_1+k_2=k+2}}
\big(z-\la_{i_0}^{(n)}\big)^{-k_0-k_2}\big(z-\la_{i_1}^{(n)}\big)^{-k_1}
\,\rho_t\big(\delta_{i_0}^{(n)},\delta_{i_1}^{(n)}\big).
\end{align*}
By Lemma \ref{2ld} \eqref{2ldiii}, the latter equals the limit in \eqref{dviadd}.

\medskip\noindent
{\em Case 2:} $V\in\Ic^{1/2}$, $H_0$ is unitary, $H_0+V$ is a contraction, and $f$ is a function analytic on a disc of radius $r>1$ centered at $0$.

Firstly, we make an additional assumption that $f$ is a polynomial.
The formula~\eqref{dviadd} and the estimate for
$\left|\tau_\Ic\left(\frac{d^2}{dt^2}\big|_{t=0} f(H_0+tV)\right)\right|$
can be derived completely analogously to the estimate in Case 1.
Now for $f$ analytic as above,
we approximate $f$ by a sequence of polynomials $f_n(z)=\sum_{k=0}^n \hat f(k)z^k$,
so that $\{f_n''\}_{n=1}^\infty$ converges to $f''$ in $L^\infty(\mathbb{T})$.
By Lemma \ref{2l1} \eqref{2l1iii}, we have
\begin{align*}
&\tau_\Ic\left(\frac{d^2}{dt^2}f(H_0+tV)\right)-\tau_\Ic\left(\frac{d^2}{dt^2}f_n(H_0+tV)\right)\\
&\quad=2\sum_{k=n+1}^\infty \hat f(k) \sum_{\substack{0\leq k_0,k_1,k_2\\k_0+k_1+k_2=k-2}}\tau_\Ic\big(H_t^{k_0}VH_t^{k_1}VH_t^{k_2}\big).
\end{align*}
Thus, by $\|\cdot\|_\Ic$--boundedness of $\tau_\Ic$ and the inequality~\eqref{eq:I12},
\begin{align*}
&\sup_{t\in[0,1]}\left|\tau_\Ic\left(\frac{d^2}{dt^2}f(H_0+tV)\right)
-\tau_\Ic\left(\frac{d^2}{dt^2}f_n(H_0+tV)\right)\right|\\
&\quad\leq 2\, \cdot\|\tau_\Ic\|_{\Ic^*}\cdot\big\||V|^2\big\|_\Ic\sum_{k=n+1}^\infty k(k-1)\,|\hat f(k)|,
\end{align*}
which converges to zero as $n\rightarrow\infty$.
Therefore,
\begin{align*}
\left|\tau_\Ic\left(\frac{d^2}{dt^2}\bigg|_{t=0}f(H_0+tV)\right)\right|
&=\lim_{n\rightarrow\infty}\left|\tau_\Ic\left(\frac{d^2}{dt^2}\bigg|_{t=0}f_n(H_0+tV)\right)\right|\\
&\leq\tau_\Ic\big(|V|^2\big)\lim_{n\rightarrow\infty}\|f_n\|_{L^\infty(\mathbb{T})}
=\tau_\Ic\big(|V|^2\big)\|f\|_{L^\infty(\mathbb{T})}.
\end{align*}

The estimate for the derivative at $t=t_0\neq 0$ can be derived by dilating the contraction $H_0+t_0V$ to a unitary operator, as it was done in the proof of Theorem \ref{l4}.

Finally, in case of dissipative (respectively, contractive) operators $H_0$ and $H_0+V$,
the estimate \eqref{2de} follows from the self-adjoint (respectively, unitary) case
and use of the self-adjoint (unitary) dilations results from Subsection~\ref{subsec:dilation}
and Proposition~\ref{prop:extend},
similarly to how it was done in the proof of Theorem \ref{l4}.

\end{proof}

\begin{proof}[Proof of Theorem \ref{2mt}]
The result follows upon applying Lemma \ref{ir}, Theorem \ref{2l4}, the Riesz representation theorem, and, in case of non-self-adjoint and non-unitary operators, the Hahn-Banach theorem.
\end{proof}

By adjusting the proof of Theorem \ref{imt}, we obtain the following generalization
of the formula~\eqref{2lin}
for the second order remainder of the Taylor approximation (again, with non--optimal set of functions $f$).

\begin{thm}
\label{2imt} Assume Hypotheses \ref{ideal} and \ref{root}.
Suppose $\tau_\Ic(\Ic^{3/2})=\{0\}$.
Assume either Hypotheses \ref{2cases} or take $H_0=H_0^*\in\mathcal{B}$, $V=V^*\in\Ic^{1/2}$ and
$\mathcal{F}=C^4(\Reals)$.
Then, for every $f\in\mathcal{F}$
\begin{equation*}
\tau_\Ic\left(f(H_0+V)-f(H_0)-\frac{d}{dt}\bigg|_{t=0}f(H_0+tV)\right)=
\frac12\,\tau_\Ic\left(\frac{d^2}{dt^2}\bigg|_{t=0}f(H_0+tV)\right).
\end{equation*}

\end{thm}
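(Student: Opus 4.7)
The plan is to imitate the strategy of the proof of Theorem~\ref{imt}, now exploiting vanishing of $\tau_\Ic$ on $\Ic^{3/2}$ rather than on $\Ic^2$. Lemma~\ref{ir} immediately yields
\[
\tau_\Ic\big(R_{H_0,V}(f)\big)=\int_0^1(1-t)\,\tau_\Ic\left(\frac{d^2}{ds^2}\bigg|_{s=t}f(H_0+sV)\right)dt,
\]
and since $\int_0^1(1-t)\,dt=\tfrac12$, the conclusion will follow once I show that the map $t\mapsto\tau_\Ic\big(\frac{d^2}{ds^2}\big|_{s=t}f(H_0+sV)\big)$ is constant on $[0,1]$. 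To achieve this, my plan is to establish the stronger operator--level claim
\[
\frac{d^2}{ds^2}\bigg|_{s=t}f(H_0+sV)-\frac{d^2}{ds^2}\bigg|_{s=0}f(H_0+sV)\in\Ic^{3/2}\qquad(t\in[0,1]),
\]
for then the hypothesis $\tau_\Ic(\Ic^{3/2})=\{0\}$ finishes the argument.

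Under Hypotheses~\ref{2cases}, linearity together with (in the contractive case) uniform convergence in $\|\cdot\|_\Ic$ of the series from Lemma~\ref{2l1}\eqref{2l1iii} reduce matters to $f(\lambda)=(z-\lambda)^{-k}$ in the dissipative case and $f(\lambda)=\lambda^k$ in the contractive case. For such $f$, Lemma~\ref{2l1} writes $\frac{d^2}{ds^2}\big|_{s=t}f(H_0+sV)$ as a finite sum of products $A_0(t)\,V\,A_1(t)\,V\,A_2(t)$ with each $A_j(t)$ a resolvent $(zI-H_0-tV)^{-k_j}$ or a power $(H_0+tV)^{k_j}$, bounded in norm uniformly in $t$. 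I will telescope in the three $A_j(t)$ factors and expand each difference $A_j(t)-A_j(0)$ via Lemma~\ref{l1}, which introduces one further factor of $V$. The net effect expresses the operator difference as a finite sum of products carrying three factors of $V\in\Ic^{1/2}$ interleaved with uniformly bounded operators; these lie in $\Ic^{3/2}$ by the usual generalized singular--number estimates.

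For the remaining case $H_0=H_0^*\in\Bc$, $V=V^*\in\Ic^{1/2}$, $f\in C^4(\Reals)$, the spectra of $H_0+tV$ for $t\in[0,1]$ lie in a common compact interval, so I may replace $f$ by a function in $C_c^4(\Reals)$. Iterating the Fourier--Duhamel representation of Lemma~\ref{l1a}\eqref{l1ai}, I will express $\frac{d^2}{ds^2}\big|_{s=t}f(H_0+sV)$ as a triple Bochner integral, convergent in $\|\cdot\|_\Ic$, of $\widehat{f}(\lambda)\,e^{\ii a(H_0+tV)}Ve^{\ii b(H_0+tV)}Ve^{\ii c(H_0+tV)}$, using~\eqref{eq:I12} to justify the interchange of $\frac{d}{dt}$ with Bochner integration. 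Subtracting the same formula at $t=0$ and expanding each difference of exponentials $e^{\ii y(H_0+tV)}-e^{\ii y H_0}$ by one further Duhamel step realizes the operator difference as a Bochner integral of terms carrying three factors of $V$, hence in $\Ic^{3/2}$.

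The main obstacle will be the bookkeeping needed for the Bochner integrals in the $f\in C^4$ case: one must verify convergence in an appropriate quasi--norm on $\Ic^{3/2}$ and justify that $\tau_\Ic$ passes through the multiple integral so that vanishing of $\tau_\Ic$ on $\Ic^{3/2}$ applies termwise. Once this is in place, the remainder of the argument is a direct transcription of the proof of Theorem~\ref{imt} with $\Ic^{3/2}$ playing the role of $\Ic^2$.
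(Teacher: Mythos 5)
Your argument is correct in substance, but it routes the proof somewhat differently from the paper. The paper gives no detailed argument: it says to adjust the proof of Theorem~\ref{imt}, i.e.\ to show directly that the full second--order Taylor remainder $f(H_0+V)-f(H_0)-\frac{d}{dt}\big|_{t=0}f(H_0+tV)-\frac12\frac{d^2}{dt^2}\big|_{t=0}f(H_0+tV)$ has vanishing trace: under Hypotheses~\ref{2cases} the identities of Lemmas~\ref{l1} and~\ref{2l1} exhibit it as a finite sum (or $\|\cdot\|_\Ic$--convergent series) of words containing three factors of $V\in\Ic^{1/2}$, hence an element of $\Ic^{3/2}$, while in the self--adjoint $C^4$ case one iterates Duhamel's formula once more and kills each traced integrand, which lies in $\Ic^{3/2}$. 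You instead invoke Lemma~\ref{ir} and prove that $t\mapsto\tau_\Ic\big(\frac{d^2}{ds^2}\big|_{s=t}f(H_0+sV)\big)$ is constant, obtaining the factor $\frac12$ from $\int_0^1(1-t)\,dt$; the key mechanism (one extra factor of $V\in\Ic^{1/2}$ lands you in $\Ic^{3/2}$, where $\tau_\Ic$ vanishes) is the same, and your telescoping of the three slots in Lemma~\ref{2l1}, expanding each difference by Lemma~\ref{l1}, does give membership in $\Ic^{3/2}$ for resolvents and monomials, with the analytic case handled at the trace level via $\|\cdot\|_\Ic$--convergence and $\|\cdot\|_\Ic$--boundedness of $\tau_\Ic$. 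Two points you should make explicit: first, Lemma~\ref{ir} is stated only under Hypotheses~\ref{2cases}, so in the case $H_0=H_0^*\in\Bc$, $V=V^*\in\Ic^{1/2}$, $f\in C^4(\Reals)$ you must also establish the integral--remainder identity $\tau_\Ic\big(R_{H_0,V}(f)\big)=\int_0^1(1-s)\,\tau_\Ic\big(\frac{d^2}{du^2}\big|_{u=s}f(H_0+uV)\big)\,ds$, not only the constancy; this follows by the same scheme as Lemmas~\ref{lem:unifconts} and~\ref{ir} using your iterated Fourier--Duhamel representation, but it is an additional step. Second, no quasi--norm convergence on $\Ic^{3/2}$ is actually needed: convergence of the multiple Bochner integrals in $\|\cdot\|_\Ic$ (via~\eqref{eq:I12}) lets you pass $\tau_\Ic$ inside, and then each individual integrand lies in $\Ic^{3/2}$ and so has zero trace.
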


\begin{remark}
\label{2comparison}
It was proved in \cite{Koplienko} that in case of self-adjoint $H_0$, $V$, with $V$ in the Hilbert-Schmidt class and $\tau_\Ic$ replaced with the standard trace $\Tr$ in \eqref{2tf}, the second order spectral shift measure
can be expressed via the first order spectral shift measure, and is absolutely continuous. The latter proof crucially relied on the fact that a Hilbert-Schmidt operator can be approximated by a sequence of trace class operators in the Hilbert-Schmidt norm. In the case of $\Ic=\mathcal{L}^{(1,\infty)}$ and a Dixmier trace $\tau_\Ic=\Tr_\omega$, we do not have a similar approximation property for the elements of $\Ic^{1/2}$ by the elements of $\Ic$. Moreover, as a consequence of the singularity of the Dixmier trace, $\nu_{H_0,V}$ can be a singular measure (see Proposition \ref{2dm}) and if $V\in \Ic$, 
then $\nu_{H_0,V}$ degenerates to zero (see Proposition \ref{2triv}).
\end{remark}

\begin{prop}
\label{2dm} Assume Hypotheses \ref{ideal} and \ref{root}.
Suppose $\tau_\Ic(\Ic^{3/2})=\{0\}$.
Let $H_0=aI$, for $a\in\Reals$, and let $V=V^*\in\Ic^{1/2}$.
Then, under Hypotheses \ref{2cases}(i), the measure $\nu_{H_0,V}=\frac12\,\tau_\Ic(V^2)\,\delta_a$ on $\Reals$
satisfies Theorem~\ref{2mt}.
\end{prop}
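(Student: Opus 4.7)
The plan is to apply Theorem~\ref{2imt} to rewrite the trace formula in terms of a second G\^{a}teaux derivative, which becomes elementary here because $H_0=aI$ commutes with $V$. Note first that $V=V^*$ implies both $H_0$ and $H_0+V$ are bounded self--adjoint, hence dissipative, so Hypotheses~\ref{2cases}(i) are satisfied and Theorem~\ref{2mt} produces a measure $\nu_{H_0,V}$. Under the additional assumption $\tau_\Ic(\Ic^{3/2})=\{0\}$, Theorem~\ref{2imt} applies and gives, for every $f\in\Fc$,
\[
\tau_\Ic\big(R_{H_0,V}(f)\big) \;=\; \frac{1}{2}\,\tau_\Ic\left(\frac{d^2}{dt^2}\bigg|_{t=0} f(aI+tV)\right).
\]

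Next, I would compute the right--hand derivative explicitly on the basis $f(\la)=(z-\la)^{-k}$, $k\in\Nats$, $\im(z)<0$ that spans $\Fc$. Since $aI$ is central, applying Lemma~\ref{2l1}(i) with $aI$ in place of $H_0$ collapses every resolvent factor to the scalar $(z-a)^{-k_i}$, producing
\[
\frac{d^2}{dt^2}\bigg|_{t=0}(zI-aI-tV)^{-k} \;=\; 2\sum_{\substack{1\le k_0,k_1,k_2\le k\\k_0+k_1+k_2=k+2}} (z-a)^{-(k_0+k_1+k_2)}\,V^2.
\]
A standard stars--and--bars count (after the substitution $k_i'=k_i-1\ge 0$) shows that the index set has cardinality $\binom{k+1}{2}=k(k+1)/2$, so the sum equals $k(k+1)(z-a)^{-(k+2)}V^2=f''(a)V^2$. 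By linearity, the identity $\frac{d^2}{dt^2}|_{t=0}f(aI+tV)=f''(a)V^2$ extends to all $f\in\Fc$. This is the one place where the assumption $H_0=aI$ is doing real work, since it is precisely the commutativity of $H_0$ with $V$ that allows the resolvents to be pulled out as scalars.

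Combining the two displays yields
\[
\tau_\Ic\big(R_{H_0,V}(f)\big) \;=\; \frac{1}{2}\,f''(a)\,\tau_\Ic(V^2) \;=\; \int_\Reals f''(\la)\,d\!\left(\frac{1}{2}\tau_\Ic(V^2)\,\delta_a\right)\!(\la),
\]
which is precisely the trace formula of Theorem~\ref{2mt} with $\nu_{H_0,V}=\frac{1}{2}\tau_\Ic(V^2)\,\delta_a$. Since the proposition asserts only that this particular measure satisfies Theorem~\ref{2mt} (not that it is the unique such measure), no further uniqueness argument is required. The main technical step is therefore the explicit evaluation of the Lemma~\ref{2l1}(i) expression upon the substitution $H_{t_0}=aI$ together with the combinatorial identification of the coefficient as $f''(a)$; everything else is a direct consequence of the cited theorems.
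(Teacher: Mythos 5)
Your proof is correct, but it takes a different route from the paper's. The paper argues with monomials: it expands $(aI+V)^k$ by the binomial theorem, uses $\tau_\Ic(\Ic^{3/2})=\{0\}$ directly to annihilate the terms $a^{k-j}V^j$ with $j\ge3$ (since $V^j\in\Ic^{3/2}$ for $V\in\Ic^{1/2}$), obtains $k(k-1)\int_\Reals t^{k-2}\,\nu_{H_0,V}(dt)=k(k-1)a^{k-2}\,\tfrac12\tau_\Ic(V^2)$, and then identifies the measure through its moments by the same compact-support/density-of-polynomials argument as in Proposition \ref{dm}. You instead verify the trace formula \eqref{2tftau} directly on the resolvent functions $\la\mapsto(z-\la)^{-k}$, $\im(z)<0$, that span $\Fc$ under Hypotheses \ref{2cases}(i): you invoke Theorem \ref{2imt} (where the hypothesis $\tau_\Ic(\Ic^{3/2})=\{0\}$ enters for you), and then use Lemma \ref{2l1}(i) together with the centrality of $aI$ and the stars-and-bars count $\binom{k+1}{2}$ to get $\frac{d^2}{dt^2}\big|_{t=0}f(aI+tV)=f''(a)V^2$; the combinatorics and the identification of the coefficient with $f''(a)=k(k+1)(z-a)^{-(k+2)}$ are correct. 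What your route buys is directness: you never leave the class $\Fc$ of admissible test functions, so no moment-matching or polynomial-density step is needed, which fits the precise assertion of the proposition (that this particular measure satisfies Theorem \ref{2mt}, with no uniqueness claim). What the paper's route buys is elementarity (binomial theorem only, no appeal to Theorem \ref{2imt}) and a uniform treatment parallel to Proposition \ref{dm}. One small point you could add in a sentence: "satisfies Theorem \ref{2mt}" also includes the total variation bound \eqref{2tfe}, which here holds with equality since $V=V^*$ gives $\|\nu_{H_0,V}\|=\tfrac12\tau_\Ic(V^2)=\tfrac12\tau_\Ic(|V|^2)\ge0$ by positivity of $\tau_\Ic$.
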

\begin{proof}
By Theorem \ref{2mt} and direct calculations,
\begin{align*}
&k(k-1)\int_\Reals t^{k-2}\,\nu_{H_0,V}(dt)\\
&\quad=\tau_\Ic\left((aI+V)^k-a^kI-\frac{d}{dt}\bigg|_{t=0}(aI+tV)^k\right)
=\tau_\Ic\left(\sum_{j=1}^k\begin{pmatrix}k\\j\end{pmatrix}a^{k-j}V^j-ka^{k-1}V\right)\\
&\quad=k(k-1)\,a^{k-2}\,\frac12\,\tau_\Ic(V^2).
\end{align*}
The rest of the proof goes like the one of Proposition \ref{dm}.
\end{proof}

\begin{prop}
\label{2amc} Assume Hypotheses \ref{ideal} and \ref{root}.
Let $H_0$ and $H_0+V$ be contractions.
Assume that $\tau_\Ic(\Ic^{3/2})=\{0\}$.
If $H_0,V\in\Ic^{1/2}$, then under Hypotheses \ref{2cases}(ii),
Theorem~\ref{2mt} holds with an absolutely continuous measure $\nu_{H_0,V}$, and in fact with $\nu_{H_0,V}$ equal to
a multiple of Haar measure on the unit circle.
\end{prop}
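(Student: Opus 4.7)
The plan is to mimic the strategy of Proposition \ref{amc}: since Theorem \ref{2mt} only asserts existence of $\nu_{H_0,V}$, it suffices to exhibit an explicit multiple of Haar measure on $\mathbb{T}$ that satisfies the trace formula \eqref{2tftau} for every $f\in\Fc$. I will first compute $\tau_\Ic(R_{H_0,V}(z^k))$ for each monomial $z^k$. A double application of Lemma \ref{l1}\eqref{l1ii}---once to expand $(H_0+V)^k-H_0^k$, and once more to each factor $(H_0+V)^{k_0}-H_0^{k_0}$---followed by subtraction of the derivative expansion $\sum_{k_0+k_1=k-1}H_0^{k_0}VH_0^{k_1}$ produces
\[R_{H_0,V}(z^k)=\sum_{\substack{k_0+k_1=k-1\\k_0\geq 1}}\sum_{\substack{j_0+j_1=k_0-1\\j_0,j_1\geq 0}}(H_0+V)^{j_0}VH_0^{j_1}VH_0^{k_1}.\]
Each summand is a product of $k$ factors drawn from $\{H_0,H_0+V,V\}\subseteq\Ic^{1/2}$, so by the generalized H\"older estimate on the Marcinkiewicz scale (an iteration of \eqref{eq:I12}) it lies in $\Ic^{k/2}$. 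For $k\ge 3$ this yields $\Ic^{k/2}\subseteq\Ic^{3/2}$, and the hypothesis $\tau_\Ic(\Ic^{3/2})=\{0\}$ forces $\tau_\Ic(R_{H_0,V}(z^k))=0$; for $k=2$ the double sum collapses to the lone term $V^2$; and for $k\in\{0,1\}$ it is empty.

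Passing to general $f\in\Fc$, the series representations in Lemma \ref{l1a}\eqref{l1aii} converge in $\|\cdot\|_\Ic$, so by $\|\cdot\|_\Ic$--boundedness of $\tau_\Ic$ one may interchange trace and sum to obtain
\[\tau_\Ic(R_{H_0,V}(f))=\sum_{k=0}^\infty\hat f(k)\,\tau_\Ic(R_{H_0,V}(z^k))=\hat f(2)\,\tau_\Ic(V^2).\]
On the other hand, writing $m$ for normalized Haar measure on $\mathbb{T}$, the expansion $f''(z)=\sum_{k\ge 2}k(k-1)\hat f(k)z^{k-2}$ has constant Fourier coefficient $2\hat f(2)$, so $\int_\mathbb{T}f''\,dm=2\hat f(2)$. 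Setting $\nu_0:=\tfrac12\tau_\Ic(V^2)\,m$ therefore produces
\[\int_\mathbb{T}f''\,d\nu_0=\hat f(2)\,\tau_\Ic(V^2)=\tau_\Ic(R_{H_0,V}(f))\]
for every $f\in\Fc$, and one may take $\nu_{H_0,V}=\nu_0$.

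The main subtlety is the ideal-theoretic bookkeeping in the monomial step: one must verify that, after the double expansion, the lone $V^2$ (at $k=2$) is the only summand not carrying an additional $\{H_0,H_0+V\}$-factor, so that for $k\ge 3$ every summand contains at least three factors from $\Ic^{1/2}$ and is thereby confined to $\Ic^{3/2}$---precisely the ideal annihilated by the vanishing hypothesis. The remaining work (series convergence, interchange of sum and trace, and Fourier extraction on $\mathbb{T}$) is routine.
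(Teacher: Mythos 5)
Your proof is correct, and its essential mechanism is the same as the paper's: reduce to monomials $z^k$, observe that every contributing term is a product of at least three factors from $\Ic^{1/2}$ once $k\ge3$, so the hypothesis $\tau_\Ic(\Ic^{3/2})=\{0\}$ kills everything except the lone $V^2$ at $k=2$, which pins the measure down to $\tfrac12\tau_\Ic(V^2)$ times Haar measure. The route differs only mildly from the paper's: the paper (following the pattern of Proposition \ref{amc}) invokes the already-established trace formula \eqref{2tftau} of Theorem \ref{2mt} and computes the moments $\int_{\mathbb T}z^n\,d\nu_{H_0,V}$ using Lemma \ref{ir} together with the second-derivative expansion of Lemma \ref{2l1}(ii), whereas you expand the remainder $R_{H_0,V}(z^k)$ algebraically by applying Lemma \ref{l1}(ii) twice and then verify \eqref{2tftau} directly against the explicit measure $\tfrac12\tau_\Ic(V^2)\,m$; your version has the small advantage of not passing through the Hahn--Banach-constructed measure at all, and it even recovers the total variation bound \eqref{2tfe} for the exhibited measure. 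One citation to tighten: Lemma \ref{l1a}(ii) is stated for $V\in\Ic$, while here $V\in\Ic^{1/2}$ only; the termwise trace computation for general $f\in\Fc$ should instead be justified by the bound $\|R_{H_0,V}(z^k)\|_\Ic\le\tfrac{k(k-1)}2\,\|V\|_{\Ic^{1/2}}^2$ coming from \eqref{eq:I12} and property (iii) of the ideal norm (note the $k=1$ term, which is only in $\Ic^{1/2}$, cancels inside $R_{H_0,V}$), exactly as in the paper's remark before Lemma \ref{lem:unifconts} --- this is the routine step you allude to, so it is a presentational fix rather than a gap.
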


\begin{proof}
The proof follows from adjusting the reasoning in the proof of Proposition \ref{amc}, where we employ Lemma \ref{2l1} \eqref{2l1ii} and Lemma \ref{ir} to show that $\int_\mathbb{T}z^n\,d\nu_{H_0,V}(z)=0$, for $n\in\Nats$.
\end{proof}

\begin{prop}
\label{2triv}
Assume Hypotheses \ref{ideal}, \ref{root}, and \ref{2cases}. If $\tau_\Ic(|V|^2)=0$, then Theorem~\ref{2mt} holds with
$\nu_{H_0,V}=0$.
\end{prop}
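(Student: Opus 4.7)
The plan is to verify that the zero measure on $\Omega$ satisfies the conclusions of Theorem~\ref{2mt} under the stated hypothesis; equivalently, to show that $\tau_\Ic\big(R_{H_0,V}(f)\big) = 0$ for every $f \in \Fc$ whenever $\tau_\Ic(|V|^2) = 0$. Once this is established, taking $\nu_{H_0,V}$ to be the zero measure makes both sides of \eqref{2tftau} equal to zero and trivially satisfies the total variation bound \eqref{2tfe}.

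First, I would apply Lemma~\ref{ir} to write
\[
\tau_\Ic\big(R_{H_0,V}(f)\big) = \int_0^1 (1-t)\, \tau_\Ic\!\left(\frac{d^2}{dt^2}f(H_0+tV)\right) dt.
\]
Next, I would invoke the pointwise (in $t$) bound from Theorem~\ref{2l4},
\[
\left|\tau_\Ic\!\left(\frac{d^2}{dt^2}f(H_0+tV)\right)\right| \;\leq\; \|f''\|_{L^\infty(\Omega)}\cdot \tau_\Ic(|V|^2),
\]
which applies under either case of Hypotheses~\ref{2cases}. Since by assumption $\tau_\Ic(|V|^2) = 0$, the integrand in the Lemma~\ref{ir} representation is identically zero on $[0,1]$, so $\tau_\Ic\big(R_{H_0,V}(f)\big) = 0$ for every $f \in \Fc$.

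With the left-hand side of \eqref{2tftau} identically zero, the zero measure $\nu_{H_0,V} = 0$ satisfies the trace formula, and the estimate $\|\nu_{H_0,V}\|\le\frac12\tau_\Ic(|V|^2)=0$ is immediate. This approach exactly parallels the route used for Proposition~\ref{triv}, with Theorem~\ref{2l4} and Lemma~\ref{ir} playing the roles that the first-order bound \eqref{tfe} played there. There is no substantive obstacle: the proposition is a direct consequence of the quantitative bound of Theorem~\ref{2l4} combined with the integral identity of Lemma~\ref{ir}.
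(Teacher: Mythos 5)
Your proposal is correct and follows essentially the paper's route: the paper simply observes that the total variation bound \eqref{2tfe} forces $\|\nu_{H_0,V}\|\le\frac12\tau_\Ic(|V|^2)=0$, and your argument just unpacks that same bound via Lemma~\ref{ir} and Theorem~\ref{2l4}, which are exactly the ingredients behind \eqref{2tfe}. No gap.
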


\begin{proof}
The result is an immediate consequence of the estimate \eqref{2tfe} for the total variation of the measure $\nu_{H_0,V}$.
\end{proof}

As in the case of a normal trace, the measure $\nu_{H_0,V}$ is nonnegative whenever $H_0$ and $V$ are bounded self-adjoint operators.

\begin{prop} Assume Hypotheses \ref{ideal} and \ref{root}.
Assume $H_0=H_0^*\in\mathcal{B}$ and $V=V^*\in\Ic^{1/2}$.
Then Theorem~\ref{2mt} (with Hypotheses~\ref{2cases}(i)) holds with the measure
$\nu_{H_0,V}$ on $\Reals$ nonnegative.
\end{prop}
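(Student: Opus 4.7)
The plan is to realize $\nu_{H_0,V}$ as a weak-$*$ limit of explicit nonnegative Borel measures constructed from the discretization in the proof of Theorem~\ref{2l4}. Since $H_0=H_0^*$ and $V=V^*\in\Ic^{1/2}\subseteq\Bc$ are bounded, both $H_0$ and $H_0+V$ are dissipative, so Hypotheses~\ref{2cases}(i) apply. The driving positivity is this: for any Borel sets $\delta_0,\delta_1\subseteq\Reals$ and any $t\in[0,1]$, setting $H_t := H_0+tV$ and $A := E_{H_t}(\delta_0)VE_{H_t}(\delta_1)$, self-adjointness of $V$ gives $A^* = E_{H_t}(\delta_1)VE_{H_t}(\delta_0)$, so using cyclicity of $\tau_\Ic$ together with $E_{H_t}(\delta_0)^2 = E_{H_t}(\delta_0)$,
\[
\tau_\Ic\bigl(E_{H_t}(\delta_0)VE_{H_t}(\delta_1)V\bigr) = \tau_\Ic(AA^*) \geq 0.
\]

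Next, I would fix a bounded interval $\Omega_0$ containing $\bigcup_{t\in[0,1]}\sigma(H_t)$ and, following the proof of Theorem~\ref{2l4}(Case~1), choose Borel partitions $\{\delta_i^{(n)}\}_{i=1}^n$ of $\Omega_0$ and sample points $\lambda_i^{(n)}\in\delta_i^{(n)}$ with mesh tending to $0$, such that for every $f\in\mathcal{F}$ and $t\in[0,1]$,
\[
\tau_\Ic\left(\frac{d^2}{ds^2}\bigg|_{s=t}f(H_0+sV)\right) = 2\lim_{n\to\infty}\sum_{i_0,i_1=1}^n f^{[2]}\bigl(\lambda_{i_0}^{(n)},\lambda_{i_1}^{(n)},\lambda_{i_0}^{(n)}\bigr)\,\rho_{t,i_0,i_1}^{(n)},
\]
where $\rho_{t,i_0,i_1}^{(n)} := \tau_\Ic\bigl(E_{H_t}(\delta_{i_0}^{(n)})VE_{H_t}(\delta_{i_1}^{(n)})V\bigr)\geq 0$; a finite telescoping using $\sum_i E_{H_t}(\delta_i^{(n)}) = I$ yields $\sum_{i_0,i_1}\rho_{t,i_0,i_1}^{(n)} = \tau_\Ic(V^2) = \tau_\Ic(|V|^2)$. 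Using the Peano kernel representation $f^{[2]}(\lambda_0,\lambda_1,\lambda_0) = \int_0^1(1-s)f''\bigl((1-s)\lambda_0 + s\lambda_1\bigr)\,ds$, I define the nonnegative Borel measure on $\Omega_0$
\[
\nu^{(n)}(d\lambda) := 2\int_0^1(1-t)\sum_{i_0,i_1}\rho_{t,i_0,i_1}^{(n)}\biggl[\int_0^1(1-s)\,\delta_{(1-s)\lambda_{i_0}^{(n)}+s\lambda_{i_1}^{(n)}}(d\lambda)\,ds\biggr]\,dt,
\]
which has total variation at most $\tfrac12\tau_\Ic(|V|^2)$, uniformly in $n$. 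Combining Lemma~\ref{ir} with the displayed limit identity and the dominated convergence theorem (with uniform bound $\tfrac12\|f''\|_\infty\tau_\Ic(|V|^2)$ from \eqref{ddd}) then gives
\[
\int_{\Omega_0} f''\,d\nu^{(n)} \longrightarrow \tau_\Ic\bigl(R_{H_0,V}(f)\bigr), \qquad (f\in\mathcal{F}).
\]
Since $C(\Omega_0)$ is separable, Banach--Alaoglu supplies a weakly-$*$ convergent subsequence $\nu^{(n_k)}\to\nu_{H_0,V}$, with limit a nonnegative finite Borel measure on $\Omega_0$. Then $\int f''\,d\nu_{H_0,V} = \tau_\Ic(R_{H_0,V}(f))$ for every $f\in\mathcal{F}$, realizing Theorem~\ref{2mt} with $\nu_{H_0,V}$ nonnegative (extended by $0$ outside $\Omega_0$).

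The main technical obstacle is justifying measurability in $t\in[0,1]$ of the maps $t\mapsto\rho_{t,i_0,i_1}^{(n)}$, which is needed for Fubini and for the dominated convergence step. This would be addressed by approximating each spectral indicator $\chi_{\delta_i^{(n)}}(H_t)$ by a uniformly bounded sequence of continuous functions of $H_t$, which vary norm-continuously in $t$, and then invoking $\|\cdot\|_\Ic$-boundedness of $\tau_\Ic$ to pass to the limit. The pointwise-in-$t$ validity of the representation from Theorem~\ref{2l4} is ensured by choosing a common partition of $\Omega_0$ of small enough mesh so that the Riemann-type sums $\sum_i(z-\lambda_i^{(n)})^{-m}E_{H_t}(\delta_i^{(n)})\to(zI-H_t)^{-m}$ converge in operator norm uniformly in $t\in[0,1]$, which works because $\lambda\mapsto(z-\lambda)^{-m}$ is uniformly continuous on the bounded set $\Omega_0$.
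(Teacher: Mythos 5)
Your positivity observation is exactly the right one, and it is the same as the paper's: with $A=E_{H_t}(\delta_0)VE_{H_t}(\delta_1)$ one has $\tau_\Ic\bigl(E_{H_t}(\delta_0)VE_{H_t}(\delta_1)V\bigr)=\tau_\Ic(AA^*)\ge0$; also your Hermite--Genocchi identity for $f^{[2]}(\la_0,\la_1,\la_0)$ is correct. The gap lies in the route through explicit approximating measures: the very definition of $\nu^{(n)}$, and the Fubini and dominated convergence steps that follow, require the maps $t\mapsto\rho^{(n)}_{t,i_0,i_1}=\tau_\Ic\bigl(E_{H_t}(\delta_{i_0})VE_{H_t}(\delta_{i_1})V\bigr)$ to be Borel measurable on $[0,1]$, and your proposed justification of this fails for singular traces. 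Approximating $\chi_{\delta}(H_t)$ by continuous functions $g_m(H_t)$ gives convergence of $g_m(H_t)$ to $E_{H_t}(\delta)$ only in the strong operator topology (and of $g_m(H_t)V$ to $E_{H_t}(\delta)V$ at best in operator norm), and $\|\cdot\|_\Ic$--boundedness of $\tau_\Ic$ cannot upgrade this to convergence of the traces: one would need $\bigl\|\bigl(g_m(H_t)-E_{H_t}(\delta)\bigr)VE_{H_t}(\delta')V\bigr\|_\Ic\to0$, which is false in general. Concretely, for $\Ic=\Lc^{(1,\infty)}$, $\tau_\Ic=\Tr_\omega$, $V=\diag\bigl((n^{-1/2})_{n\ge1}\bigr)\in\Ic^{1/2}$ and $P_N$ the projection onto the basis vectors of index at least $N$, one has $P_N\to0$ strongly while $\Tr_\omega(VP_NV)=\Tr_\omega\bigl(\diag(0,\dots,0,\frac1N,\frac1{N+1},\dots)\bigr)=1$ for every $N$; so the pointwise-in-$t$ limits you want (and hence the Baire-class-one measurability argument) break down. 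This is precisely the failure of normality highlighted at the start of Section~\ref{sec3} (cf.\ Example~\ref{ex}) as the reason why constructions of explicit spectral shift measures via $t$--integrals of set functions $\tau_\Ic\bigl(E_{H_t}(\cdot)V\cdots\bigr)$ do not transfer from the normal trace to Dixmier-type traces.

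The extra machinery is also unnecessary. Since $\nu_{H_0,V}$ in Theorem~\ref{2mt} arises from the Riesz representation of the bounded functional $f''\mapsto\tau_\Ic\bigl(R_{H_0,V}(f)\bigr)$, it suffices to show this functional is positive, because a positive bounded functional on $C(\Omega_0)$ is represented by a nonnegative measure. That is what the paper does: by Lemma~\ref{ir}, $\tau_\Ic\bigl(R_{H_0,V}(f)\bigr)=\int_0^1(1-t)\,\tau_\Ic\bigl(\frac{d^2}{dt^2}f(H_0+tV)\bigr)\,dt$, where the integrand is continuous in $t$ (so no measurability question arises), and the discretization from the proof of Theorem~\ref{2l4} is invoked only \emph{pointwise in $t$}, together with your positivity of $\tau_\Ic\bigl(E_{H_t}(\delta_{i_0})VE_{H_t}(\delta_{i_1})V\bigr)$ and the fact that $f^{[2]}(\la_0,\la_1,\la_0)\ge0$ when $f''\ge0$ on a segment containing the spectra, to conclude that the integrand is nonnegative for each fixed $t$. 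This yields $\tau_\Ic\bigl(R_{H_0,V}(f)\bigr)\ge0$ whenever $f''\ge0$, and nonnegativity of the representing measure follows directly, with no approximating measures, Peano-kernel disintegration, Banach--Alaoglu compactness, or $t$--measurability of spectral set functions needed. If you want to keep your construction, you must either prove the required measurability by some other means or restructure the argument so that, as in the paper, the discretization is used only to determine the sign of a quantity already known to be integrable in $t$.
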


\begin{proof}
In the course of the proofs of Theorems \ref{2mt} and \ref{2l4} (which we apply after rescaling the operators
to get contractions), we have established that for $f$ a polynomial,
\begin{align*}
&\tau_\Ic\left(f(H_0+V)-f(H_0)-\frac{d}{dt}\bigg|_{t=0}f(H_0+tV)\right)\\
&\quad=2\int_0^1 (1-t)\lim_{n\rightarrow\infty}\sum_{i_0=1}^n\sum_{i_1=1}^n f^{[2]}\big(\la_{i_0}^{(n)},\la_{i_1}^{(n)},\la_{i_0}^{(n)}\big)
\,\tau_\Ic\left(E_{H_t}\big(\delta_{i_0}^{(n)}\big)V E_{H_t}\big(\delta_{i_1}^{(n)}\big)V\right)\,dt.
\end{align*}
Denote $E_t(\la_k)=E_{H_0+tV}(d\la_k)$, for $k=0,1$.
Since
\begin{align*}
\left<E_t(\la_0)VE_t(\la_1)VE_t(\la_0)h,h\right>&=\left<E_t(\la_1)VE_t(\la_0)h,VE_t(\la_0)h\right>\geq 0,
\end{align*}
for any $h\in\Hcal$, we see that the set functions
$\tau_\Ic\big(E_t(\la_0)V E_t(\la_1)V\big)
$ are nonnegative. Therefore, $\tau_\Ic\left(f(H_0+V)-f(H_0)-\frac{d}{dt}\big|_{t=0}f(H_0+tV)\right)\geq 0$ whenever $f''\geq 0$ (on a segment containing the spectra of operators $H_0+tV$, $t\in[0,1]$). Finally, application of \eqref{2tf} completes the proof.
\end{proof}

\begin{bibdiv}
\begin{biblist}

\bib{Azamov}{article}{
   author={Azamov, N. A.},
   author={Carey, A. L.},
   author={Dodds, P. G.},
   author={Sukochev, F. A.},
   title={Operator integrals, spectral shift, and spectral flow},
   journal={Canad. J. Math.},
   volume={61},
   date={2009},
   number={2},
   pages={241--263},
}
		
\bib{sf}{article}{
   author={Azamov, N. A.},
   author={Carey, A. L.},
   author={Sukochev, F. A.},
   title={The spectral shift function and spectral flow},
   journal={Comm. Math. Phys.},
   volume={276},
   date={2007},
   number={1},
   pages={51--91},
}
		
\bib{AzamovIE}{article}{
   author={Azamov, N. A.},
   author={Dodds, P. G.},
   author={Sukochev, F. A.},
   title={The Krein spectral shift function in semifinite von Neumann algebras},
   journal={Integral Equations Operator Theory},
   volume={55},
   date={2006},
   number={3},
   pages={347--362},
}

\bib{BS}{article}{
   author={Birman, M. {\v{S}}.},
   author={Solomjak, M. Z.},
   title={Remarks on the spectral shift function},
   language={Russian},
   journal={Zap. Nau\v cn. Sem. Leningrad. Otdel. Mat. Inst. Steklov. (LOMI)},
   volume={27},
   date={1972},
   pages={33--46},
   translation={
     journal={J. Soviet Math.},
    volume= {3, 4},
    year={1975},
    pages={408--419}
   }
}

\bib{Calkin}{article}{
   author={Calkin, J. W.},
   title={Two-sided ideals and congruences in the ring of bounded operators
   in Hilbert space},
   journal={Ann. of Math. (2)},
   volume={42},
   date={1941},
   pages={839--873},
}

\bib{CS}{article}{
   author={Carey, A. L.},
   author={Sukochev, F. A.},
   title={Dixmier traces and some applications to noncommutative geometry},
   language={Russian},
   journal={Uspekhi Mat. Nauk},
   volume={61},
   date={2006},
   number={6(372)},
   pages={45--110},
   translation={
      journal={Russian Math. Surveys},
      volume={61},
      date={2006},
     number={6},
      pages={1039--1099},
   },
}

\bib{CP}{article}{
   author={Carey, Richard W.},
   author={Pincus, Joel D.},
   title={Mosaics, principal functions, and mean motion in von Neumann
   algebras},
   journal={Acta Math.},
   volume={138},
   date={1977},
   number={3-4},
   pages={153--218},
}

\bib{Connes}{book}{
   author={Connes, Alain},
   title={Noncommutative geometry},
   publisher={Academic Press Inc.},
   place={San Diego, CA},
   date={1994},
   pages={xiv+661},
}

\bib{Dix}{article}{
   author={Dixmier, Jacques},
   title={Existence de traces non normales},
   journal={C. R. Acad. Sci. Paris S\'er. A-B},
   volume={262},
   date={1966},
   pages={A1107--A1108},
}

\bib{DPSSS}{article}{
   author={Dodds, P. G.},
   author={de Pagter, B.},
   author={Sedaev, A. A.},
   author={Semenov, E. M.},
   author={Sukochev, F. A.},
   title={Singular symmetric functionals and Banach limits with additional invariance properties},
   language={Russian},
   journal={Izv. Ross. Akad. Nauk Ser. Mat.},
   volume={67},
   date={2003},
   number={6},
   pages={111--136},
   translation={
      journal={Izv. Math.},
      volume={67},
      date={2003},
      number={6},
      pages={1187--1212},
   },
}

\bib{DFWW}{article}{
   author={Dykema, Ken},
   author={Figiel, Tadeusz},
   author={Weiss, Gary},
   author={Wodzicki, Mariusz},
   title={Commutator structure of operator ideals},
   journal={Adv. Math.},
   volume={185},
   date={2004},
   number={1},
   pages={1--79},
}

\bib{DK}{article}{
   author={Dykema, Kenneth},
   author={Kalton, Nigel J.},
   title={Sums of commutators in ideals and modules of type II factors},
   journal={Ann. Inst. Fourier (Grenoble)},
   volume={55},
   date={2005},
   number={3},
   pages={931--971},
}

\bib{DS}{article}{
  author={Dykema, Ken},
  author={Skripka, Anna},
  title={Higher order spectral shift},
  journal={J. Funct. Anal.},
  volume={257},
  date={2009},
  number={4},
  pages={1092--1132},
}

\bib{F}{article}{
   author={Fack, Thierry},
   title={Sur la notion de valeur caract\'eristique},
   journal={J. Operator Theory},
   volume={7},
   date={1982},
   number={2},
   pages={307--333},
}

\bib{FK}{article}{
   author={Fack, Thierry},
   author={Kosaki, Hideki},
   title={Generalized $s$-numbers of $\tau$-measurable operators},
   journal={Pacific J. Math.},
   volume={123},
   date={1986},
   number={2},
   pages={269--300},
}
		
\bib{GK}{book}{
   author={Gohberg, I. C.},
   author={Kre{\u\i}n, M. G.},
   title={Introduction to the theory of linear nonselfadjoint operators},
   series={Translated from the Russian by A. Feinstein. Translations of Mathematical Monographs, Vol. 18},
   publisher={American Mathematical Society},
   place={Providence, R.I.},
   date={1969},
}

\bib{GI}{article}{
   author={Guido, Daniele},
   author={Isola, Tommaso},
   title={Singular traces on semifinite von Neumann algebras},
   journal={J. Funct. Anal.},
   volume={134},
   date={1995},
   number={2},
   pages={451--485},
}

\bib{Koplienko}{article}{
   author={Koplienko, L. S.},
   title={The trace formula for perturbations of nonnuclear type},
   language={Russian},
   journal={Sibirsk. Mat. Zh.},
   volume={25},
   date={1984},
   number={5},
   pages={62--71},
   translation={
     journal={Siberian Math. J.},
     volume={25},
     year={1984},
     pages={735--743}
   }
}

\bib{Krein}{article}{
   author={Kre{\u\i}n, M. G.},
   title={On the trace formula in perturbation theory},
   language={Russian},
   journal={Mat. Sbornik N.S.},
   volume={33(75)},
   date={1953},
   pages={597--626},
}

\bib{Kreinunitary}{article}{
   author={Kre{\u\i}n, M. G.},
   title={On perturbation determinants and a trace formula for unitary and
   self-adjoint operators},
   language={Russian},
   journal={Dokl. Akad. Nauk SSSR},
   volume={144},
   date={1962},
   pages={268--271},
}

\bib{Lifshits}{article}{
   author={Lif{\v{s}}ic, I. M.},
   title={On a problem of the theory of perturbations connected with quantum statistics},
   language={Russian},
   journal={Uspehi Matem. Nauk (N.S.)},
   volume={7},
   date={1952},
   number={1(47)},
   pages={171--180},
}

\bib{LSZ}{book}{
   author={Lord, Stephen},
   author={Sukochev, Fedor},
   author={Zanin, Dmitriy},
   title={Singular Traces},
   series={de Gruyter Studies in Mathematics},
   volume={46},
   publisher={Walter de Gruyter \& Co.},
   place={Berlin},
   date={2012},
}

\bib{MvN}{article}{
   author={Murray, F. J.},
   author={von Neumann, J.},
   title={On rings of operators},
   journal={Ann. of Math. (2)},
   volume={37},
   date={1936},
   number={1},
   pages={116--229},
}

\bib{Neidhardt}{article}{
   author={Neidhardt, Hagen},
   title={Spectral shift function and Hilbert-Schmidt perturbation:
   extensions of some work of L. S. Koplienko},
   journal={Math. Nachr.},
   volume={138},
   date={1988},
   pages={7--25},
}

\bib{PSS}{article}{
  author={Potapov, Denis},
  author={Skripka, Anna},
  author={Sukochev, Fedor},
  title={Spectral shift function of higher order},
  journal={Invent. Math.},
  eprint={http://arxiv.org/abs/0912.3056},
  doi={10.1007/s00222-012-0431-2},
}

\bib{PSS-circle}{article}{
  author={Potapov, Denis},
  author={Skripka, Anna},
  author={Sukochev, Fedor},
  title={Spectral shift function of higher order for contractions},
  journal={Proc. London Math. Soc.},
  status={to appear},
  eprint={http://arxiv.org/abs/1210.8227}
}

\bib{PS}{article}{
   author={Potapov, Denis},
   author={Sukochev, Fedor},
   title={Koplienko spectral shift function on the unit circle},
   journal={Comm. Math. Phys.},
   volume={309},
   date={2012},
   number={3},
   pages={693--702},
}

\bib{RPKF}{article}{
   author={Rubtsova, O. A.},
   author={Pomerantsev, V. N.},
   author={Kukulin, V. I.},
   author={Faessler, Armand},
   title={New approach toward a direct evaluation of the multichannel multienergy S matrix without solving the scattering equations},
   journal={Phys. Rev. C},
   volume={81},
   date={2010},
   pages={064003},
}

\bib{SZ}{article}{
  author={Sukochev, Fedor},
  author={Zanin, Dmitriy},
  title={Traces on symmetrically normed operator ideals},
  journal={J. reine angew. Math.},
  status={to appear},
  eprint={arxiv.org/abs/1108.2598}
}

\bib{SNF}{book}{
   author={Sz.-Nagy, B{\'e}la},
   author={Foias, Ciprian},
   author={Bercovici, Hari},
   author={K{\'e}rchy, L{\'a}szl{\'o}},
   title={Harmonic analysis of operators on Hilbert space},
   series={Universitext},
   edition={Revised and enlarged edition},
   publisher={Springer},
   place={New York},
   date={2010},
}

\bib{Y}{book}{
   author={Yafaev, D. R.},
   title={Mathematical scattering theory},
   series={Translations of Mathematical Monographs},
   volume={105},
   note={General theory;
   Translated from the Russian by J. R. Schulenberger},
   publisher={American Mathematical Society},
   place={Providence, RI},
   date={1992},
}

\end{biblist}
\end{bibdiv}

\end{document}